\documentclass[11pt, oneside]{amsart}       
\usepackage{geometry}                       
\geometry{letterpaper}                          
\usepackage[parfill]{parskip}           
\usepackage{graphicx}
\usepackage{color}
\usepackage{amssymb,amsmath}
\newtheorem{theorem}{Theorem}
\newtheorem{lemma}[theorem]{Lemma}

\theoremstyle{definition}
\newtheorem{remark}[theorem]{Remark}
\newtheorem{example}[theorem]{Example}
\newtheorem{problem}[theorem]{Problem}
\newtheorem{definition}[theorem]{Definition}
\usepackage{epstopdf}
\numberwithin{equation}{section}
\numberwithin{theorem}{section}

\author{P.D.~Dragnev}
\address{Department of Mathematical Sciences, Indiana University--Purdue University Fort Wayne, Fort Wayne, IN 46805,
          USA}
          \email{dragnevp@ipfw.edu}

\author{B.~Fuglede}
\address{Department of Mathematical Sciences, University of Copenhagen, Universitetsparken 5,
2100 Copenhagen, Denmark}
\email{fuglede@math.ku.dk}

\author{D.P.~Hardin}
\address{Center for Constructive Approximation, Department of Mathematics,
Vanderbilt University,
Nashville, TN 37240, USA}
\email{doug.hardin@vanderbilt.edu}

\author{E.B.~Saff}
\address{Center for Constructive Approximation, Department of Mathematics,
Vanderbilt University,
Nashville, TN 37240, USA}
\email{edward.b.saff@vanderbilt.edu}

\author{N.~Zorii}
\address{Institute of Mathematics of
National Academy of Sciences of Ukraine, Tereshchenkivska 3, 01601,
Kyiv-4, Ukraine}
\email{natalia.zorii@gmail.com}

\thanks{The research of the first author was supported, in part, by a Simons Foundation grant no.\ 282207.
The research
of the third and the fourth authors was supported, in part, by the U.\ S.\ National Science Foundation under
grants DMS-1516400. The research of the fifth author was supported, in part, by the Scholar-in-Residence
program at IPFW and by the Department of Mathematical Sciences of the University of Copenhagen.}

\begin{document}

\title[Condensers with touching plates]{Condensers with touching plates and constrained minimum Riesz and
Green energy problems}

\begin{abstract}
We study minimum energy problems relative to the $\alpha$-Riesz
kernel $|x-y|^{\alpha-n}$, $\alpha\in(0,2]$, over signed Radon
measures $\mu$ on $\mathbb R^n$, $n\geqslant3$, associated with a
generalized condenser $(A_1,A_2)$, where $A_1$ is a relatively
closed subset of a domain $D$ and $A_2=\mathbb R^n\setminus D$. We
show that, though $A_2\cap\mathrm{Cl}_{\mathbb R^n}A_1$ may have
nonzero capacity, this minimum energy problem is uniquely solvable
(even in the presence of an external field) if we restrict ourselves
to $\mu$ with $\mu^+\leqslant\xi$, where a constraint $\xi$ is
properly chosen. We establish the sharpness of the sufficient
conditions on the solvability thus obtained, provide descriptions of
the weighted $\alpha$-Riesz potentials of the solutions, single out
their characteristic properties, and analyze their supports. The
approach developed is mainly based on the establishment of an
intimate relationship between the constrained minimum $\alpha$-Riesz
energy problem over signed measures associated with $(A_1,A_2)$ and
the constrained minimum $\alpha$-Green energy problem over positive
measures carried by $A_1$. The results are illustrated by examples.
\end{abstract}

\renewcommand{\thefootnote}{}

\footnote{2010 \emph{Mathematics Subject Classification}: 31C15.}

\footnote{\emph{Key words and phrases}: Constrained minimum energy problems, $\alpha$-Riesz kernels,
$\alpha$-Green kernels, external fields,
condensers with touching plates.}

\renewcommand{\thefootnote}{\arabic{footnote}}
\setcounter{footnote}{0}

\maketitle

\section{Introduction}

The purpose of this paper is to study minimum energy problems with
external fields (also known in the literature as {\it weighted
minimum energy problems\/}) relative to the $\alpha$-Riesz kernel
$\kappa_\alpha(x,y):=|x-y|^{\alpha-n}$ of order $\alpha\in(0,2]$ on
$\mathbb R^n$, $n\geqslant3$, where $|x-y|$ is the Euclidean
distance between $x,y\in\mathbb R^n$ and infimum is taken over
classes of (signed) Radon measures $\mu$ on $\mathbb R^n$ associated
with a generalized condenser $\mathbf A=(A_1,A_2)$. More precisely,
an ordered pair $\mathbf A=(A_1,A_2)$ is termed a {\it generalized
condenser\/} in $\mathbb R^n$ if $A_1$ is a relatively closed subset
of a given (connected open) domain $D\subset\mathbb R^n$ and
$A_2=D^c:=\mathbb R^n\setminus D$, while $\mu$ is said to be {\it
associated with\/} $\mathbf A$ if the positive and negative parts in
the Hahn--Jor\-dan decomposition of $\mu$ are carried by $A_1$ and
$A_2$, respectively.

Note that, although $A_1\cap A_2=\varnothing$, the set
$A_2\cap\mathrm{Cl}_{\mathbb R^n}A_1$ may have nonzero (in
particular infinite, see Example~\ref{ex-3} below) $\alpha$-Riesz
capacity and may even coincide with the boundary of $D$ relative to
$\mathbb R^n$. Therefore the classical {\it condenser problem\/} for
the generalized condenser $\mathbf A$, which amounts to the minimum
$\alpha$-Riesz energy problem over the class of all $\mu$ associated
with $\mathbf A$ and normalized by $\mu^+(A_1)=\mu^-(A_2)=1$, can
easily be shown to have {\it no\/} solution, see
Theorem~\ref{pr1uns}. Using the electrostatic interpretation, which
is possible for the Coulomb kernel $|x-y|^{-1}$ on $\mathbb R^3$, in
the case where a minimum energy problem has no solution we say that
a short-circuit occurs between the oppositely charged plates of the
generalized condenser $\mathbf A$. It is therefore meaningful to ask
what kinds of additional requirements on the objects in question
will prevent this blow-up effect, and secure that a solution to the
corresponding minimum $\alpha$-Riesz energy problem does exist.

We show that a solution $\lambda_{\mathbf A}^\xi$ to the minimum
$\alpha$-Riesz energy problem exists (no short-circ\-uit occurs) if
we restrict ourselves to $\mu$ with $\mu^+\leqslant\xi$, where the
constraint $\xi$ is properly chosen. More precisely, if $A_2=D^c$ is
not $\alpha$-thin at infinity, then such $\lambda_{\mathbf A}^\xi$
exists (even in the presence of an external field) provided that
$\xi$ is a positive Radon measure carried by $A_1$ with finite
$\alpha$-Riesz energy
$E_{\kappa_\alpha}(\xi):=\int\kappa_\alpha(x,y)\,d(\xi\otimes\xi)(x,y)<\infty$
and with total mass $\xi(A_1)\in(1,\infty)$; see
Theorem~\ref{th-suff}.\footnote{See Section~\ref{sec:RG} for the
notion of $\alpha$-thinness at infinity. The uniqueness of a
solution $\lambda_{\mathbf A}^\xi$ can be established by standard
methods based on the convexity of the class of admissible measures
and the pre-Hilbert structure on the linear space of all (signed)
Radon measures on $\mathbb R^n$ with
$E_{\kappa_\alpha}(\mu)<\infty$, see Lemma~\ref{l:uniq}.} In
particular, if the domain $D$ is bounded, then a solution
$\lambda_{\mathbf A}^\xi$ exists whenever $\mathbf A=(D,D^c)$,
$m_n(D)>1$ and $\xi=m_n|_D$, where $m_n$ is the $n$-dimen\-sional
Lebesgue measure on $\mathbb R^n$. Theorem~\ref{th-suff} is sharp in
the sense that it no longer holds if the requirement
$\xi(A_1)<\infty$ is omitted from its hypotheses, see
Theorem~\ref{th-unsuff}.

We provide descriptions of the weighted $\alpha$-Riesz potentials of the solutions $\lambda_{\mathbf A}^\xi$,
single out their characteristic properties, and analyze their supports, see Theorems~\ref{desc-pot},
\ref{desc-sup} and \ref{zone}. The results are illustrated by Examples~\ref{ex-2} and~\ref{ex-3}.
The theory of weighted minimum $\alpha$-Riesz energy problems with a (positive) constraint $\xi$ acting only
on positive parts of measures associated with $\mathbf A$, thus developed, remains valid in its full
generality for the signed constraint $\xi-\xi^{D^c}$ acting simultaneously on the positive and negative parts
of the measures in question, see Section~\ref{entire}. (Here $\xi^{D^c}$ is the $\alpha$-Riesz balayage
of $\xi$ onto~$D^c$.)

The approach developed is mainly based on the establishment of an
intimate relationship between, on the one hand, the constrained
weighted minimum $\alpha$-Riesz energy problem over (signed)
measures associated with $\mathbf A$ and, on the other hand, the
constrained weighted minimum $\alpha$-Green energy problem over
positive measures carried by $A_1$ (Theorem~\ref{th:rel}). The proof
of Theorem~\ref{th:rel} uses substantially the required finiteness
of $E_{\kappa_\alpha}(\xi)$. Regrettably, a similar assertion in
\cite{DFHSZ}, Lemma~4.2, did not require that
$E_{\kappa_\alpha}(\xi)<\infty$, being based on a false statement,
Lemma~2.4, that the finiteness of the $\alpha$-Green energy
$E_g(\mu)$ of a bounded measure $\mu$ on $D$ implies the finiteness
of its $\alpha$-Riesz energy (see Example~\ref{counterex} below for
a counterexample). This caused the incorrectness of some of the
formulations and proofs presented in \cite{DFHSZ}. The present paper
rectifies the results on the constrained weighted $\alpha$-Riesz and
$\alpha$-Green energy problems announced in~\cite{DFHSZ}.

Regarding the constrained weighted minimum $\alpha$-Green energy
problem over positive measures carried by $A_1$, crucial to the
arguments applied in the investigation thereof is the perfectness of
the $\alpha$-Green kernel $g$ on a domain $D$, established recently
by the second and the fifth named authors \cite{FZ}, which amounts
to the completeness of the cone of all positive Radon measures $\nu$
on $D$ with finite $\alpha$-Green energy $E_g(\nu)$ in the topology
determined by the energy norm $\|\nu\|_g:=\sqrt{E_g(\nu)}$.

\section{Preliminaries}\label{sec:princ}

Let $X$ be a locally compact (Hausdorff) space \cite[Chapter~I,
Section~9, n$^\circ$\,7]{B1}, to be specified below, and $\mathfrak
M(X)$ the linear space of all real-valued (signed) Radon measures
$\mu$ on $X$, equipped with the {\it vague\/} topology, i.e.\ the
topology of pointwise convergence on the class $C_0(X)$ of all
continuous functions on $X$ with compact support.\footnote{When
speaking of a continuous numerical function we understand that the
values are {\it finite\/} real numbers.} We refer to \cite{B2,E2}
for the theory of measures and integration on a locally compact
space, to be used throughout the paper; see also \cite{F1} for a
short survey.

For the purposes of the present study it is enough to assume that
$X$ is metrizable and {\it countable at infinity\/}, where the
latter means that $X$ can be represented as a countable union of
compact sets \cite[Chapter~I, Section~9, n$^\circ$\,9]{B1}. Then the
vague topology on $\mathfrak M(X)$ satisfies the first axiom of
countability \cite[Remark~2.5]{DFHSZ2}, and the vague convergence is
entirely determined by convergence of sequences. The vague topology
on $\mathfrak M(X)$ is Hausdorff, and hence a vague limit of any
sequence in $\mathfrak M(X)$ is unique (provided that it exists).

Let $\mu^+$ and $\mu^-$ denote the positive and negative parts of a
measure $\mu\in\mathfrak M(X)$ in the Hahn--Jor\-dan decomposition,
$|\mu|:=\mu^++\mu^-$ its total variation, and $S(\mu)=S^\mu_{X}$ its
support. A measure $\mu$ is said to be {\it bounded\/} if
$|\mu|(X)<\infty$. Given $\mu\in\mathfrak M(X)$ and a
$|\mu|$-meas\-ur\-able function $u:X\to[-\infty,\infty]$, we shall
for brevity write $\langle u,\mu\rangle:=\int
u\,d\mu$.\footnote{When introducing notation about numerical
quantities we always assume the corresponding object on the right to
be well defined (as a finite real number or~$\pm\infty$).}

Let $\mathfrak M^+(X)$ stand for the (convex, vaguely closed) cone of all positive $\mu\in\mathfrak M(X)$,
and let $\Psi(X)$ consist of all lower semicontinuous (l.s.c.)
functions $\psi:X\to(-\infty,\infty]$, nonnegative unless $X$ is compact. The following fact is well known,
see e.g.\ \cite[Section~1.1]{F1}.

\begin{lemma}\label{lemma-semi}For any\/ $\psi\in\Psi(X)$ the mapping\/ $\mu\mapsto\langle\psi,\mu\rangle$
is vaguely l.s.c.\ on\/~$\mathfrak M^+(X)$.\end{lemma}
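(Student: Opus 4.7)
The plan is to express $\psi$ as an upper envelope of continuous, compactly supported functions and then use that the vague topology is, by definition, the topology of pointwise convergence on $C_0(X)$, so each pairing $\mu\mapsto\langle f,\mu\rangle$ with $f\in C_0(X)$ is vaguely continuous; a supremum of continuous functions is l.s.c.

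First, I would reduce to the case $\psi\geqslant 0$. If $X$ is non-compact this is already assumed. If $X$ is compact, then $\psi$ is l.s.c.\ on a compact set, hence bounded below by some constant $c\in\mathbb R$, and $\widetilde\psi:=\psi-c\geqslant 0$ is again l.s.c. Moreover, when $X$ is compact, the constant function $1$ belongs to $C_0(X)=C(X)$, so $\mu\mapsto\mu(X)=\langle 1,\mu\rangle$ is vaguely continuous; therefore $\mu\mapsto\langle\psi,\mu\rangle=\langle\widetilde\psi,\mu\rangle+c\,\mu(X)$ is vaguely l.s.c.\ on $\mathfrak M^+(X)$ as soon as $\mu\mapsto\langle\widetilde\psi,\mu\rangle$ is. Thus it suffices to treat nonnegative $\psi\in\Psi(X)$.

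Next, I would invoke the standard approximation on a locally compact Hausdorff space: for any nonnegative l.s.c.\ function $\psi:X\to[0,\infty]$ one has
\[
  \psi(x)=\sup\bigl\{f(x):f\in C_0^+(X),\ f\leqslant\psi\bigr\}\qquad(x\in X),
\]
and the family $\mathcal F:=\{f\in C_0^+(X):f\leqslant\psi\}$ is directed upward (the pointwise maximum of two elements of $\mathcal F$ is dominated by a third element of $\mathcal F$ by Urysohn). Since the Radon integral of a nonnegative l.s.c.\ function with respect to $\mu\in\mathfrak M^+(X)$ is, by definition in the Bourbaki framework, the supremum of $\langle f,\mu\rangle$ over $f\in C_0^+(X)$ with $f\leqslant\psi$, we obtain
\[
  \langle\psi,\mu\rangle=\sup_{f\in\mathcal F}\langle f,\mu\rangle\qquad(\mu\in\mathfrak M^+(X)).
\]

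Finally, for each fixed $f\in C_0(X)$ the functional $\mu\mapsto\langle f,\mu\rangle$ is vaguely continuous by the very definition of the vague topology. A pointwise supremum of a family of continuous real-valued functions on a topological space is l.s.c., and therefore $\mu\mapsto\langle\psi,\mu\rangle$ is vaguely l.s.c.\ on $\mathfrak M^+(X)$, as desired. The only step requiring a little care is the identity $\langle\psi,\mu\rangle=\sup_{f\in\mathcal F}\langle f,\mu\rangle$; this is essentially the definition of the upper integral of an l.s.c.\ function against a positive Radon measure, but if one prefers to argue from scratch one can invoke monotone convergence along an increasing sequence extracted from the directed family $\mathcal F$ (using that $X$ is countable at infinity).
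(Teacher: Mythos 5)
Your proof is correct and follows the standard route; the paper itself gives no proof of this lemma, simply citing Fuglede \cite[Section~1.1]{F1}, where the argument is essentially the same as yours (express a nonnegative l.s.c.\ $\psi$ as the upper envelope of the upward-directed family $\{f\in C_0^+(X):f\leqslant\psi\}$, note that the upper integral of $\psi$ against $\mu\in\mathfrak M^+(X)$ is by definition the supremum of the pairings $\langle f,\mu\rangle$ over this family, and conclude from vague continuity of each $\mu\mapsto\langle f,\mu\rangle$). Your preliminary reduction to nonnegative $\psi$ in the compact case, using that the constant $1$ lies in $C(X)=C_0(X)$ and that $\mu\mapsto\mu(X)$ is therefore vaguely continuous, is the right way to handle the signed case permitted by the definition of $\Psi(X)$ when $X$ is compact.
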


We define a {\it kernel\/} $\kappa$ on $X$ as a symmetric positive
function from $\Psi(X\times X)$. Given $\mu,\nu\in\mathfrak M(X)$,
let $E_\kappa(\mu,\nu)$ and $U_\kappa^\mu$ denote the {\it mutual
energy\/} and the {\it potential\/} relative to the kernel $\kappa$,
i.e.
\begin{align*}
E_\kappa(\mu,\nu)&:=\int\kappa(x,y)\,d(\mu\otimes\nu)(x,y),\\
U_\kappa^\mu(\cdot)&:=\int\kappa(\cdot,y)\,d\mu(y).
\end{align*}

Observe that $U_\kappa^\mu(x)$, $\mu\in\mathfrak M(X)$, is well defined at $x\in X$ provided that
$U_\kappa^{\mu^+}(x)$ or $U_\kappa^{\mu^-}(x)$ is finite, and then
$U_\kappa^\mu(x)=U_\kappa^{\mu^+}(x)-U_\kappa^{\mu^-}(x)$. In particular, if $\mu\geqslant0$, then
$U_\kappa^\mu$ is defined everywhere on $X$ and represents a positive l.s.c.\ function,
see Lemma~\ref{lemma-semi}.

Also note that $E_\kappa(\mu,\nu)$, $\mu,\nu\in\mathfrak M(X)$, is
well defined and equal to $E_\kappa(\nu,\mu)$ provided that
$E_\kappa(\mu^+,\nu^+)+E_\kappa(\mu^-,\nu^-)$ or
$E_\kappa(\mu^+,\nu^-)+E_\kappa(\mu^-,\nu^+)$ is finite. For
$\mu=\nu$ the mutual energy $E_\kappa(\mu,\nu)$ becomes the {\it
energy\/} $E_\kappa(\mu):=E_\kappa(\mu,\mu)$ of $\mu$. Let $\mathcal
E_\kappa(X)$ consist of all $\mu\in\mathfrak M(X)$ whose energy
$E_\kappa(\mu)$ is finite, which by definition means that the kernel
$\kappa$ is $(|\mu|\otimes|\mu|)$-int\-egr\-able, i.e.\
$E_\kappa(|\mu|)<\infty$, and let $\mathcal E^+_\kappa(X):=\mathcal
E_\kappa(X)\cap\mathfrak M^+(X)$.

If $f:X\to[-\infty,\infty]$ is an {\it external field\/}, then the $f$-{\it weighted potential\/}
$W^\mu_{\kappa,f}$ and the $f$-{\it weight\-ed energy\/} $G_{\kappa,f}(\mu)$ of $\mu\in\mathcal E_\kappa(X)$
are formally given by
\begin{align}\label{wp}W^\mu_{\kappa,f}&:=U_\kappa^\mu+f,\\
\label{we}G_{\kappa,f}(\mu)&:=E_\kappa(\mu)+2\langle f,\mu\rangle=
\langle W^\mu_{\kappa,f}+f,\mu\rangle.\end{align} Let $\mathcal
E_{\kappa,f}(X)$ consist of all $\mu\in\mathcal E_\kappa(X)$ whose
$f$-weighted energy $G_{\kappa,f}(\mu)$ is finite, or equivalently
such that $f$ is $|\mu|$-integrable.

Given a set $Q\subset X$, let $\mathfrak M^+(Q;X)$ consist of all
$\mu\in\mathfrak M^+(X)$ {\it carried by\/} $Q$, which means that
$X\setminus Q$ is locally $\mu$-neg\-lig\-ible, or equivalently that
$Q$ is $\mu$-meas\-ur\-able and $\mu=\mu|_Q$, where
$\mu|_Q=1_Q\cdot\mu$ is the trace (restriction) of $\mu$ on $Q$
\cite[Chapter~V, Section~5, n$^\circ$\,3, Example]{B2}. (Here $1_Q$
denotes the indicator function of $Q$.) If $Q$ is closed, then $\mu$
is carried by $Q$ if and only if it is supported by $Q$, i.e.\
$S(\mu)\subset Q$. It follows from the countability of $X$ at
infinity that the concept of local $\mu$-neg\-lig\-ibility coincides
with that of $\mu$-neg\-lig\-ibility; and hence $\mu\in\mathfrak
M^+(Q;X)$ if and only if $\mu^*(X\setminus Q)=0$, $\mu^*(\cdot)$
being the {\it outer measure\/} of a set. Denoting by $\mu_*(\cdot)$
the {\it inner measure\/} of a set, for any $\mu\in\mathfrak
M^+(Q;X)$ we thus get
\[\mu^*(Q)=\mu_*(Q)=:\mu(Q).\]
Write $\mathcal E_\kappa^+(Q;X):=\mathcal E_\kappa(X)\cap\mathfrak M^+(Q;X)$,
$\mathfrak M^+(Q,q;X):=\bigl\{\mu\in\mathfrak M^+(Q;X):\ \mu(Q)=q\bigr\}$ and
$\mathcal E_\kappa^+(Q,q;X):=\mathcal E_\kappa(X)\cap\mathfrak M^+(Q,q;X)$, where $q\in(0,\infty)$.

Assume for a moment that $Q$ is {\it locally closed\/} in $X$.
According to \cite[Chapter~I, Section~3, Definition~2]{B1}, this
means that for every $x\in Q$ there is a neighborhood $V$ of $x$ in
$X$ such that $V\cap Q$ is a closed subset of the subspace $Q\subset
X$. Being locally closed, the set $Q$ is universally measurable
\cite[Chapter~I, Section~3, Proposition~5]{B1}, and hence $\mathfrak
M^+(Q;X)$ consists of all the restrictions $\mu|_{Q}$, $\mu$ ranging
over $\mathfrak M^+(X)$. On the other hand, according to
\cite[Chapter~I, Section~9, Proposition~13]{B1} the locally closed
set $Q$ itself can be thought of as a locally compact subspace of
$X$. Thus $\mathfrak M^+(Q;X)$ consists, in fact, of all those
$\nu\in\mathfrak M^+(Q)$ for each of which there exists
$\widehat{\nu}\in\mathfrak M^+(X)$ with the property
\begin{equation}\label{extend}\widehat{\nu}(\varphi)=\langle\varphi|_Q,\nu\rangle\text{ \ for every\ }
\varphi\in C_0(X).\end{equation}
We say that such $\widehat{\nu}$ {\it extends\/} $\nu\in\mathfrak M^+(Q)$ by $0$ off $Q$ to all of $X$.
A sufficient condition for this to happen is that $\nu$ be bounded.

In all that follows a kernel $\kappa$ is assumed to be {\it strictly
positive definite\/}, which means that the energy $E_\kappa(\mu)$,
$\mu\in\mathfrak M(X)$, is nonnegative whenever defined and it
equals $0$ only for $\mu=0$. Then $\mathcal E_\kappa(X)$ forms a
pre-Hil\-bert space with the inner product $E_\kappa(\mu,\mu_1)$ and
the energy norm $\|\mu\|_\kappa:=\sqrt{E_\kappa(\mu)}$, see
\cite{F1}. The (Hausdorff) topology on $\mathcal E_\kappa(X)$
determined by the norm $\|\cdot\|_\kappa$ is termed {\it strong\/}.

In contrast to \cite{Fu4,Fu5} where a capacity has been treated as a
functional acting on positive numerical functions on $X$, in the
present study we use the (standard) concept of capacity as a set
function. Thus the ({\it inner\/}) {\it capacity\/} of a set
$Q\subset X$ relative to the kernel $\kappa$, denoted $c_\kappa(Q)$,
is defined by
\begin{equation}\label{cap-def}c_\kappa(Q):=\bigl[\inf_{\mu\in\mathcal
E_\kappa^+(Q,1;X)}\,E_\kappa(\mu)\bigr]^{-1},\end{equation}
see e.g.\ \cite{F1,O}. Then $0\leqslant c_\kappa(Q)\leqslant\infty$. (As usual, here and in the sequel the
infimum over the empty set is taken to be $+\infty$. We also put
$1\bigl/(+\infty)=0$ and $1\bigl/0=+\infty$.) In consequence of the strict positive definiteness of the
kernel $\kappa$,
\begin{equation}\label{compact-fin}c_\kappa(K)<\infty\text{ \ for every compact\ }K\subset X.\end{equation}
Furthermore, by \cite[p.~153]{F1},
\begin{equation}\label{compact}c_\kappa(Q)=\sup\,c_\kappa(K)\quad(K\subset Q, \ K\text{\ compact}).
\end{equation}

An assertion $\mathcal U(x)$ involving a variable point $x\in X$ is
said to hold $c_\kappa$-{\it near\-ly everywhere\/} ($c_\kappa$-{\it
n.e.\/}) on $Q$ if $c_\kappa(N)=0$, where $N$ consists of all $x\in
Q$ for which $\mathcal U(x)$ fails. Throughout the paper we shall
often use the fact that $c_\kappa(N)=0$ if and only if $\mu_*(N)=0$
for every $\mu\in\mathcal E_\kappa^+(X)$, see
\cite[Lemma~2.3.1]{F1}.

As in \cite[p.\ 134]{L}, we call a (signed Radon) measure
$\mu\in\mathfrak M(X)$ $c_\kappa${\it -ab\-sol\-ut\-ely
continuous\/} if $\mu(K)=0$ for every compact set $K\subset X$ with
$c_\kappa(K)=0$. It follows from (\ref{compact}) that for such
$\mu$, $|\mu|_*(Q)=0$ for every $Q\subset X$ with $c_\kappa(Q)=0$.
Hence, every $\mu\in\mathcal E_\kappa(X)$ is
$c_\kappa$-ab\-sol\-utely continuous; but not conversely, see
\cite[pp.~134--135]{L}.

\begin{definition}\label{def-perf}Following~\cite{F1}, we call a (strictly positive definite)
kernel $\kappa$ {\it perfect\/} if every strong Cauchy sequence in $\mathcal E_\kappa^+(X)$ converges
strongly to any of its vague cluster points.\footnote{It follows from Theorem~\ref{fu-complete} that
for a perfect kernel such a vague cluster point exists and is unique.}\end{definition}

\begin{remark}\label{rem:clas}On $X=\mathbb R^n$, $n\geqslant3$, the $\alpha$-Riesz kernel
$\kappa_\alpha(x,y)=|x-y|^{\alpha-n}$, $\alpha\in(0,n)$, is strictly
positive definite and moreover perfect \cite{D1,D2}, and hence so is
the Newtonian kernel $\kappa_2(x,y)=|x-y|^{2-n}$ \cite{Car}.
Recently it has been shown that if $X$ is an open set $D$ in
$\mathbb R^n$, $n\geqslant3$, and $g^\alpha_D$, $\alpha\in(0,2]$, is
the $\alpha$-Green kernel on $D$ \cite[Chapter~IV, Section~5]{L},
then $\kappa=g^\alpha_D$ likewise is strictly positive definite and
moreover perfect \cite[Theorems~4.9, 4.11]{FZ}.\end{remark}

\begin{theorem}[{\rm see \cite{F1}}]\label{fu-complete} If a kernel\/ $\kappa$ on a locally compact space\/
$X$ is perfect, then the cone\/ $\mathcal E_\kappa^+(X)$ is strongly complete and the strong topology on\/
$\mathcal E_\kappa^+(X)$ is finer than the\/ {\rm(}induced\/{\rm)} vague topology on\/
$\mathcal E_\kappa^+(X)$.\end{theorem}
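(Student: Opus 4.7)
Both conclusions reduce to a single key fact: every strongly Cauchy sequence $\{\mu_n\}\subset\mathcal E_\kappa^+(X)$ admits at least one vague cluster point in $\mathfrak M^+(X)$. Granted this, the definition of perfectness (Definition~\ref{def-perf}) immediately supplies completeness, since any vague cluster point $\mu^*$ of such a sequence satisfies $\mu_n\to\mu^*$ strongly, so in particular $\mu^*\in\mathcal E_\kappa^+(X)$. For the topology comparison, suppose $\mu_n\to\mu$ strongly; then $\{\mu_n\}$ is strongly Cauchy, any vague cluster point $\mu'$ is likewise a strong limit by perfectness, and since the pre-Hilbert structure makes the strong topology Hausdorff, $\mu'=\mu$. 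Using the first-countability of the vague topology under our standing assumption that $X$ is metrizable and countable at infinity, a sequence with a unique vague cluster point converges vaguely to it, whence $\mu_n\to\mu$ vaguely. This shows that the identity map from the strong to the induced vague topology on $\mathcal E_\kappa^+(X)$ is (sequentially, hence) continuous.

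The core work is therefore to establish vague relative compactness of a strong Cauchy sequence. By Bourbaki's vague compactness criterion for positive Radon measures, it suffices to prove $\sup_n\mu_n(K)<\infty$ for every compact $K\subset X$. If $c_\kappa(K)=0$, then each $\mu_n$, being of finite energy, is $c_\kappa$-absolutely continuous, so $\mu_n(K)=0$. If $c_\kappa(K)>0$, the bound is obtained from an estimate of the form $\mu(K)\leqslant C_K\|\mu\|_\kappa$, valid for all $\mu\in\mathcal E_\kappa^+(X)$; combined with $\sup_n\|\mu_n\|_\kappa<\infty$ (a standard property of Cauchy sequences in a seminormed setting), this gives the desired uniform bound. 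The estimate itself I would extract from (\ref{compact-fin}) and the strict positive-definiteness of $\kappa$: one selects a test measure $\nu_K\in\mathcal E_\kappa^+(K,1;X)$ whose potential admits a strictly positive lower bound $c>0$ on a set carrying $\mu|_K$, and then uses the Cauchy--Schwarz inequality in $\mathcal E_\kappa(X)$ to write $\mu(K)\leqslant c^{-1}\langle U_\kappa^{\nu_K},\mu\rangle = c^{-1}E_\kappa(\mu,\nu_K)\leqslant c^{-1}\|\mu\|_\kappa\|\nu_K\|_\kappa$.

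The principal obstacle lies in securing such a lower-bound potential $U_\kappa^{\nu_K}\geqslant c$ without appealing to completeness (which is what we are trying to prove) in the construction of an equilibrium measure on $K$. The standard way around this circularity, which I would follow, is to work with a \emph{truncation} argument: approximate $K$ by a subset $K'$ of $K$ on which any would-be exceptional set of capacity zero is removed, use strict positive-definiteness and (\ref{cap-def}) to produce a finite-energy measure $\nu_K$ concentrated where its potential is bounded below (via a maximum-principle-free Fuglede argument, e.g.\ a Choquet-type selection within the class $\mathcal E_\kappa^+(K,1;X)$), and control the discarded set using the characterization of $c_\kappa$-negligibility via inner measures of $\mathcal E_\kappa^+(X)$. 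Once the capacity estimate is in hand, the remaining steps are formal consequences of Definition~\ref{def-perf}, the Hausdorff property of $\|\cdot\|_\kappa$, and the first-countability of the vague topology.
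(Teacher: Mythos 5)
The paper does not prove Theorem~\ref{fu-complete}; it is quoted from Fuglede~\cite{F1}, where it is established as part of the analysis of consistent and perfect kernels. That said, your plan is essentially Fuglede's own argument: reduce everything to vague relative compactness of strong Cauchy sequences via the inequality $\mu(K)\leqslant\sqrt{c_\kappa(K)}\,\|\mu\|_\kappa$ for $\mu\in\mathcal E_\kappa^+(X)$ and compact $K$, then read the two conclusions off Definition~\ref{def-perf} together with the Hausdorff property of the norm topology and the first countability of the vague topology. So the route is right.

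The one place where you over-complicate matters is the ``principal obstacle.'' There is no circularity to evade, and no Choquet-type selection or truncation of $K$ is needed. To obtain the test measure $\nu_K$ you simply minimize $E_\kappa$ over $\mathcal E_\kappa^+(K,1;X)$: the class $\mathfrak M^+(K,1;X)$ is vaguely compact because $K$ is compact (Bourbaki), $E_\kappa$ is vaguely l.s.c.\ on $\mathfrak M^+(X\times X)$ by Lemma~\ref{lemma-semi}, and the infimum equals $1/c_\kappa(K)\in(0,\infty)$ by~(\ref{cap-def}) and~(\ref{compact-fin}); hence a minimizer $\nu_K$ exists, with $w:=\|\nu_K\|_\kappa^2>0$. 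None of this invokes completeness. A standard first-variation argument (replace $\nu_K$ by $(1-t)\nu_K+t\tau$ with $\tau\in\mathcal E_\kappa^+(K',1;X)$ supported on a compact sub-level set $K'$ of $U_\kappa^{\nu_K}$) then forces $U_\kappa^{\nu_K}\geqslant w$ on $K$ except on a Borel set $N$ with $c_\kappa(N)=0$; by countable subadditivity of inner capacity and the characterization of negligible sets this $N$ is $\mu$-null for every $\mu\in\mathcal E_\kappa^+(X)$. Then your Cauchy--Schwarz chain applies verbatim and gives $\mu(K)\leqslant w^{-1}\|\nu_K\|_\kappa\|\mu\|_\kappa=\sqrt{c_\kappa(K)}\,\|\mu\|_\kappa$. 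One small wording caution: ``a sequence with a unique vague cluster point converges vaguely to it'' is false in general; it is true here because you have vague relative compactness of every subsequence (which is what the mass bound gives, not merely the existence of one cluster point), and you should invoke that explicitly at that step.
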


\begin{remark}\label{remma}In contrast to Theorem~\ref{fu-complete}, for a perfect kernel $\kappa$
the whole pre-Hilbert space $\mathcal E_\kappa(X)$ is in general
strongly {\it incomplete\/}, and this is the case even for the
$\alpha$-Riesz kernel of order $\alpha\in(1,n)$ on $\mathbb R^n$,
$n\geqslant 3$ (see \cite{Car} or \cite[Theorem~1.19]{L}). When
speaking of a completion of $\mathcal E_{\kappa_\alpha}(\mathbb
R^n)$, one needs to consider e.g.\ tempered distributions of finite
Deny–-Schwartz energy defined with the aid of the Fourier transform
\cite{D1}. Recently it has also been shown that if we restrict
ourselves to $\nu\in\mathcal E_{\kappa_\alpha}(\mathbb R^n)$ such
that $S_{\mathbb R^n}^\nu\subset D$, $D$ being a {\it bounded\/}
domain in $\mathbb R^n$, then the pre-Hilbert space of all those
$\nu$ can be isometrically imbedded into its completion, the Sobolev
space $\widetilde{H}^{-\alpha/2}(D)$, see \cite[Corollary~3.3]{HWZ}.
\end{remark}

\begin{remark}\label{remark}The concept of perfect kernel is an efficient tool in minimum energy problems
over classes of {\it positive scalar\/} Radon measures with finite
energy. Indeed, if $Q\subset X$ is closed,
$c_\kappa(Q)\in(0,\infty)$, and $\kappa$ is perfect, then the
minimum energy problem (\ref{cap-def}) has a unique solution
$\lambda_Q$, termed the ({\it inner\/}) $\kappa$-{\it capacitary
measure\/} on~$Q$ \cite[Theorem~4.1]{F1}. Later the concept of
perfectness has been shown to be efficient also in minimum energy
problems over classes of {\it vector measures\/} of finite or
infinite dimensions associated with a standard condenser, see
\cite{ZPot1}--\cite{ZPot3}. The approach developed in
\cite{ZPot1}--\cite{ZPot3} used substantially the assumption of the
boundedness of the kernel on the product of the oppositely charged
plates of a condenser, which made it possible to extend Cartan's
proof \cite{Car} of the strong completeness of the cone $\mathcal
E_{\kappa_2}^+(\mathbb R^n)$ of all positive measures on $\mathbb
R^n$ with finite Newtonian energy to an arbitrary perfect kernel
$\kappa$ on a locally compact space $X$ and suitable classes of {\it
signed\/} measures $\mu\in\mathcal E_\kappa(X)$; compare with
Remark~\ref{remma} above.\end{remark}

\section{$\alpha$-Riesz balayage and $\alpha$-Green function}\label{sec:RG}
In all that follows fix $n\geqslant3$, $\alpha\in(0,2]$ and a domain
$D\subset\mathbb R^n$ with $c_{\kappa_\alpha}(D^c)>0$, where
$D^c:=\mathbb R^n\setminus D$, and assume that either
$\kappa=\kappa_\alpha$ is the $\alpha$-{\it Riesz kernel\/} on
$X=\mathbb R^n$, or $\kappa=g_D^\alpha$ is the $\alpha$-{\it Green
kernel\/} on $X=D$ \cite[Chapter~IV, Section~5]{L} (or see below).
We simply write $\alpha$ instead of $\kappa_\alpha$ if
$\kappa_\alpha$ serves as an index, and we use the short form `n.e.'
instead of `$c_\alpha$-n.e.' if this will not cause any
mis\-under\-standing.

Given $x\in\mathbb R^n$ and $r\in(0,\infty)$, write
$B(x,r):=\{y\in\mathbb R^n:\ |y-x|<r\}$, $S(x,r):=\{y\in\mathbb
R^n:\ |y-x|=r\}$ and $\overline{B}(x,r):=B(x,r)\cup S(x,r)$.
Throughout the paper $\partial Q$ denotes the boundary of a set
$Q\subset\mathbb R^n$ in the topology of~$\mathbb R^n$.

When speaking of a positive Radon measure $\mu$ on $\mathbb R^n$, we
always tacitly assume that $U_\alpha^\mu$ is not identically
infinite. This implies that
\begin{equation}\label{1.3.10}\int_{|y|>1}\,\frac{d\mu(y)}{|y|^{n-\alpha}}<\infty,\end{equation}
see \cite[Eq.~(1.3.10)]{L}, and consequently that $U_\alpha^\mu$ is
finite ($c_\alpha$-)n.e.\ on $\mathbb R^n$ \cite[Chap\-ter~III,
Section~1]{L}; these two implications can actually be reversed.

\begin{definition}\label{d-ext} $\nu\in\mathfrak M(D)$ is called {\it extendible\/} if there exist
$\widehat{\nu^+}$ and $\widehat{\nu^-}$ extending $\nu^+$ and
$\nu^-$, respectively, by $0$ off $D$ to all of $\mathbb R^n$, see
(\ref{extend}), and if these $\widehat{\nu^+}$ and $\widehat{\nu^-}$
satisfy (\ref{1.3.10}). We identify such $\nu\in\mathfrak M(D)$ with
its extension $\widehat\nu:=\widehat{\nu^+}-\widehat{\nu^-}$, and we
therefore write $\widehat\nu=\nu$.\end{definition}

Every bounded measure $\nu\in\mathfrak M(D)$ is extendible. The converse holds if $D$ is bounded, but not in
general (e.g.\ not if $D^c$ is compact). The set of all extendible measures $\nu\in\mathfrak M(D)$ consists
of all the restrictions $\mu|_D$ where $\mu$ ranges over $\mathfrak M(\mathbb R^n)$.

The $\alpha$-{\it Green kernel\/} $g=g_D^\alpha$ on $D$ is defined by
\begin{equation*}g^\alpha_D(x,y)=U_\alpha^{\varepsilon_y}(x)-
U_\alpha^{\varepsilon_y^{D^c}}(x)\text{ \ for all\ }x,y\in
D,\end{equation*} where $\varepsilon_y$ denotes the unit Dirac
measure at a point $y$ and $\varepsilon_y^{D^c}$ its $\alpha$-{\it
Riesz balayage\/} onto the (closed) set $D^c$, determined uniquely
in the frame of the classical approach by \cite[Theorem~3.6]{FZ}.
See also the book by Bliedtner and Hansen \cite{BH} where balayage
is studied in the setting of balayage spaces.

We shall simply write $\mu'$ instead of $\mu^{D^c}$ when speaking of
the $\alpha$-Riesz balayage of $\mu\in\mathfrak M^+(D;\mathbb R^n)$
onto $D^c$. According to \cite[Corollaries~3.19, 3.20]{FZ}, for any
$\mu\in\mathfrak M^+(D;\mathbb R^n)$ the balayage $\mu'$ is
$c_\alpha$-ab\-sol\-ut\-ely continuous, and it is determined
uniquely by the relation
\begin{equation}\label{bal-eq}U_\alpha^{\mu'}=U_\alpha^{\mu}\text{ \ n.e.\ on\ }D^c\end{equation}
among the $c_\alpha$-absolutely continuous positive measures on $\mathbb R^n$ supported by $D^c$.
Furthermore, there holds the integral representation
\begin{equation}\label{int-repr}\mu'=\int\varepsilon_y'\,d\mu(y),\end{equation}
see \cite[Theorem~3.17]{FZ}.\footnote{In the literature the integral
representation (\ref{int-repr}) seems to have been more or less
taken for granted, though it has been pointed out in
\cite[Chapter~V, Section~3, n$^\circ$\,1]{B2} that it requires that
the family $(\varepsilon_y')_{y\in D}$ be $\mu${\it -ad\-equ\-ate\/}
in the sense of \cite[Chapter~V, Section~3, Definition~1]{B2}; see
also counterexamples (without $\mu$-ad\-equ\-acy) in Exercises~1
and~2 at the end of that section. A proof of this adequacy  has
therefore been given in \cite[Lemma~3.16]{FZ}.} If moreover
$\mu\in\mathcal E_\alpha^+(D;\mathbb R^n)$, then the balayage $\mu'$
is in fact the orthogonal projection of $\mu$ onto the convex cone
$\mathcal E^+_\alpha(D^c;\mathbb R^n)$, i.e.\ $\mu'\in\mathcal
E^+_\alpha(D^c;\mathbb R^n)$ and
\begin{equation}\label{proj}\|\mu-\theta\|_\alpha>\|\mu-\mu'\|_\alpha\text{ \ for all \ }
\theta\in\mathcal E^+_\alpha(D^c;\mathbb R^n), \
\theta\ne\mu'\end{equation} (see \cite[Theorem~4.12]{Fu5} or
\cite[Theorem~3.1]{FZ}).

If now $\nu\in\mathfrak M(D)$ is an extendible ({\it signed\/}
Radon) measure, then $\nu':=\nu^{D^c}:=(\nu^+)'-(\nu^-)'$ is said to
be a {\it balayage\/} of $\nu$ onto $D^c$. It follows from
\cite[Chapter~III, Section~1, n$^\circ$\,1, Remark]{L} that the
balayage $\nu'$ is determined uniquely by (\ref{bal-eq}) with $\nu$
in place of $\mu$ among the $c_\alpha$-absolutely continuous signed
measures on $\mathbb R^n$ supported by~$D^c$.

The following definition goes back to Brelot
\cite[Theorem~VII.13]{Brelo2}. A closed set $F\subset\mathbb R^n$ is
said to be {\it $\alpha$-thin at infinity\/} if either $F$ is
compact, or the inverse of $F$ relative to $S(0,1)$ has $x=0$ as an
$\alpha$-irregular boundary point (cf.\ \cite[Theorem~5.10]{L}).

\begin{theorem}[{\rm see \cite[Theorem~3.22]{FZ}}]\label{bal-mass-th} The set\/ $D^c$ is not\/
$\alpha$-thin at infinity if and only if for every bounded measure\/
$\mu\in\mathfrak M^+(D)$ we have\/ $\mu'(\mathbb R^n)=\mu(\mathbb
R^n)$.\footnote{In general, $\nu^{D^c}(\mathbb
R^n)\leqslant\nu(\mathbb R^n)$ for every $\nu\in\mathfrak
M^+(\mathbb R^n)$ \cite[Theorem~3.11]{FZ}.}
\end{theorem}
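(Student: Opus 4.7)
The plan is to reduce the theorem, via the integral representation (\ref{int-repr}), to a pointwise claim about $\varepsilon_y'(\mathbb R^n)$, and then to analyze that quantity both directly (for compact $D^c$) and through Kelvin inversion (for unbounded $D^c$), translating $\alpha$-thinness of $D^c$ at infinity into an $\alpha$-regularity question for the inverted set at the origin.

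First, the reduction: by (\ref{int-repr}), $\mu'(\mathbb R^n)=\int\varepsilon_y'(\mathbb R^n)\,d\mu(y)$ for every bounded $\mu\in\mathfrak M^+(D)$, while $\varepsilon_y'(\mathbb R^n)\leqslant 1$ in general. Specializing to $\mu=\varepsilon_y$ then shows that $\mu'(\mathbb R^n)=\mu(\mathbb R^n)$ for \emph{every} bounded $\mu\in\mathfrak M^+(D)$ if and only if $\varepsilon_y'(\mathbb R^n)=1$ for every $y\in D$. The theorem thus reduces to proving this pointwise equivalence.

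For the sufficiency, fix $y\in D$. By (\ref{bal-eq}), $U_\alpha^{\varepsilon_y'}(x)=|x-y|^{\alpha-n}$ $c_\alpha$-n.e.\ on $D^c$. Since $\varepsilon_y'$ is bounded and satisfies (\ref{1.3.10}), one has $U_\alpha^{\varepsilon_y'}(x)\sim\varepsilon_y'(\mathbb R^n)\cdot|x|^{\alpha-n}$ as $|x|\to\infty$, while $|x-y|^{\alpha-n}\sim|x|^{\alpha-n}$. If $D^c$ is not $\alpha$-thin at infinity, the inverse $(D^c)^*$ of $D^c$ with respect to $S(0,1)$ has $x=0$ as an $\alpha$-regular boundary point, and the classical characterization of regularity (in terms of the equilibrium potential attaining its full value) lets one avoid the $c_\alpha$-exceptional set in the balayage equation along a sequence $x_k\in D^c$ with $|x_k|\to\infty$; passing to the limit forces $\varepsilon_y'(\mathbb R^n)=1$.

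For the necessity, I argue contrapositively. If $D^c$ is compact, choose $y_k\in D$ with $|y_k|\to\infty$ and test $\varepsilon_{y_k}'$ against the $\alpha$-equilibrium measure $\lambda$ of $D^c$ (which has finite energy by (\ref{compact-fin})): Fubini together with (\ref{bal-eq}) and the $c_\alpha$-absolute continuity of $\lambda$ give $\langle U_\alpha^\lambda,\varepsilon_{y_k}'\rangle=\langle U_\alpha^{\varepsilon_{y_k}},\lambda\rangle=U_\alpha^\lambda(y_k)\to 0$, while $U_\alpha^\lambda$ is bounded below on $D^c$ off a polar set; hence $\varepsilon_{y_k}'(\mathbb R^n)\to 0$ and mass conservation fails for large $k$. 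If $D^c$ is unbounded but $\alpha$-thin at infinity, then $(D^c)^*$ has $0$ as an $\alpha$-irregular boundary point, and the associated defect of the equilibrium potential at $0$, pulled back through Kelvin inversion, yields a point $y\in D$ with $\varepsilon_y'(\mathbb R^n)<1$. The principal difficulty in both directions is precisely the Kelvin-inversion step: one must verify that the Kelvin image of the balayage $\varepsilon_y^{D^c}$ solves, up to an explicit density weight, the analogous balayage problem for $(D^c)^*$, so that $1-\varepsilon_y'(\mathbb R^n)$ is governed exactly by the equilibrium-potential defect at $0$; once this correspondence is in place, the classical Wiener--Kellogg characterization of $\alpha$-regularity (cf.\ \cite[Theorem~5.10]{L}) closes both implications simultaneously.
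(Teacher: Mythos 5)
First, a structural note: the paper does not actually prove this theorem; it is stated with the attribution \cite[Theorem~3.22]{FZ}, so the proof is delegated to that reference. There is therefore no in-paper proof against which to compare your approach, and I assess the proposal on its own merits.

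Your reduction via the integral representation (\ref{int-repr}) to the pointwise claim $\varepsilon_y'(\mathbb R^n)=1$ for all $y\in D$ is correct (and the Dirac measures $\mu=\varepsilon_y$ show the pointwise claim is genuinely needed). Your treatment of the necessity direction when $D^c$ is compact is also correct: the pairing
\[
\varepsilon_{y_k}'(\mathbb R^n)=\langle U_\alpha^\lambda,\varepsilon_{y_k}'\rangle
=\langle U_\alpha^{\varepsilon_{y_k}'},\lambda\rangle
=\langle U_\alpha^{\varepsilon_{y_k}},\lambda\rangle
=U_\alpha^\lambda(y_k)\longrightarrow0
\]
is justified by the symmetry of the mutual energy, the balayage identity (\ref{bal-eq}), and the $c_\alpha$-absolute continuity of both $\lambda$ and $\varepsilon_{y_k}'$.

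The gap is in the sufficiency direction. You invoke the asymptotic $|x|^{n-\alpha}U_\alpha^{\varepsilon_y'}(x)\to\varepsilon_y'(\mathbb R^n)$ as $|x|\to\infty$. For a bounded positive measure with noncompact support — and $\varepsilon_y'$ is carried by $D^c$, which is unbounded precisely when it is not $\alpha$-thin at infinity — this is not automatic and is not a consequence of (\ref{1.3.10}). What one has for free is only the Fatou inequality
\[
\liminf_{|x|\to\infty}|x|^{n-\alpha}U_\alpha^{\varepsilon_y'}(x)\geqslant\varepsilon_y'(\mathbb R^n),
\]
together with the upper bound $|x|^{n-\alpha}U_\alpha^{\varepsilon_y'}(x)\leqslant\bigl(|x|/|x-y|\bigr)^{n-\alpha}$ coming from $U_\alpha^{\varepsilon_y'}\leqslant U_\alpha^{\varepsilon_y}$. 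Combining the Fatou inequality with the fact that $|x_k|^{n-\alpha}U_\alpha^{\varepsilon_y'}(x_k)\to1$ along your chosen sequence $x_k\in D^c$ avoiding the exceptional set yields only $\varepsilon_y'(\mathbb R^n)\leqslant1$, which is the already-known inequality in the theorem's footnote. To conclude $\varepsilon_y'(\mathbb R^n)\geqslant1$, you would need the reverse (dominated-convergence type) inequality $\limsup_{|x|\to\infty}|x|^{n-\alpha}U_\alpha^{\varepsilon_y'}(x)\leqslant\varepsilon_y'(\mathbb R^n)$, and that is precisely the nontrivial content of the theorem; it is not provided by a pointwise decay estimate and in effect assumes the conclusion. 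Establishing it requires the same Kelvin-inversion/regularity machinery (or an equivalent balayage/escape argument) that you invoke only for the necessity direction, and which you leave as an outline for the unbounded-thin case.

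In short: the reduction and the compact-$D^c$ case are sound; the sufficiency direction has a genuine gap because the asymptotic claim is unjustified and its provable half gives only the trivial inequality; and the Kelvin-inversion step in the unbounded-thin necessity case, which you correctly identify as essential, is sketched but not carried out.
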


As noted in Remark~\ref{rem:clas}, the $\alpha$-Riesz kernel
$\kappa_\alpha$ on $\mathbb R^n$ as well as the $\alpha$-Green
kernel $g^\alpha_D$ on $D$ is strictly positive definite and
moreover perfect. Furthermore, the kernel $\kappa_\alpha$ (with
$\alpha\in(0,2]$) satisfies the complete maximum principle in the
form stated in \cite[Theorems~1.27, 1.29]{L}. Regarding a similar
result for the kernel $g$, the following assertion holds.

\begin{theorem}[{\rm see \cite[Theorem~4.6]{FZ}}]\label{th-dom-pr} Let\/ $\mu\in\mathcal E^+_g(D)$, let\/
$\nu\in\mathfrak M^+(D)$ be extendible, and let\/ $v$ be a positive\/ $\alpha$-super\-har\-monic function
on\/ $\mathbb R^n$ {\rm({\it see\/} \cite[{\it Chapter\/}~I, {\it Section\/}~5, {\it n\/}$^\circ$\,20]{L})}.
If moreover\/ $U_g^\mu\leqslant U_g^\nu+v$ $\mu$-a.e.\ on $D$, then the same inequality holds on all
of\/~$D$.\end{theorem}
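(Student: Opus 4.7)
The plan is to reduce the Green-kernel domination principle to the classical complete maximum principle for the $\alpha$-Riesz kernel (Landkof, Theorems~1.27 and~1.29, cited just before the statement). The bridge is the identity
\begin{equation*}
U_g^\sigma(x) = U_\alpha^\sigma(x) - U_\alpha^{\sigma'}(x),\quad x \in D,
\end{equation*}
valid for every extendible $\sigma \in \mathfrak{M}^+(D;\mathbb{R}^n)$, which follows from the definition $g(x,y) = U_\alpha^{\varepsilon_y}(x) - U_\alpha^{\varepsilon_y'}(x)$ upon integrating against $d\sigma(y)$ and invoking the integral representation \eqref{int-repr} together with Tonelli's theorem. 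Applied to $\mu$ and to $\nu$, this recasts the hypothesis as
\begin{equation*}
U_\alpha^{\mu+\nu'} \leq U_\alpha^{\nu+\mu'} + v \quad \mu\text{-a.e.\ on } D.
\end{equation*}

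The next step is to promote this Riesz-potential inequality to a $(\mu+\nu')$-a.e.\ inequality on all of $\mathbb{R}^n$. On $D^c$ the characterizing relation \eqref{bal-eq} gives $U_\alpha^{\mu'} = U_\alpha^\mu$ and $U_\alpha^{\nu'} = U_\alpha^\nu$ $c_\alpha$-n.e., so the two potentials in the display above coincide $c_\alpha$-n.e.\ on $D^c$, and the desired bound holds there for free (since $v \geq 0$). Since $\nu'$ is $c_\alpha$-absolutely continuous by \cite[Corollaries~3.19, 3.20]{FZ} (recalled just before \eqref{bal-eq}), every $c_\alpha$-n.e.\ statement is $\nu'$-a.e., and combined with the $\mu$-a.e.\ bound on $D$ this yields the required $(\mu+\nu')$-a.e.\ bound on $\mathbb{R}^n$. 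The complete maximum principle for $\kappa_\alpha$ then upgrades the inequality to all of $\mathbb{R}^n$, and restricting back to $D$ and subtracting $U_\alpha^{\mu'}$ and $U_\alpha^{\nu'}$ (both finite $c_\alpha$-n.e., hence at the points where we are evaluating) restores $U_g^\mu \leq U_g^\nu + v$ on $D$.

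The expected main obstacle is the very first step: verifying that $\mu \in \mathcal{E}_g^+(D)$ is extendible and that the identity $U_g^\mu = U_\alpha^\mu - U_\alpha^{\mu'}$ holds quantitatively at the points we need it, despite the fact---emphasized in the excerpt via the failure of \cite[Lemma~2.4]{DFHSZ}---that $E_g(\mu)<\infty$ does not imply $E_\alpha(\mu)<\infty$. I would circumvent this by monotone exhaustion: taking $\mu_j := \mu|_{K_j}$ for an increasing sequence of compact sets $K_j \uparrow D$, each $\mu_j$ is bounded (hence extendible) with a well-behaved Riesz potential, so the argument above applies to $\mu_j$ in place of $\mu$ directly. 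Monotone convergence $U_g^{\mu_j} \uparrow U_g^\mu$ on $D$, together with the strong convergence $\mu_j \to \mu$ in $\mathcal{E}_g^+(D)$ afforded by the perfectness of $g$ (Theorem~\ref{fu-complete}), then transfers the pointwise conclusion from the $\mu_j$ back to the original $\mu$.
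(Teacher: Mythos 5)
The paper does not prove this theorem: it is quoted verbatim from \cite[Theorem~4.6]{FZ}, so there is no in-paper proof to compare against, and your attempt can only be judged on its own merits and against what such a proof must look like.

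Your strategy---reducing the Green-kernel domination principle to the Riesz complete maximum principle via the balayage identity $U_g^\sigma = U_\alpha^\sigma - U_\alpha^{\sigma'}$, and handling the possibly non-extendible $\mu\in\mathcal E_g^+(D)$ by exhausting with $\mu_j := \mu|_{K_j}$---is sound and is almost certainly the spirit of the proof in \cite{FZ}. Two remarks, however, on places where the write-up papers over real work.

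First, the identity $U_g^\sigma = U_\alpha^\sigma - U_\alpha^{\sigma'}$ (Lemma~\ref{l-hatg}) holds only $c_\alpha$-n.e.\ on $D$, not pointwise as you state. This does not hurt you on the way in, because $\mu_j$ and $\nu'$ are $c_\alpha$-absolutely continuous so the transformed hypothesis really does hold $(\mu_j+\nu')$-a.e. But it does hurt you on the way out: after invoking Landkof's complete maximum principle you obtain $U_\alpha^{\mu_j+\nu'} \leq U_\alpha^{\nu+(\mu_j)'} + v$ \emph{everywhere}, yet ``subtracting'' $U_\alpha^{\nu'}$ and $U_\alpha^{(\mu_j)'}$ and reconverting to Green potentials only recovers $U_g^{\mu_j} \leq U_g^\nu + v$ n.e.\ on $D$. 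The theorem claims the inequality on \emph{all} of $D$. To close this gap one must observe that $U_g^{\mu_j}$ and $U_g^\nu + v$ are both $\alpha$-superharmonic on $D$ (the Green potential of a positive measure is $\alpha$-superharmonic there, since $U_\alpha^{(\cdot)'}$ is $\alpha$-harmonic on $D$), and an $\alpha$-superharmonic inequality valid n.e.\ extends to a pointwise one by the averaging characterization of superharmonicity, since the averaging measures charge no set of zero capacity. This step, though standard, is genuinely needed and is skipped in your proposal.

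Second, at the very end the appeal to the perfectness of $g$ and strong convergence $\mu_j\to\mu$ in $\mathcal E_g^+(D)$ is unnecessary baggage: since $\mu_j\uparrow\mu$ setwise and $g\geqslant 0$, one has $U_g^{\mu_j}\uparrow U_g^\mu$ pointwise on $D$ by monotone convergence, and passing to the limit in $U_g^{\mu_j}\leqslant U_g^\nu + v$ immediately gives the conclusion. Conversely, a point worth stating explicitly (you only gesture at it) is that each $\mu_j$ does satisfy the hypothesis of the theorem: $\mu_j\leqslant\mu$ gives $U_g^{\mu_j}\leqslant U_g^\mu$, and $\mu_j$-a.e.\ is implied by $\mu$-a.e.

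With these two repairs the argument is complete.
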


The following three lemmas establish relations between potentials
and energies relative to the kernels $\kappa_\alpha$ and
$g=g^\alpha_D$.

\begin{lemma}\label{l-hatg} For any extendible measure\/ $\mu\in\mathfrak M(D)$ the $\alpha$-Green
potential\/ $U_g^{\mu}$ is finite\/ {\rm(}$c_\alpha$-{\rm)}n.e.\
on\/ $D$ and given by\/\footnote{If $Q$ is a given subset of $D$,
then any assertion involving a variable point holds n.e.\ on $Q$ if
and only if it holds $c_g$-n.e.\ on $Q$, see
\cite[Lemma~2.6]{DFHSZ}.\label{RG}}
\begin{equation}\label{hatg}U_g^{\mu}=U_\alpha^{\mu-\mu'}\text{ \ n.e.\ on\ }D.\end{equation}
\end{lemma}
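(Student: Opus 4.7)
The plan is to reduce to the positive case and then convert the $\alpha$-Green potential of $\mu$ into a difference of $\alpha$-Riesz potentials by invoking the integral representation (\ref{int-repr}) of balayage. By Definition~\ref{d-ext}, extendibility of $\mu$ means that $\mu^+$ and $\mu^-$ are separately extendible, and the balayage $\nu\mapsto\nu'$ on signed extendible measures was defined by linearity, so it suffices to establish (\ref{hatg}) for $\mu\in\mathfrak M^+(D)$ extendible. For such $\mu$, condition (\ref{1.3.10}) holds, so the result from \cite[Chapter~III, Section~1]{L} cited just after (\ref{1.3.10}) yields that $U_\alpha^\mu$ is finite $c_\alpha$-n.e.\ on $\mathbb R^n$.

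Starting from the defining identity $g(x,y)=U_\alpha^{\varepsilon_y}(x)-U_\alpha^{\varepsilon_y'}(x)$ for $x,y\in D$, I would integrate term by term against $\mu$. For the first term, Tonelli applied to the positive kernel $\kappa_\alpha$ gives $\int U_\alpha^{\varepsilon_y}(x)\,d\mu(y)=U_\alpha^\mu(x)$. For the second term, I would invoke the integral representation $\mu'=\int\varepsilon_y'\,d\mu(y)$ from (\ref{int-repr}) and Tonelli once more to obtain
\begin{equation*}
\int U_\alpha^{\varepsilon_y'}(x)\,d\mu(y)=\int\!\!\int\kappa_\alpha(x,z)\,d\varepsilon_y'(z)\,d\mu(y)=\int\kappa_\alpha(x,z)\,d\mu'(z)=U_\alpha^{\mu'}(x).
\end{equation*}
Since $g\geqslant0$ on $D\times D$ forces $U_\alpha^{\varepsilon_y'}(x)\leqslant U_\alpha^{\varepsilon_y}(x)$ for $x,y\in D$, integration of this pointwise bound against $\mu$ yields $U_\alpha^{\mu'}\leqslant U_\alpha^\mu$ on $D$, so $U_\alpha^{\mu'}$ is likewise finite n.e.\ on $D$. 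Combining the two Fubini identities at any $x\in D$ where both right-hand sides are finite—which is the case n.e.\ on $D$—gives both the finiteness of $U_g^\mu(x)$ and the identity $U_g^\mu(x)=U_\alpha^\mu(x)-U_\alpha^{\mu'}(x)$. Footnote~\ref{RG} then reconciles $c_\alpha$-n.e.\ with $c_g$-n.e.\ on subsets of $D$, matching the formulation of the lemma.

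The main obstacle is the legitimacy of the Fubini exchange for the balayage term, since $\varepsilon_y'$ is not given by any explicit pointwise formula in $y$. That difficulty is absorbed entirely by the $\mu$-adequacy of the family $(\varepsilon_y')_{y\in D}$ in the sense of \cite[Chapter~V, Section~3, Definition~1]{B2}, established in \cite[Lemma~3.16]{FZ}; this is precisely the subtlety flagged in the footnote to~(\ref{int-repr}). Once that adequacy is cited, the remainder of the argument is a routine rearrangement of positive integrands, together with the standard finiteness statement for Riesz potentials of measures satisfying~(\ref{1.3.10}).
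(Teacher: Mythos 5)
Your proposal is correct and follows essentially the same route as the paper: reduce to the positive case, integrate the defining identity $g(x,y)=U_\alpha^{\varepsilon_y}(x)-U_\alpha^{\varepsilon_y'}(x)$ against $\mu$, and justify the exchange of integrals for the balayage term via the integral representation (\ref{int-repr}) together with the $\mu$-adequacy of $(\varepsilon_y')_{y\in D}$ from \cite[Lemma~3.16]{FZ}. The only cosmetic difference is that you invoke ``Tonelli'' where the paper cites \cite[Chapter~V, Section~3, Theorem~1]{B2} (which is the precise statement needed, since the issue is integrating a lower semicontinuous function against an $\mu$-adequate family of measures rather than against a product measure), and you deduce n.e.\ finiteness of $U_\alpha^{\mu'}$ on $D$ from $g\geqslant0$ whereas the paper observes it directly from the balayage inequality $U_\alpha^{\mu'}\leqslant U_\alpha^\mu$; both are sound.
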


\begin{proof}It is seen from Definition~\ref{d-ext} that $U_\alpha^\mu$ is finite n.e.\ on $\mathbb R^n$,
and hence so is $U_\alpha^{\mu'}$. Applying  (\ref{int-repr}) to $\mu^\pm$, we get by
\cite[Chapter~V, Section~3, Theorem~1]{B2}
 \[U_g^\mu=\int\,\bigl[U_\alpha^{\varepsilon_y}-U_\alpha^{\varepsilon_y'}\bigr]\,d\mu(y)=
 U_\alpha^\mu-U_\alpha^{\mu'}\]
n.e.\ on $D$, as was to be proved.\end{proof}

\begin{lemma}\label{l-hen'} If\/ $\mu\in\mathfrak M(D)$ is extendible and its extension belongs
to\/ $\mathcal E_\alpha(\mathbb R^n)$, then
\begin{align}
\label{l3-1}&\mu\in\mathcal E_g(D),\\
\label{l3-2}&\mu-\mu'\in\mathcal E_\alpha(\mathbb R^n),\\
\label{eq1-2-hen}&\|\mu\|^2_g=\|\mu-\mu'\|^2_\alpha=\|\mu\|^2_\alpha-\|\mu'\|^2_\alpha.
\end{align}
\end{lemma}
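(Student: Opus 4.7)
The plan is to reduce everything to the positive case via the Hahn--Jordan decomposition, exploit the projection characterization (\ref{proj}) of balayage to get (\ref{l3-2}), derive the Riesz identity in (\ref{eq1-2-hen}) from the defining relation (\ref{bal-eq}), and finally obtain (\ref{l3-1}) together with the Green half of (\ref{eq1-2-hen}) by integrating the potential identity (\ref{hatg}) of Lemma~\ref{l-hatg} against $\mu$. The recurring technical point is that every $c_\alpha$-n.e.\ equality of potentials can be integrated against any measure in $\mathcal E_\alpha$, because such measures are $c_\alpha$-absolutely continuous by Lemma~2.3.1 of \cite{F1}.

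First I reduce to the positive case. From $\widehat{\mu}\in\mathcal E_\alpha(\mathbb R^n)$ and the nonnegativity of all four terms in the expansion $E_\alpha(|\mu|)=\|\mu^+\|_\alpha^2+2E_\alpha(\mu^+,\mu^-)+\|\mu^-\|_\alpha^2$, I obtain $\mu^\pm\in\mathcal E_\alpha^+(\mathbb R^n)$, both carried by $D$. For any $\sigma\in\mathcal E_\alpha^+(D;\mathbb R^n)$, (\ref{proj}) says $\sigma'$ is the orthogonal projection of $\sigma$ on $\mathcal E_\alpha^+(D^c;\mathbb R^n)$; in particular $\sigma'\in\mathcal E_\alpha^+(\mathbb R^n)$, so all mutual energies between $\sigma,\sigma^{\prime}$ are finite by Cauchy--Schwarz, whence $\sigma-\sigma'\in\mathcal E_\alpha(\mathbb R^n)$. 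Because $\sigma'$ is $c_\alpha$-absolutely continuous and supported by $D^c$, the identity (\ref{bal-eq}) $U_\alpha^{\sigma'}=U_\alpha^\sigma$ n.e.\ on $D^c$ holds $\sigma'$-a.e., so
\[
\|\sigma'\|_\alpha^2=\int U_\alpha^{\sigma'}\,d\sigma'=\int U_\alpha^\sigma\,d\sigma'=E_\alpha(\sigma,\sigma').
\]
Expanding $\|\sigma-\sigma'\|_\alpha^2=\|\sigma\|_\alpha^2-2E_\alpha(\sigma,\sigma')+\|\sigma'\|_\alpha^2$ then yields $\|\sigma-\sigma'\|_\alpha^2=\|\sigma\|_\alpha^2-\|\sigma'\|_\alpha^2$.

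Applying this to $\sigma=\mu^+$ and $\sigma=\mu^-$ and writing $\mu-\mu'=(\mu^+-(\mu^+)')-(\mu^--(\mu^-)')$ gives (\ref{l3-2}). Since $\mu'$ is $c_\alpha$-absolutely continuous (being the difference of two such) and $U_\alpha^{\mu'}=U_\alpha^\mu$ n.e.\ on $D^c$, the argument above upgraded via Fubini to the signed setting gives $E_\alpha(\mu',\mu)=\int U_\alpha^\mu\,d\mu'=\int U_\alpha^{\mu'}\,d\mu'=\|\mu'\|_\alpha^2$, whence $\|\mu-\mu'\|_\alpha^2=\|\mu\|_\alpha^2-\|\mu'\|_\alpha^2$, which is the Riesz half of (\ref{eq1-2-hen}). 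For (\ref{l3-1}) and the remaining equality, I invoke Lemma~\ref{l-hatg}: $U_g^\mu=U_\alpha^{\mu-\mu'}$ n.e.\ on $D$, and since $|\mu|\in\mathcal E_\alpha^+$ is $c_\alpha$-absolutely continuous, this equality holds $|\mu|$-a.e.\ on $D$. Integrating against $\mu$ (all resulting mutual energies are already known to be finite) yields
\[
E_g(\mu)=\int U_\alpha^{\mu-\mu'}\,d\mu=\|\mu\|_\alpha^2-E_\alpha(\mu',\mu)=\|\mu\|_\alpha^2-\|\mu'\|_\alpha^2=\|\mu-\mu'\|_\alpha^2,
\]
giving both $\mu\in\mathcal E_g(D)$ and the full chain (\ref{eq1-2-hen}).

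The main obstacle, as I see it, is not any single deep step but the careful bookkeeping in the signed case: one must verify at each stage that the n.e.\ identities of potentials can be integrated against the relevant (possibly signed) measures, which here comes down to $c_\alpha$-absolute continuity of measures of finite $\alpha$-Riesz energy and of the balayage $\mu'$, together with the fact that the equivalence between $c_\alpha$- and $c_g$-negligibility (footnote~\ref{RG}) lets one freely pass between the two capacities inside~$D$.
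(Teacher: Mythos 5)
Your derivation of (\ref{l3-2}) and of the second equality in (\ref{eq1-2-hen}) via the projection characterization (\ref{proj}) and the defining relation (\ref{bal-eq}) is sound and is in fact more self-contained than the paper's (which simply cites \cite[Corollary~3.7]{FZ} for (\ref{l3-2})). However, your route to (\ref{l3-1}) is circular. In your final step you write $E_g(\mu)=\int U_\alpha^{\mu-\mu'}\,d\mu$ and claim that this ``gives $\mu\in\mathcal E_g(D)$''; but the identification of $\int U_g^\mu\,d\mu$ with $E_g(\mu)$, and indeed the very fact that $\int U_g^\mu\,d\mu$ splits as $\int U_g^{\mu^+}\,d\mu^+-2\int U_g^{\mu^+}\,d\mu^-+\int U_g^{\mu^-}\,d\mu^-$ without an $\infty-\infty$ catastrophe, already requires finiteness of the three positive quantities $E_g(\mu^+)$, $E_g(\mu^-)$, $E_g(\mu^+,\mu^-)$ — which is precisely the content of (\ref{l3-1}). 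Knowing that $U_g^\mu=U_\alpha^{\mu-\mu'}$ agrees $|\mu|$-a.e.\ with a $|\mu|$-integrable function does not by itself control the individual nonnegative terms $\int U_g^{\mu^\pm}\,d\mu^\pm$.

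The gap is easy to close, and there are two natural ways. The paper's way: $g^\alpha_D(x,y)<\kappa_\alpha(x,y)$ pointwise on $D\times D$ (relation (\ref{g-ineq})), so $E_g(|\mu|)\leqslant E_\alpha(|\mu|)<\infty$ immediately, giving (\ref{l3-1}) before any integration. A way that stays closer to your plan: run the integration of Lemma~\ref{l-hatg} first for the positive measures $\mu^+$ and $\mu^-$ separately — there $E_g(\mu^\pm)=\int U_g^{\mu^\pm}\,d\mu^\pm$ is a priori well defined in $[0,\infty]$ and the computation shows it equals a finite $\alpha$-energy — then invoke Cauchy--Schwarz in $\mathcal E_g(D)$ for the cross term $E_g(\mu^+,\mu^-)$. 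Either repair makes your subsequent integration against the signed $\mu$ legitimate and yields the first equality in (\ref{eq1-2-hen}) exactly as you intended.
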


\begin{proof}In view of the definition of a signed measure of finite energy (see Section~\ref{sec:princ}),
we obtain (\ref{l3-1}) from the inequality\footnote{The strict
inequality in (\ref{g-ineq}) is caused by our convention that
$c_\alpha(D^c)>0$.}
\begin{equation}\label{g-ineq}g^\alpha_D(x,y)<\kappa_\alpha(x,y)\text{ \ for all \ }x,y\in D,\end{equation}
while (\ref{l3-2}) from \cite[Corollary~3.7]{FZ} or \cite[Theorems~3.1, 3.6]{FZ}.
According to Lemma~\ref{l-hatg} and footnote~\ref{RG}, $U_g^\mu$ is finite $c_g$-n.e.\ on $D$ and given
by (\ref{hatg}), while by (\ref{l3-1}) the same holds $|\mu|$-a.e.\ on $D$, see \cite[Lemma~2.3.1]{F1}.
Integrating (\ref{hatg}) with respect to $\mu^\pm$, we therefore obtain by subtraction
\begin{equation}\label{l1}\infty>E_g(\mu)=E_\alpha(\mu-\mu',\mu).\end{equation}
As $U_\alpha^{\mu-\mu'}=0$ n.e.\ on $D^c$ by (\ref{bal-eq}), while $\mu'$ is $c_\alpha$-absolutely continuous,
we also have
\begin{equation}\label{l2}E_\alpha(\mu-\mu',\mu')=0,\end{equation}
which results in the former equality in (\ref{eq1-2-hen}) when
combined with (\ref{l1}). In view of (\ref{l3-2}), (\ref{l2}) takes
the form $\|\mu'\|^2_\alpha=E_\alpha(\mu,\mu')$, and the former
equality in (\ref{eq1-2-hen}) therefore implies the
latter.\end{proof}

\begin{lemma}\label{l-hen'-comp}
Assume that\/ $\mu\in\mathfrak M(D)$ has compact support\/
$S^\mu_D$. Then\/ $\mu\in\mathcal E_g(D)$ if and only if its
extension belongs to\/ $\mathcal E_\alpha(\mathbb R^n)$.\footnote{If
the measure in question is positive, then Lemma~\ref{l-hen'-comp}
can be generalized to any bounded $\mu\in\mathfrak M^+(D)$ such that
the Euclidean distance between $S^\mu_D$ and $\partial D$ is ${}>0$,
see \cite[Lemma~3.4]{FZ-Pot}.}\end{lemma}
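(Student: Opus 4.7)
The ``if'' direction is already the content of (\ref{l3-1}) in Lemma~\ref{l-hen'}, so the only new content is the converse. My plan for the ``only if'' direction rests on the observation that, since $S^\mu_D$ is compact and contained in the open set $D$, the Euclidean distance $d:=\mathrm{dist}(S^\mu_D,D^c)$ is strictly positive, which allows one to bound the Riesz kernel on $S^\mu_D\times S^\mu_D$ by the Green kernel plus a constant.

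First I would verify the extendibility of $\mu$: having compact support $K:=S^\mu_D\subset D$, the measure $\mu$ is bounded, so $\mu^+$ and $\mu^-$ admit extensions $\widehat{\mu^\pm}\in\mathfrak M^+(\mathbb R^n)$ supported by $K$, and condition (\ref{1.3.10}) is trivial because the integrands vanish outside the compact set $K$. Thus $\widehat\mu=\widehat{\mu^+}-\widehat{\mu^-}$ is well-defined with compact support $K$.

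Next I would exploit the defining identity of the Green kernel together with the integral representation (\ref{int-repr}) to write, for all $x,y\in D$,
\[
g^\alpha_D(x,y)=\kappa_\alpha(x,y)-U_\alpha^{\varepsilon_y'}(x).
\]
For $x,y\in K$ the sweeping $\varepsilon_y'$ is carried by $D^c$, so every $z\in S(\varepsilon_y')$ satisfies $|x-z|\geqslant d$, and hence $\kappa_\alpha(x,z)\leqslant d^{\alpha-n}$. Combining this with the bound $\varepsilon_y'(\mathbb R^n)\leqslant 1$ from the footnote to Theorem~\ref{bal-mass-th} yields $U_\alpha^{\varepsilon_y'}(x)\leqslant d^{\alpha-n}$ uniformly in $x,y\in K$, and therefore
\[
\kappa_\alpha(x,y)\leqslant g^\alpha_D(x,y)+d^{\alpha-n}\quad\text{for all }x,y\in K.
\]

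Finally, I would integrate this pointwise inequality against $|\mu|\otimes|\mu|$, which is a positive measure carried by $K\times K$. Since $|\mu|(K)<\infty$ by boundedness, this gives
\[
E_\alpha(\widehat\mu)=E_\alpha(|\mu|)\leqslant E_g(|\mu|)+d^{\alpha-n}\,|\mu|(K)^2<\infty,
\]
using the hypothesis $E_g(\mu)<\infty$, i.e., $E_g(|\mu|)<\infty$. Hence $\widehat\mu\in\mathcal E_\alpha(\mathbb R^n)$, as required. There is no real obstacle here; the only point deserving attention is the mass bound $\varepsilon_y'(\mathbb R^n)\leqslant 1$, which holds without any thinness hypothesis on $D^c$ and is the precise reason a single constant $d^{\alpha-n}$ suffices uniformly in $y\in K$.
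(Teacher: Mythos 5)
Your proof is correct, but it takes a genuinely different and more elementary route than the paper's. The paper reduces to positive $\mu$, invokes the continuity of $U_\alpha^{\mu'}$ on $D$ to deduce $E_\alpha(\mu,\mu')<\infty$ from compactness of $S_D^\mu$, and then combines this with the identity $E_g(\mu)=E_\alpha(\mu-\mu',\mu)$ (relation (\ref{l1}) from the proof of Lemma~\ref{l-hen'}) to extract $E_\alpha(\mu)<\infty$. You instead derive a pointwise kernel domination $\kappa_\alpha(x,y)\leqslant g^\alpha_D(x,y)+d^{\alpha-n}$ on $K\times K$ directly from the defining identity $g^\alpha_D(x,y)=\kappa_\alpha(x,y)-U_\alpha^{\varepsilon_y'}(x)$, the universal mass bound $\varepsilon_y'(\mathbb R^n)\leqslant1$, and the distance estimate $|x-z|\geqslant d$ for $z\in D^c$ --- then integrate against $|\mu|\otimes|\mu|$. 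Both arguments hinge on the same geometric fact (the support sits compactly inside $D$, so $U_\alpha^{\varepsilon_y'}$ is uniformly bounded there), but yours makes the bound quantitative and self-contained, dispenses with the continuity-of-balayage-potential input and the energy identity machinery, and transparently explains the footnote's generalization to bounded positive $\mu$ with $\mathrm{dist}(S_D^\mu,\partial D)>0$, since compactness was only used for the distance bound and extendibility. One minor notational slip: $E_\alpha(\widehat\mu)=E_\alpha(|\mu|)$ is not a literal equality for signed $\mu$; what you actually show (and what is needed) is $E_\alpha(|\widehat\mu|)=E_\alpha(|\mu|)<\infty$, which is precisely the defining condition for $\widehat\mu\in\mathcal E_\alpha(\mathbb R^n)$. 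Also, the integral representation (\ref{int-repr}) you invoke is not actually used in your argument --- you work directly with the sweeping of the Dirac measures in the definition of $g$, which is all you need.
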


\begin{proof}According to Lemma~\ref{l-hen'}, it is enough to establish the necessity part of the lemma.
We may clearly assume that $\mu$ is positive. Since $U_\alpha^{\mu'}$ is continuous on $D$, and hence bounded
on the compact set $S_D^{\mu}$, we have
\begin{equation}\label{comp-f-e}E_\alpha(\mu,\mu')<\infty.\end{equation} On the other hand,
$E_g(\mu)$ is finite by assumption, and hence likewise as in the
preceding proof relation (\ref{l1}) holds. Combining (\ref{l1}) with
(\ref{comp-f-e}) yields $\mu\in\mathcal E_\alpha(\mathbb R^n)$.
\end{proof}

\begin{remark}\label{rem:in}The proof of Lemma~\ref{l-hen'} uses substantially the requirement
$\mu\in\mathcal E_\alpha(\mathbb R^n)$. Being founded on the weaker
assumption $\mu\in\mathcal E_g(D)$, a similar assertion in
\cite{DFHSZ} (see Lemma~2.4 there) was incorrect, as will be shown
by Example~\ref{counterex} below. The revision of \cite{DFHSZ}
provided in present paper is based significantly on the current
version of Lemma~\ref{l-hen'} as well as on the perfectness of the
kernel $g^\alpha_D$, discovered recently in
\cite[Theorem~4.11]{FZ}.\end{remark}

\section{Minimum $\alpha$-Riesz energy problems for generalized condensers}\label{sec:pr1}

\subsection{A generalized condenser}\label{sec:gen1}Under the (permanent) assumptions stated at the beginning
of Section~\ref{sec:RG}, fix a (not necessarily proper) subset $A_1$
of $D$ which is relatively closed in $D$. The pair $\mathbf
A=(A_1,A_2)$, where $A_2:=D^c$, is said to form a {\it generalized
condenser\/} in $\mathbb R^n$, and $A_1$ and $A_2$ are termed its
{\it positive\/} and {\it negative plates\/}.\footnote{The notion of
generalized condenser thus defined differs from that introduced in
our recent work \cite{DFHSZ2}; cf.\ Remark~\ref{J} below.} To avoid
triviality, we shall always require that $c_\alpha(A_1)>0$, and
hence
\begin{equation}\label{nonzero'}c_\alpha(A_i)>0\text{ \ for\ }i=1,2.\end{equation}
The generalized condenser $\mathbf A=(A_1,A_2)$ is said to be {\it standard\/} if $A_1$ is closed in
$\mathbb R^n$.

\begin{example}\label{ex-1}Let $A_1=B(0,r)=D$, $r\in(0,\infty)$. Then $\mathbf A=(A_1,A_2)$ is a generalized
condenser in $\mathbb R^n$, which certainly is not standard. See
Example~\ref{ex-2} for constraints under which the constrained
minimum $\alpha$-Riesz energy problem (Problem~\ref{prR}) for such
$\mathbf A$ admits a solution (has no short-circ\-uit) despite the
fact that $A_2\cap\mathrm{Cl}_{\mathbb R^n}A_1=S(0,r)$.
\end{example}

Unless explicitly stated otherwise, in all that follows $\mathbf
A=(A_1,A_2)$ is assumed to be a generalized condenser in $\mathbb
R^n$. We emphasize that, though $A_1\cap A_2=\varnothing$, the set
$A_2\cap\mathrm{Cl}_{\mathbb R^n}A_1$ may have nonzero
$\alpha$-Riesz capacity and may even coincide with the
whole~$\partial D$.

Let $\mathfrak M(\mathbf A;\mathbb R^n)$ consist of all (signed
Radon) measures on $\mathbb R^n$ whose positive and negative parts
in the Hahn--Jor\-dan decomposition are carried by $A_1$ and $A_2$,
respectively, and let $\mathcal E_\alpha(\mathbf A;\mathbb
R^n):=\mathfrak M(\mathbf A;\mathbb R^n)\cap\mathcal
E_\alpha(\mathbb R^n)$. For any vector $\mathbf a=(a_1,a_2)$ with
$a_1,a_2>0$ write
\[\mathcal E_\alpha(\mathbf A,\mathbf a;\mathbb R^n):=
\bigl\{\mu\in\mathcal E_\alpha(\mathbf A;\mathbb R^n): \
\mu^+(A_1)=a_1, \ \mu^-(A_2)=a_2\bigr\}.\] This class is nonempty,
which is clear from (\ref{nonzero'}) by \cite[Lemma~2.3.1]{F1}, and
it therefore makes sense to consider the problem on the existence of
$\lambda_{\mathbf A}\in\mathcal E_\alpha(\mathbf A,\mathbf a;\mathbb
R^n)$ with
\begin{equation}\label{pr-cond}\|\lambda_{\mathbf A}\|^2_\alpha=
w_\alpha(\mathbf A,\mathbf a):=
\inf_{\mu\in\mathcal E_\alpha(\mathbf A,\mathbf a;\mathbb R^n)}\,\|\mu\|^2_\alpha.\end{equation}
This problem will be referred to as the {\it condenser problem\/}. By the (strict) positive definiteness of
the kernel $\kappa_\alpha$,
\[w_\alpha(\mathbf A,\mathbf a)\geqslant0.\]

\begin{remark}Assume for a moment that $\mathbf A$ is a standard condenser in $\mathbb R^n$. If moreover
it possesses the separation property
\begin{equation}\label{dist}\inf_{(x,y)\in A_1\times A_2}\,|x-y|>0,\end{equation}
then the assumption
\begin{equation}\label{cap-f}c_\alpha(A_i)<\infty\text{ \ for\ }i=1,2\end{equation}
is sufficient for problem (\ref{pr-cond}) to be (uniquely) solvable
for every normalizing vector $\mathbf a$. See e.g.\ \cite{ZPot2}
where this result has actually been established even for infinite
dimensional vector measures in the presence of a vector-valued
external field and for an arbitrary perfect kernel on a locally
compact space. However, if (\ref{cap-f}) fails to hold, then in
general there exists a vector $\mathbf a'$ such that the
corresponding extremal value $w_\alpha(\mathbf A,\mathbf a')$ is
{\it not\/} an actual minimum, see \cite{ZPot2}.\footnote{In the
case of the $\alpha$-Riesz kernels of order $1<\alpha\leqslant2$ on
$\mathbb R^3$ some of the (theoretical) results on the solvability
or unsolvability of the condenser problem, mentioned in
\cite{ZPot2}, have been illustrated in \cite{HWZ,OWZ} by means of
numerical experiments.} Therefore it was interesting to give a
description of the set of all vectors $\mathbf a$ for which the
condenser problem nevertheless is solvable. Such a characterization
has been established in \cite{ZPot3}. On the other hand,  if the
separation condition (\ref{dist}) is omitted, then the approach
developed in \cite{ZPot2,ZPot3} breaks down and (\ref{cap-f}) does
not guarantee anymore the existence of a solution to problem
(\ref{pr-cond}). This has been illustrated by
\cite[Theorem~4.6]{DFHSZ2} pertaining to the Newtonian
kernel.\end{remark}

The following theorem shows that for a generalized condenser $\mathbf A$ the condenser problem in general
has no solution. Denote $\mathbf 1:=(1,1)$.

\begin{theorem}\label{pr1uns}If\/ $A_2$ is not\/ $\alpha$-thin at infinity and\/
$c_g(A_1)=\infty$, then
\[w_\alpha(\mathbf A,\mathbf 1)=\bigl[c_g(A_1)\bigr]^{-1}=0.\]
Hence, $w_\alpha(\mathbf A,\mathbf 1)$ cannot be an actual minimum because\/
$0\notin\mathcal E_\alpha(\mathbf A,\mathbf 1;\mathbb R^n)$.\end{theorem}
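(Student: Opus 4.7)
The plan is to bracket $w_\alpha(\mathbf A,\mathbf 1)$ between the trivial lower bound $0$ (from strict positive definiteness of $\kappa_\alpha$) and $[c_g(A_1)]^{-1}=0$ from above, the latter by exhibiting, for every compact $K\subset A_1$ with $c_g(K)\in(0,\infty)$, an explicit element $\mu_K\in\mathcal E_\alpha(\mathbf A,\mathbf 1;\mathbb R^n)$ whose $\alpha$-Riesz energy is exactly $[c_g(K)]^{-1}$. Equation~(\ref{compact}) applied to the perfect kernel $g$ then finishes the upper estimate, and non-attainment will follow from strict positive definiteness together with the mass constraint $\mu^+(A_1)=1$.

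Concretely, fix a compact $K\subset A_1$ with $c_g(K)\in(0,\infty)$; such sets exist and exhaust $c_g(A_1)$ thanks to (\ref{compact-fin}) and (\ref{compact}). By perfectness of $g=g_D^\alpha$ (Remark~\ref{rem:clas}) and the discussion in Remark~\ref{remark}, there is a unique $g$-capacitary measure $\lambda_K\in\mathcal E_g^+(K,1;D)$ with $\|\lambda_K\|_g^2=[c_g(K)]^{-1}$. Since $\lambda_K$ is supported by the compact set $K\subset D$, Lemma~\ref{l-hen'-comp} shows that its extension by zero belongs to $\mathcal E_\alpha(\mathbb R^n)$, so in particular $\lambda_K$ is extendible and its $\alpha$-Riesz balayage $\lambda_K'$ onto $D^c=A_2$ is defined; because $A_2$ is not $\alpha$-thin at infinity, Theorem~\ref{bal-mass-th} gives $\lambda_K'(\mathbb R^n)=\lambda_K(\mathbb R^n)=1$. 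Set $\mu_K:=\lambda_K-\lambda_K'$. Since $\lambda_K$ is carried by $A_1$ and $\lambda_K'$ by $A_2$, and $A_1\cap A_2=\varnothing$, the Hahn--Jordan parts of $\mu_K$ are precisely $\lambda_K$ and $\lambda_K'$, so $\mu_K\in\mathcal E_\alpha(\mathbf A,\mathbf 1;\mathbb R^n)$. Applying Lemma~\ref{l-hen'} to (the extension of) $\lambda_K$ then yields
\[\|\mu_K\|_\alpha^2=\|\lambda_K-\lambda_K'\|_\alpha^2=\|\lambda_K\|_g^2=[c_g(K)]^{-1}.\]

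Consequently $w_\alpha(\mathbf A,\mathbf 1)\leqslant[c_g(K)]^{-1}$ for every such $K$; letting $c_g(K)\to c_g(A_1)=\infty$ via (\ref{compact}) gives $w_\alpha(\mathbf A,\mathbf 1)\leqslant[c_g(A_1)]^{-1}=0$, while the reverse inequality is immediate from strict positive definiteness of $\kappa_\alpha$, so equality holds. If the infimum were attained by some $\lambda\in\mathcal E_\alpha(\mathbf A,\mathbf 1;\mathbb R^n)$, strict positive definiteness would force $\lambda=0$, contradicting $\lambda^+(A_1)=1$; hence $w_\alpha(\mathbf A,\mathbf 1)$ is not an actual minimum. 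The main obstacle is precisely the transfer between the $\alpha$-Green setting on $D$ and the $\alpha$-Riesz setting on $\mathbb R^n$: one needs admissible measures whose $g$-energy nearly realizes $[c_g(A_1)]^{-1}$ \emph{and} whose $\alpha$-Riesz extension has finite energy (so that Lemma~\ref{l-hen'} applies), together with mass conservation under balayage. Restricting to compact $K\subset A_1$ is exactly what makes Lemma~\ref{l-hen'-comp} and Theorem~\ref{bal-mass-th} available simultaneously, and this is why both hypotheses of the theorem enter essentially.
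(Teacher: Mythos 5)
Your proposal is correct and follows essentially the same approach as the paper's own proof: exhaust $A_1$ by compacts, take the $g$-capacitary measure on each compact, pass to the Riesz setting via Lemma~\ref{l-hen'-comp} and Lemma~\ref{l-hen'}, use Theorem~\ref{bal-mass-th} for mass conservation of the balayage, and let the $g$-capacities tend to infinity via~(\ref{compact}). The non-attainment argument via strict positive definiteness is also the one the paper uses.
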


\begin{proof} Consider an exhaustion of $A_1$ by an increasing sequence $\{K_j\}_{j\in\mathbb N}$ of
compact sets. By~(\ref{compact}),
\begin{equation}\label{incr}c_g(K_j)\uparrow c_g(A_1)=\infty\text{ \ as\ }j\to\infty,\end{equation}
and there is therefore no loss of generality in assuming that every
$c_g(K_j)$ is ${}>0$. Furthermore, since the $\alpha$-Green kernel
$g$ is strictly positive definite and moreover perfect
(Remark~\ref{rem:clas}), we see from (\ref{compact-fin}) that
$c_g(K_j)<\infty$ and hence, by Remark~\ref{remark}, there exists a
(unique) $g$-capacitary measure $\lambda_j$ on $K_j$, i.e.\
$\lambda_j\in\mathcal E_g^+(K_j,1;D)$ with
\[\|\lambda_j\|^2_g=1/c_g(K_j)<\infty.\]
According to Lemma~\ref{l-hen'-comp}, $E_\alpha(\lambda_j)$ is
finite along with $E_g(\lambda_j)$ and hence, by Lemma~\ref{l-hen'},
\[\|\lambda_j\|^2_g=\|\lambda_j-\lambda_j'\|^2_\alpha.\]
As $A_2$ is not $\alpha$-thin at infinity, we see from
Theorem~\ref{bal-mass-th} that $\lambda_j-\lambda_j'\in\mathcal
E_\alpha(\mathbf A,\mathbf 1;\mathbb R^n)$, which together with the
two preceding displays yields
\[1/c_g(K_j)=\|\lambda_j\|^2_g=\|\lambda_j-\lambda_j'\|^2_\alpha\geqslant
w_\alpha(\mathbf A,\mathbf 1)\geqslant0.\] Letting here
$j\to\infty$, we obtain the theorem from (\ref{incr}).
\end{proof}

Using the electrostatic interpretation, which is possible for the
Coulomb kernel $|x-y|^{-1}$ on $\mathbb R^3$, we say that under the
hypotheses of Theorem~\ref{pr1uns} a short-circ\-uit occurs between
the oppositely charged plates of the generalized condenser $\mathbf
A$. It is therefore meaningful to ask what kinds of additional
requirements on the objects in question will prevent this blow-up
effect, and secure that a solution to the corresponding minimum
$\alpha$-Riesz energy problem does exist. To this end we have
succeeded in working out a substantive theory by imposing a suitable
upper constraint on the measures under consideration, thereby
rectifying the results on the constrained $\alpha$-Riesz energy
problem announced in \cite{DFHSZ}, cf.\ Remark~\ref{rem:in} above.

\subsection{A constrained $f$-weighted minimum $\alpha$-Riesz energy problem for a generalized
condenser}\label{sec-def} In the rest of the paper we shall always
require that $A_2$ {\it is not\/ $\alpha$-thin at infinity\/} and
that $\mathbf a=\mathbf 1$. When speaking of an external field $f$,
see Section~\ref{sec:princ}, we shall tacitly assume that either of
the following Case~I or Case~II holds:
\begin{itemize}
\item[\rm I.] {\it $f\in\Psi(\mathbb R^n)$ and moreover}
\begin{equation}\label{fp}f=0\text{\it \  n.e.\ on\ } A_2;\end{equation}
\item[\rm II.] {\it $f=U_{\alpha}^{\zeta-\zeta'}$, where\/ $\zeta$ is a signed extendible Radon measure
on\/ $D$ with\/ $E_\alpha(\zeta)<\infty$}.
\end{itemize}
Note that relation (\ref{fp}) holds also in Case II, see (\ref{bal-eq}). Since a set with
$c_\alpha(\cdot)=0$ carries no measure with finite $\alpha$-Riesz energy \cite[Lemma~2.3.1]{F1}, we thus see
that in either Case~I or Case~II {\it no external field acts on the measures from\/}
$\mathcal E^+_\alpha(A_2;\mathbb R^n)$. The $f$-weighted $\alpha$-Riesz energy $G_{\alpha,f}(\mu)$,
cf.\ (\ref{we}), of $\mu\in\mathcal E_\alpha(\mathbf A;\mathbb R^n)$ can therefore be defined as
\begin{equation}\label{EG}G_{\alpha,f}(\mu):=\|\mu\|^2_\alpha+2\langle f,\mu\rangle=
\|\mu\|^2_\alpha+2\langle f,\mu^+\rangle.\end{equation}
If Case II takes place, then for every $\mu\in\mathcal E_\alpha(\mathbf A;\mathbb R^n)$ we moreover get
\begin{align}\label{GCII}\infty>G_{\alpha,f}(\mu)&=\|\mu\|^2_\alpha+2E_\alpha(\zeta-\zeta',\mu)\\
{}&=\|\mu+\zeta-\zeta'\|^2_\alpha-\|\zeta-\zeta'\|_\alpha^2\geqslant-\|\zeta-\zeta'\|_\alpha^2>-\infty.
\notag\end{align} Thus in either Case I or Case II,
\begin{equation}\label{Glowerb}G_{\alpha,f}(\mu)\geqslant-M>-\infty\text{ \ for all\ }
\mu\in\mathcal E_\alpha(\mathbf A;\mathbb R^n).\end{equation}
Indeed, in Case I this is obvious by (\ref{EG}), while in Case~II it follows from~(\ref{GCII}).

By a {\it constraint\/} for measures from $\mathcal
E^+_\alpha(A_1,1;\mathbb R^n)$ we mean any $\xi$ such that
\begin{equation}\label{constr1}\xi\in\mathcal E^+_\alpha(A_1;\mathbb R^n)\text{ \ and \ }\xi(A_1)>1.
\end{equation}
Let $\mathfrak C(A_1;\mathbb R^n)$ consist of all such constraints.
Given $\xi\in\mathfrak C(A_1;\mathbb R^n)$, write
\[\mathcal E_\alpha^\xi(\mathbf A,\mathbf 1;\mathbb R^n):=
\bigl\{\mu\in\mathcal E_\alpha(\mathbf A,\mathbf 1;\mathbb R^n): \
\mu^+\leqslant\xi\bigr\},\] where $\mu^+\leqslant\xi$ means that
$\xi-\mu^+\geqslant0$. Note that {\it we do not impose any
constraint on the negative parts of measures\/} $\mu\in\mathcal
E_\alpha(\mathbf A,\mathbf 1;\mathbb R^n)$. If
\[\mathcal E_{\alpha,f}^\xi(\mathbf A,\mathbf 1;\mathbb R^n):=
\mathcal E_\alpha^\xi(\mathbf A,\mathbf 1;\mathbb R^n)\cap\mathcal
E_{\alpha,f}(\mathbb R^n)\ne\varnothing\] (see
Section~\ref{sec:princ} for the definition of the class $\mathcal
E_{\alpha,f}(\mathbb R^n)$), or equivalently if\footnote{If
(\ref{cggen}) is fulfilled, then $G_{\alpha,f}^{\xi}(\mathbf
A,\mathbf 1;\mathbb R^n)$ is actually finite, see
(\ref{Glowerb}).\label{foot-G-finite}}
\begin{equation}\label{cggen}G_{\alpha,f}^{\xi}(\mathbf A,\mathbf 1;\mathbb R^n):=
\inf_{\mu\in\mathcal E_{\alpha,f}^\xi(\mathbf A,\mathbf 1;\mathbb R^n)}\,G_{\alpha,f}(\mu)<\infty,
\end{equation}
then the following {\it constrained\/ $f$-weighted minimum\/ $\alpha$-Riesz energy problem\/} makes sense.

\begin{problem}\label{prR}Does there exist
$\lambda_{\mathbf A}^\xi\in\mathcal E_{\alpha,f}^\xi(\mathbf A,\mathbf 1;\mathbb R^n)$ with
\begin{equation}\label{inf2}G_{\alpha,f}(\lambda_{\mathbf A}^\xi)=
G_{\alpha,f}^{\xi}(\mathbf A,\mathbf 1;\mathbb R^n)\,?\end{equation}
\end{problem}

Conditions which guarantee (\ref{cggen}) are provided by the following Lemma~\ref{suff-fin}. Write
\begin{equation}\label{Acirc}A_1^\circ:=\bigl\{x\in A_1: \ |f(x)|<\infty\bigr\}.\end{equation}

\begin{lemma}\label{suff-fin}
Relation\/ {\rm(\ref{cggen})} holds if either Case\/~{\rm II} takes place, or\/ {\rm(}in the presence of
Case\/~{\rm I}{\rm)} if
\begin{equation}\label{acirc}\xi(A_1^\circ)>1.\end{equation}
\end{lemma}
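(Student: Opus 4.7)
The plan is to prove (\ref{cggen}) by exhibiting an explicit admissible measure $\mu=\mu^+-\mu^-\in\mathcal E_{\alpha,f}^\xi(\mathbf A,\mathbf 1;\mathbb R^n)$; once such a $\mu$ is constructed, we get $G_{\alpha,f}^{\xi}(\mathbf A,\mathbf 1;\mathbb R^n)\leqslant G_{\alpha,f}(\mu)<\infty$, and by footnote~\ref{foot-G-finite} this is what needs to be checked.

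For the positive part, the idea is to take a renormalized restriction of the constraint $\xi$ itself. In Case~II, inner regularity of the Radon measure $\xi$ together with $\xi(A_1)>1$ (condition (\ref{constr1})) yields a compact $K\subset A_1$ with $1<\xi(K)<\infty$; set $\mu^+:=\xi|_K/\xi(K)$. In Case~I, since $f\in\Psi(\mathbb R^n)$ is nonnegative (as $\mathbb R^n$ is not compact), the sublevel sets $B_N:=A_1\cap\{f\leqslant N\}$ are $\xi$-measurable and increase to $A_1^\circ$; so hypothesis (\ref{acirc}) forces $\xi(B_N)>1$ for some $N$, and inner regularity again gives a compact $K\subset B_N$ with $1<\xi(K)<\infty$. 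Put $\mu^+:=\xi|_K/\xi(K)$. In either case $\mu^+$ is carried by $A_1$, $\mu^+(A_1)=1$, $\mu^+\leqslant\xi$ (because $\xi(K)>1$), and $E_\alpha(\mu^+)\leqslant E_\alpha(\xi)<\infty$ by domination; in Case~I one additionally has $\langle f,\mu^+\rangle\leqslant N$.

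For the negative part, the plan is to use balayage: set $\mu^-:=(\mu^+)'$, the $\alpha$-Riesz balayage of $\mu^+$ onto $A_2=D^c$. Since $\mu^+\in\mathcal E^+_\alpha(D;\mathbb R^n)$, the projection property (\ref{proj}) gives $\mu^-\in\mathcal E^+_\alpha(A_2;\mathbb R^n)$; and because $A_2$ is not $\alpha$-thin at infinity (our standing assumption in Section~\ref{sec-def}) while $\mu^+$ is bounded, Theorem~\ref{bal-mass-th} yields $\mu^-(A_2)=\mu^+(\mathbb R^n)=1$. Therefore $\mu:=\mu^+-\mu^-\in\mathcal E_\alpha^\xi(\mathbf A,\mathbf 1;\mathbb R^n)$.

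It remains to verify $G_{\alpha,f}(\mu)<\infty$. In Case~II this is immediate from (\ref{GCII}): by (\ref{l3-2}) combined with $E_\alpha(\zeta)<\infty$ we have $\zeta-\zeta'\in\mathcal E_\alpha(\mathbb R^n)$, so $G_{\alpha,f}(\mu)=\|\mu+\zeta-\zeta'\|_\alpha^2-\|\zeta-\zeta'\|_\alpha^2$ is plainly finite. In Case~I, $f\geqslant 0$ and $f=0$ n.e.\ on $A_2$ by (\ref{fp}); as $\mu^-$ has finite $\alpha$-Riesz energy, it is $c_\alpha$-absolutely continuous and therefore does not charge the exceptional set, forcing $\langle f,\mu^-\rangle=0$. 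Hence $G_{\alpha,f}(\mu)=\|\mu\|_\alpha^2+2\langle f,\mu^+\rangle\leqslant\|\mu\|_\alpha^2+2N<\infty$. The only delicate point is the simultaneous coordination in Case~I of the three requirements on $\mu^+$---domination $\mu^+\leqslant\xi$, finite $\alpha$-energy, and $f$-integrability---and this is precisely what is secured by truncating $\xi$ to the compact sublevel piece $K\subset B_N$, together with the quantitative reserve $\xi(A_1^\circ)>1$.
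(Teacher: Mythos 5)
Your proof is correct and follows essentially the same strategy as the paper: construct an explicit admissible measure whose positive part is $\xi|_K/\xi(K)$ for a suitably chosen compact $K\subset A_1^\circ$ with $\xi(K)>1$. Two minor variations are worth noting. For $\mu^-$ the paper simply takes an arbitrary element of $\mathcal E^+_\alpha(A_2,1;\mathbb R^n)$ (which is nonempty because $c_\alpha(A_2)>0$), whereas your choice $\mu^-:=(\mu^+)'$ invokes the balayage machinery and the standing assumption that $A_2$ is not $\alpha$-thin at infinity; that assumption is indeed in force, so your argument is valid, but it is heavier than needed here. For Case~II, the paper reduces to Case~I by observing that (\ref{acirc}) holds automatically there (since $U_\alpha^{\zeta-\zeta'}$ is finite $\xi$-a.e.), while you instead appeal to (\ref{GCII}) directly, noting that for any $\mu\in\mathcal E_\alpha^\xi(\mathbf A,\mathbf 1;\mathbb R^n)$ the weighted energy is automatically finite; this is a legitimate shortcut that avoids even having to coordinate the $f$-integrability of $\mu^+$ in Case~II.
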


\begin{proof}Assume first that (\ref{acirc}) holds; then there exists by (\ref{Acirc}) a compact set
$K\subset A_1^\circ$ such that $|f|\leqslant M<\infty$ on $K$ and
$\xi(K)>1$. Define $\mu=\mu^+-\mu^-$, where
$\mu^+:=\xi|_{K}\bigl/\xi(K)$ while $\mu^-$ is any measure from
$\mathcal E^+_\alpha(A_2,1;\mathbb R^n)$ (such $\mu^-$ exists
because $c_\alpha(A_2)>0$). Noting that $\xi|_K\in\mathcal
E_\alpha^+(K;\mathbb R^n)$ by (\ref{constr1}), we get
$\mu\in\mathcal E_{\alpha,f}^\xi(\mathbf A,\mathbf 1;\mathbb R^n)$,
or equivalently (\ref{cggen}). To complete the proof of the lemma,
it is left to note that (\ref{acirc}) holds automatically whenever
Case II takes place, since then $U_\alpha^{\zeta-\zeta'}$ is finite
n.e.\ on $\mathbb R^n$, hence $\xi$-a.e.\ by~(\ref{constr1}).
\end{proof}

\begin{lemma}\label{l:uniq} A solution\/ $\lambda_{\mathbf A}^\xi$ to Problem\/~{\rm\ref{prR}} is unique\/
{\rm(}whenever it exists\/{\rm)}.\end{lemma}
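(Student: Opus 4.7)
The proof of uniqueness is a standard application of the pre-Hilbert structure of $\mathcal{E}_\alpha(\mathbb{R}^n)$ together with the convexity of the admissible class $\mathcal{E}_{\alpha,f}^\xi(\mathbf{A},\mathbf{1};\mathbb{R}^n)$, exploiting strict positive definiteness of $\kappa_\alpha$. The main steps are as follows.

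First I would verify that the class $\mathcal{E}_{\alpha,f}^\xi(\mathbf{A},\mathbf{1};\mathbb{R}^n)$ is convex. Given two elements $\lambda_1,\lambda_2$ of this class, set $\mu:=(\lambda_1+\lambda_2)/2$. Because $A_1\subset D$ and $A_2=D^c$ are disjoint, the measures $\tfrac{1}{2}(\lambda_1^++\lambda_2^+)$ and $\tfrac{1}{2}(\lambda_1^-+\lambda_2^-)$ are mutually singular, so they form the Hahn--Jordan decomposition of $\mu$; thus $\mu^+$ is carried by $A_1$, $\mu^-$ is carried by $A_2$, $\mu^+(A_1)=\mu^-(A_2)=1$, and $\mu^+=\tfrac12(\lambda_1^++\lambda_2^+)\leqslant\xi$. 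Finiteness of $\|\mu\|_\alpha$ follows from $\mu\in\mathcal{E}_\alpha(\mathbb{R}^n)$ (a linear space), and $f$ is $|\mu|$-integrable (in Case~I because $f\geqslant 0$ and $\langle f,\mu^+\rangle\leqslant\tfrac12\langle f,\lambda_1^++\lambda_2^+\rangle<\infty$; in Case~II automatically, by Cauchy--Schwarz applied to $E_\alpha(\zeta-\zeta',\mu)$, see~(\ref{GCII})). Hence $\mu\in\mathcal{E}_{\alpha,f}^\xi(\mathbf{A},\mathbf{1};\mathbb{R}^n)$.

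Next I would suppose $\lambda_1,\lambda_2$ are both solutions, that is $G_{\alpha,f}(\lambda_i)=G_{\alpha,f}^\xi(\mathbf{A},\mathbf{1};\mathbb{R}^n)=:G$, and apply the parallelogram identity in the pre-Hilbert space $\mathcal{E}_\alpha(\mathbb{R}^n)$. Direct computation from (\ref{EG}) gives
\[
G_{\alpha,f}(\lambda_1)+G_{\alpha,f}(\lambda_2)-2G_{\alpha,f}\!\left(\tfrac{\lambda_1+\lambda_2}{2}\right)=\|\lambda_1\|_\alpha^2+\|\lambda_2\|_\alpha^2-\tfrac{1}{2}\|\lambda_1+\lambda_2\|_\alpha^2=\tfrac{1}{2}\|\lambda_1-\lambda_2\|_\alpha^2,
\]
the linear terms in $f$ cancelling. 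Since $(\lambda_1+\lambda_2)/2$ lies in the admissible class by the first step, the left-hand side is $\leqslant 2G-2G=0$, so $\|\lambda_1-\lambda_2\|_\alpha^2\leqslant 0$.

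Finally, by strict positive definiteness of $\kappa_\alpha$ (Remark~\ref{rem:clas}), $\|\lambda_1-\lambda_2\|_\alpha=0$ forces $\lambda_1=\lambda_2$, completing the proof. There is no genuine obstacle; the only point requiring care is the bookkeeping in step one, namely that the disjointness of the plates guarantees that the Hahn--Jordan decomposition of $\mu$ really is the arithmetic mean of those of $\lambda_1$ and $\lambda_2$, so that both the carrier conditions and the constraint $\mu^+\leqslant\xi$ pass to convex combinations.
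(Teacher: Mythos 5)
Your proof is correct and follows essentially the same route as the paper's: convexity of the admissible class combined with the parallelogram identity in the pre-Hilbert space $\mathcal E_\alpha(\mathbb R^n)$. The only difference is that you spell out the verification that $\mathcal E_{\alpha,f}^\xi(\mathbf A,\mathbf 1;\mathbb R^n)$ is convex (in particular, that the Hahn--Jordan decomposition of the midpoint is the arithmetic mean of those of $\lambda_1$ and $\lambda_2$ because $A_1$ and $A_2$ are disjoint), a point the paper leaves implicit when it uses $4G_{\alpha,f}^\xi(\mathbf A,\mathbf 1;\mathbb R^n)\leqslant 4G_{\alpha,f}\bigl(\tfrac{\lambda+\breve\lambda}{2}\bigr)$.
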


\begin{proof}This can be established by standard methods based on the convexity of the class
$\mathcal E_{\alpha,f}^\xi(\mathbf A,\mathbf 1;\mathbb R^n)$ and the
pre-Hil\-bert structure on the space $\mathcal E_\alpha(\mathbb
R^n)$. Indeed, if $\lambda$ and $\breve{\lambda}$ are two solutions
to Problem~\ref{prR}, then
\[4G_{\alpha,f}^\xi(\mathbf A,\mathbf 1;\mathbb R^n)\leqslant
4G_{\alpha,f}\Bigl(\frac{\lambda+\breve{\lambda}}{2}\Bigr)=
\|\lambda+\breve{\lambda}\|_\alpha^2+4\langle f,\lambda+\breve{\lambda}\rangle.\]
On the other hand, applying the parallelogram identity in $\mathcal E_\alpha(\mathbb R^n)$ to $\lambda$ and
$\breve{\lambda}$ and then adding and
subtracting $4\langle f,\lambda+\breve{\lambda}\rangle$ we get
\[\|\lambda-\breve\lambda\|_\alpha^2=
-\|\lambda+\breve{\lambda}\|_\alpha^2-4\langle f,\lambda+\breve{\lambda}\rangle+
2G_{\alpha,f}(\lambda)+2G_{\alpha,f}(\breve{\lambda}).\]
When combined with the preceding relation, this yields
\[0\leqslant\|\lambda-\breve{\lambda}\|^2_\alpha
\leqslant-4G_{\alpha,f}^{\xi}(\mathbf A,\mathbf 1;\mathbb R^n)+2G_{\alpha,f}(\lambda)+
2G_{\alpha,f}(\breve{\lambda})=0.\]
Since $\|\cdot\|_\alpha$ is a norm, the lemma follows.\end{proof}

\section{Relations between minimum $\alpha$-Riesz and $\alpha$-Green energy problems}\label{deep}

We are keeping the (permanent) assumptions on $\mathbf A$, $f$ and $\xi$ stated in Sections~\ref{sec:gen1}
and~\ref{sec-def}. Since $\mathfrak M^+(A_1;\mathbb R^n)\subset\mathfrak M^+(A_1;D)$, the constraint $\xi$
can be thought of as an extendible measure from $\mathfrak M^+(A_1;D)$ such that its extension has finite
$\alpha$-Riesz energy (and total mass $\xi(A_1)>1$). Define
\[\mathcal E_g^\xi(A_1,1;D):=\bigl\{\mu\in\mathcal E_g^+(A_1,1;D): \ \mu\leqslant\xi\bigr\},\]
and let $\mathcal E_{g,f}^\xi(A_1,1;D)$ consist of all $\mu\in\mathcal E_g^\xi(A_1,1;D)$ such that
\begin{equation}\label{Gfdef}G_{g,f}(\mu):=G_{g,f|_D}(\mu)=\|\mu\|^2_g+2\langle f|_D,\mu\rangle\end{equation}
is finite, cf.\ (\ref{we}). We have used here the fact that
$\nu^*(D^c)=0$ for every $\nu\in\mathfrak M^+(D;\mathbb R^n)$, see
Section~\ref{sec:princ}. If the class $\mathcal
E_{g,f}^\xi(A_1,1;D)$ is nonempty, or equivalently if
\begin{equation}\label{Gfin}G_{g,f}^\xi(A_1,1;D):=
\inf_{\mu\in\mathcal E_{g,f}^\xi(A_1,1;D)}\,G_{g,f}(\mu)<\infty,\end{equation}
then the following {\it constrained\/ $f$-weighted minimum\/ $\alpha$-Green energy problem\/} makes sense.

\begin{problem}\label{prG}Does there exist $\lambda_{A_1}^\xi\in
\mathcal E_{g,f}^\xi(A_1,1;D)$ with
\begin{equation}\label{CF}G_{g,f}(\lambda_{A_1}^\xi)=G_{g,f}^{\xi}(A_1,1;D)\,?\end{equation}
\end{problem}

Based on the convexity of the class $\mathcal E_{g,f}^\xi(A_1,1;D)$
and the pre-Hilbert structure on the space $\mathcal E_g(D)$,
likewise as in the proof of Lemma~\ref{l:uniq} we see that {\it a
solution\/ $\lambda_{A_1}^\xi$ to Problem\/~{\rm\ref{prG}} is
unique\/} whenever it exists (see \cite[Lemma~4.1]{DFHSZ}).

\begin{theorem}\label{th:rel}Under the stated assumptions,
\begin{equation}\label{equality}G_{\alpha,f}^{\xi}(\mathbf A,\mathbf 1;\mathbb R^n)=G_{g,f}^{\xi}(A_1,1;D).
\end{equation}
Assume moreover that either of the\/ {\rm(}equivalent\/{\rm)} assumptions\/ {\rm(\ref{cggen})} or\/
{\rm(\ref{Gfin})} is fulfilled. Then the solution to Problem\/~{\rm\ref{prR}} exists if and only if
so does that to Problem\/~{\rm\ref{prG}}, and in the affirmative case they are related to each other
by the formula
\begin{equation}\label{reprrr}
\lambda^{\xi}_{\mathbf A}=\lambda^\xi_{A_1}-\bigl(\lambda^\xi_{A_1}\bigr)'.
\end{equation}
\end{theorem}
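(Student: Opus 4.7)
The plan is to construct a natural correspondence between admissible measures for the two problems via the map $\nu \mapsto \nu - \nu'$, and use the orthogonal projection property (\ref{proj}) to handle the reverse direction. Two preliminary observations will be used throughout: first, that in both Case~I and Case~II the external field $f$ vanishes n.e.\ on $A_2=D^c$ (for Case~II via (\ref{bal-eq})), so that $\langle f,\theta\rangle=0$ for every $c_\alpha$-absolutely continuous $\theta\in\mathcal E_\alpha^+(D^c;\mathbb R^n)$ (in Case~II using also that $\theta$ has finite $\alpha$-Riesz energy); consequently, for every $\mu\in\mathcal E_\alpha(\mathbf A;\mathbb R^n)$ we have $\langle f,\mu\rangle=\langle f,\mu^+\rangle=\langle f|_D,\mu^+\rangle$, since $\mu^-$ and $\mu^+$ are $c_\alpha$-absolutely continuous and carried by $A_2$ and $A_1\subset D$, respectively. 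Second, since $A_2$ is not $\alpha$-thin at infinity, Theorem~\ref{bal-mass-th} ensures that balayage preserves total mass for bounded positive measures.

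\textbf{Green to Riesz direction.} Pick $\nu\in\mathcal E_{g,f}^\xi(A_1,1;D)$. Because $\nu\leqslant\xi$ and $E_\alpha(\xi)<\infty$, the extension of $\nu$ by $0$ off $D$ belongs to $\mathcal E_\alpha^+(A_1;\mathbb R^n)$. Lemma~\ref{l-hen'} then gives $\nu-\nu'\in\mathcal E_\alpha(\mathbb R^n)$ with $\|\nu-\nu'\|_\alpha^2=\|\nu\|_g^2$, while Theorem~\ref{bal-mass-th} yields $\nu'(\mathbb R^n)=1$. Since $\nu'$ is supported by $D^c=A_2$ and $\nu$ by $A_1$, this means $\nu-\nu'\in\mathcal E_\alpha(\mathbf A,\mathbf 1;\mathbb R^n)$ and $(\nu-\nu')^+=\nu\leqslant\xi$. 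Combined with the first observation, this gives $G_{\alpha,f}(\nu-\nu')=\|\nu\|_g^2+2\langle f|_D,\nu\rangle=G_{g,f}(\nu)<\infty$, so $\nu-\nu'$ is admissible for Problem~\ref{prR}; taking the infimum yields $G_{\alpha,f}^\xi(\mathbf A,\mathbf 1;\mathbb R^n)\leqslant G_{g,f}^\xi(A_1,1;D)$.

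\textbf{Riesz to Green direction via projection.} Conversely, for $\mu\in\mathcal E_{\alpha,f}^\xi(\mathbf A,\mathbf 1;\mathbb R^n)$, the positive part $\mu^+$ lies in $\mathcal E_\alpha^+(A_1;\mathbb R^n)$ with $\mu^+(A_1)=1$ and $\mu^+\leqslant\xi$; Lemma~\ref{l-hen'} places it in $\mathcal E_g^\xi(A_1,1;D)$, and the finiteness of $\langle f|_D,\mu^+\rangle$ (inherited from $\mu\in\mathcal E_{\alpha,f}$ via the first observation) shows $\mu^+\in\mathcal E_{g,f}^\xi(A_1,1;D)$. Applying the projection inequality (\ref{proj}) with $\theta=\mu^-\in\mathcal E_\alpha^+(D^c;\mathbb R^n)$ gives
\[
\|\mu\|_\alpha^2=\|\mu^+-\mu^-\|_\alpha^2\geqslant\|\mu^+-(\mu^+)'\|_\alpha^2=\|\mu^+\|_g^2,
\]
with strict inequality unless $\mu^-=(\mu^+)'$. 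Adding $2\langle f|_D,\mu^+\rangle$ yields $G_{\alpha,f}(\mu)\geqslant G_{g,f}(\mu^+)\geqslant G_{g,f}^\xi(A_1,1;D)$, proving the reverse inequality and hence (\ref{equality}). The correspondence of solutions now follows: if $\lambda_{A_1}^\xi$ solves Problem~\ref{prG}, the first direction produces an admissible $\lambda_{A_1}^\xi-(\lambda_{A_1}^\xi)'$ for Problem~\ref{prR} achieving the infimum, which by uniqueness (Lemma~\ref{l:uniq}) must equal $\lambda_{\mathbf A}^\xi$, giving (\ref{reprrr}); and if $\lambda_{\mathbf A}^\xi$ solves Problem~\ref{prR}, the second direction shows $(\lambda_{\mathbf A}^\xi)^+$ solves Problem~\ref{prG}, and the strict form of (\ref{proj}) forces equality $(\lambda_{\mathbf A}^\xi)^-=((\lambda_{\mathbf A}^\xi)^+)'$. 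The main subtlety is bookkeeping: verifying in one sweep that mass, support, extendibility, the constraint $\mu^+\leqslant\xi$, and finiteness of both energy components are preserved under $\nu\leftrightarrow\nu-\nu'$, which is exactly where the standing hypotheses $E_\alpha(\xi)<\infty$, $\xi(A_1)>1$, and $A_2$ not $\alpha$-thin at infinity are all used.
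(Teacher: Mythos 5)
Your proof is correct and follows essentially the same route as the paper: one direction via the map $\nu\mapsto\nu-\nu'$ using Lemma~\ref{l-hen'} and Theorem~\ref{bal-mass-th}, the other via $\mu\mapsto\mu^+$ and the orthogonal projection inequality (\ref{proj}), with the solution correspondence then read off from the equality cases.
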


\begin{proof} We begin by establishing the inequality
\begin{equation}\label{ineq}G^\xi_{g,f}(A_1,1;D)\geqslant
G_{\alpha,f}^{\xi}(\mathbf A,\mathbf 1;\mathbb R^n).\end{equation}
Assuming $G^\xi_{g,f}(A_1,1;D)<\infty$, choose $\nu\in\mathcal E_{g,f}^\xi(A_1,1;D)$. Being bounded,
this $\nu$ is extendible. Furthermore, its extension has finite $\alpha$-Riesz energy, for so does
the extension of the constraint $\xi$ by (\ref{constr1}). Applying (\ref{eq1-2-hen}) and (\ref{Gfdef})
we get
\[G_{g,f}(\nu)=\|\nu-\nu'\|^2_\alpha+2\langle f|_D,\nu\rangle.\]
As $A_2$ is not $\alpha$-thin at infinity, we see from
Theorem~\ref{bal-mass-th} that $\theta:=\nu-\nu'\in\mathcal
E_\alpha^\xi(\mathbf A,\mathbf 1;\mathbb R^n)$. Furthermore, by
(\ref{EG}),
\[\langle f,\theta\rangle=\langle f|_D,\nu\rangle<\infty.\]
Thus $\theta\in\mathcal E^\xi_{\alpha,f}(\mathbf A,\mathbf 1;\mathbb
R^n)$ and $G_{\alpha,f}(\theta)=G_{g,f}(\nu)$, the latter relation
being valid according to the two preceding displays. This yields
\begin{equation}\label{again}G_{g,f}(\nu)=G_{\alpha,f}(\theta)\geqslant
G^{\xi}_{\alpha,f}(\mathbf A,\mathbf 1;\mathbb R^n),\end{equation}
which establishes (\ref{ineq}) by letting here $\nu$ range over
$\mathcal E^{\xi}_{g,f}(A_1,1;D)$.

On the other hand,  for any $\mu\in\mathcal E^\xi_{\alpha,f}(\mathbf
A,\mathbf 1;\mathbb R^n)$ we have $\mu^+\in\mathcal
E^+_\alpha(\mathbb R^n)$ by the definition of a signed measure of
finite energy, and hence $\mu^+\in\mathcal E_{g,f}^\xi(A_1,1;D)$ by
(\ref{l3-1}) and (\ref{EG}). Because of (\ref{proj}),
(\ref{eq1-2-hen}) and (\ref{EG}),
\begin{align}\label{hope} G_{\alpha,f}(\mu)&=
\|\mu\|^2_\alpha+2\langle f,\mu^+\rangle=\|\mu^+-\mu^-\|^2_\alpha+2\langle f,\mu^+\rangle\\
\notag{}&\geqslant\|\mu^+-(\mu^+)'\|^2_\alpha+2\langle f,\mu^+\rangle
=\|\mu^+\|^2_g+2\langle f,\mu^+\rangle\\
\notag{}&=G_{g,f}(\mu^+)\geqslant G^\xi_{g,f}(A_1,1;D).\notag\end{align}
As $\mu\in\mathcal E^\xi_{\alpha,f}(\mathbf A,\mathbf 1;\mathbb R^n)$ has been chosen arbitrarily,
this together with (\ref{ineq}) proves~(\ref{equality}).

Let now $\lambda^\xi_{A_1}\in\mathcal E_{g,f}^\xi(A_1,1;D)$ satisfy
(\ref{CF}). In the same manner as in the first paragraph of the
present proof we see that
$\breve\mu:=\lambda^\xi_{A_1}-(\lambda^\xi_{A_1})'\in\mathcal
E^\xi_{\alpha,f}(\mathbf A,\mathbf 1;\mathbb R^n)$. Substituting
$\breve\mu$ in place of $\theta$ into (\ref{again}) and then
combining the relation thus obtained with (\ref{equality}), we see
that in fact $G_{\alpha,f}(\breve\mu)=G^\xi_{\alpha,f}(\mathbf
A,\mathbf 1;\mathbb R^n)$. Hence there exists the (unique) solution
$\lambda^\xi_{\mathbf A}:=\breve\mu$ to Problem~\ref{prR}, and it is
related to $\lambda^\xi_{A_1}$ by means of formula~(\ref{reprrr}).

To complete the proof, assume next that $\lambda^\xi_{\mathbf
A}=\lambda^+-\lambda^-\in\mathcal E^\xi_{\alpha,f}(\mathbf A,\mathbf
1;\mathbb R^n)$ satisfies (\ref{inf2}). Similarly as in the second
paragraph of the present proof, we have $\lambda^+\in\mathcal
E_{g,f}^\xi(A_1,1;D)$. Furthermore, by (\ref{equality}) and
(\ref{hope}), the latter with $\lambda^{\xi}_{\mathbf A}$ in place
of~$\mu$,
\begin{align*}G^\xi_{g,f}(A_1,1;D)&=G_{\alpha,f}(\lambda^\xi_{\mathbf A})\geqslant
\|\lambda^+-(\lambda^+)'\|_\alpha^2+2\langle f,\lambda^+\rangle\\
&{}=\|\lambda^+\|^2_g+2\langle f,\lambda^+\rangle=G_{g,f}(\lambda^+)\geqslant G^\xi_{g,f}(A_1,1;D).
\end{align*}
Hence, all the inequalities in the last display are, in fact,
equalities. This shows that $\lambda^\xi_{A_1}:=\lambda^+$ solves
Problem~\ref{prG} and also, on account of (\ref{proj}), that
$\lambda^-=(\lambda^+)'=(\lambda^\xi_{A_1})'$.\end{proof}

When investigating Problem \ref{prG} we shall need the following assertion, see \cite[Lemma~4.3]{DFHSZ}.

\begin{lemma}\label{lequiv} Assume that\/ {\rm(\ref{Gfin})} holds. Then\/
$\lambda\in\mathcal E_{g,f}^\xi(A_1,1;D)$ solves
Problem\/~{\rm\ref{prG}} if and only if
\[\bigl\langle W_{g,f}^\lambda,\nu-\lambda\bigr\rangle\geqslant0\text{ \ for all \ }
\nu\in\mathcal E_{g,f}^\xi(A_1,1;D),\] where it is denoted\/
$W_{g,f}^\lambda:=W_{g,f|_D}^\lambda:=U^\lambda_g+f|_D$, cf.\/
{\rm(\ref{wp})}.
\end{lemma}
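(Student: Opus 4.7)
This is a standard variational characterization of a minimizer over a convex subset of a pre-Hilbert space, so the plan is to exploit the convexity of $\mathcal E_{g,f}^\xi(A_1,1;D)$ by forming the straight-line interpolants $\mu_t:=(1-t)\lambda+t\nu=\lambda+t(\nu-\lambda)$, $t\in[0,1]$, between $\lambda$ and an arbitrary competitor $\nu$. Before starting, I would note that the pairing $\langle W_{g,f}^\lambda,\nu-\lambda\rangle=E_g(\lambda,\nu-\lambda)+\langle f|_D,\nu-\lambda\rangle$ is well defined and finite: the mutual-energy term is finite because $\lambda,\nu\in\mathcal E_g(D)$ and the kernel $g$ is strictly positive definite (so $\mathcal E_g(D)$ is a pre-Hilbert space), while the $f$-term is finite because $\lambda,\nu\in\mathcal E_{g,f}(D)$ by hypothesis.

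\textbf{Necessity.} Suppose $\lambda$ solves Problem~\ref{prG}. For $\nu\in\mathcal E_{g,f}^\xi(A_1,1;D)$ and $t\in(0,1]$, the convex combination $\mu_t$ again lies in $\mathcal E_{g,f}^\xi(A_1,1;D)$: it has total mass $1$, is carried by $A_1$, satisfies $\mu_t\leqslant\xi$ (since both $\lambda$ and $\nu$ do), and has finite $g$-energy and finite $\langle f|_D,\mu_t\rangle$ by the pre-Hilbert inequality and linearity. A direct expansion using bilinearity of $E_g$ and linearity of $\langle f|_D,\cdot\rangle$ gives
\begin{equation*}
G_{g,f}(\mu_t)=G_{g,f}(\lambda)+2t\,\langle W_{g,f}^\lambda,\nu-\lambda\rangle+t^2\,\|\nu-\lambda\|_g^2.
\end{equation*}
Minimality of $\lambda$ forces the left-hand side to be $\geqslant G_{g,f}(\lambda)$; dividing by $t>0$ and letting $t\downarrow0$ yields the required inequality $\langle W_{g,f}^\lambda,\nu-\lambda\rangle\geqslant0$.

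\textbf{Sufficiency.} Conversely, assume the variational inequality holds for all $\nu\in\mathcal E_{g,f}^\xi(A_1,1;D)$. The same expansion, now at $t=1$, reads
\begin{equation*}
G_{g,f}(\nu)-G_{g,f}(\lambda)=2\,\langle W_{g,f}^\lambda,\nu-\lambda\rangle+\|\nu-\lambda\|_g^2\geqslant0,
\end{equation*}
since the first term is nonnegative by hypothesis and the second by strict positive definiteness of $g$ (Remark~\ref{rem:clas}). Taking the infimum over $\nu$ shows that $\lambda$ attains $G_{g,f}^\xi(A_1,1;D)$ and hence solves Problem~\ref{prG}.

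\textbf{Main obstacle.} There is essentially none of substance; the only point that requires a brief verification is that all pairings appearing in the expansion are finite and that the admissible class is stable under convex combinations (in particular, that $\mu_t\leqslant\xi$ and $\mu_t(A_1)=1$), both of which are immediate from the definition of $\mathcal E_{g,f}^\xi(A_1,1;D)$.
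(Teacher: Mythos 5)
Your proof is correct and uses the standard variational characterization of a minimizer over a convex subset of a pre-Hilbert space, relying only on the convexity of $\mathcal E_{g,f}^\xi(A_1,1;D)$, bilinearity of $E_g$, and the quadratic expansion of $G_{g,f}$ along the segment $\lambda+t(\nu-\lambda)$. The paper itself does not prove Lemma~\ref{lequiv} but cites \cite[Lemma~4.3]{DFHSZ}; the argument given there is this same convexity expansion, so your route is essentially the one the authors have in mind.

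Two small points are worth making explicit rather than leaving implicit. First, the identity $\langle W_{g,f}^\lambda,\nu-\lambda\rangle=E_g(\lambda,\nu-\lambda)+\langle f|_D,\nu-\lambda\rangle$ requires interpreting the left side as $\langle W_{g,f}^\lambda,\nu\rangle-\langle W_{g,f}^\lambda,\lambda\rangle$ and checking each term is finite: the energy parts by the Cauchy--Schwarz inequality in $\mathcal E_g(D)$ (strict positive definiteness of $g$, Remark~\ref{rem:clas}), the field parts because membership in $\mathcal E_{g,f}^\xi(A_1,1;D)$ forces $\langle f|_D,\cdot\rangle$ to be finite. You do note this in the preamble, which is enough. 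Second, in verifying $\mu_t\leqslant\xi$, the clean way is $\xi-\mu_t=(1-t)(\xi-\lambda)+t(\xi-\nu)\geqslant0$, a convex combination of positive measures; again implicit in what you wrote. With these spelled out the proof is complete.
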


\section{Main results}

We keep all the (permanent) assumptions on $\mathbf A$, $f$ and $\xi$ imposed in Sections~\ref{sec:gen1}
and~\ref{sec-def}.

\subsection{Formulations of the main results}\label{form:main} In the following Theorem~\ref{th-suff}
we require that relation (\ref{cggen}) holds; see Lemma~\ref{suff-fin} providing sufficient conditions
for this to occur.

\begin{theorem}\label{th-suff} Suppose moreover that the constraint\/ $\xi\in\mathfrak C(A_1;\mathbb R^n)$
is bounded, i.e.
\begin{equation}\label{boundd}\xi(A_1)<\infty.\end{equation}
Then in either Case\/ {\rm I} or Case\/ {\rm II} Problem\/ {\rm\ref{prR}} is\/ {\rm(}uniquely\/{\rm)}
solvable.
\end{theorem}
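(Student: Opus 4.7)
The plan is to reduce the existence part of Theorem~\ref{th-suff} to solving the $\alpha$-Green counterpart (Problem~\ref{prG}) via Theorem~\ref{th:rel}, and then to treat Problem~\ref{prG} by the standard variational method built on the perfectness of the $\alpha$-Green kernel $g$ combined with a tightness argument that rests decisively on $\xi(A_1)<\infty$. Uniqueness for Problem~\ref{prR} is Lemma~\ref{l:uniq}. Since (\ref{cggen}) is a standing hypothesis of the theorem (and is automatic in Case~II by Lemma~\ref{suff-fin}), Theorem~\ref{th:rel} ensures that $\mathcal{E}_{g,f}^\xi(A_1,1;D)\neq\varnothing$ and $G_{g,f}^\xi(A_1,1;D)$ is finite, so Problem~\ref{prG} is non-degenerate.

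For existence in Problem~\ref{prG}, I would pick a minimizing sequence $\{\nu_k\}\subset\mathcal{E}_{g,f}^\xi(A_1,1;D)$. By the convexity of this class (so that $(\nu_k+\nu_\ell)/2$ remains admissible) combined with the parallelogram identity in the pre-Hilbert space $\mathcal{E}_g(D)$, arguing exactly as in the proof of Lemma~\ref{l:uniq} one obtains
\[0\leq\|\nu_k-\nu_\ell\|_g^2\leq 2G_{g,f}(\nu_k)+2G_{g,f}(\nu_\ell)-4G_{g,f}^\xi(A_1,1;D)\longrightarrow 0,\]
so $\{\nu_k\}$ is strongly Cauchy in $\mathcal{E}_g^+(D)$.

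Next comes the decisive use of the hypothesis $\xi(A_1)<\infty$. By the inner regularity of the bounded Radon measure $\xi$, for every $\varepsilon>0$ there is a compact $K_\varepsilon\subset A_1$ with $\xi(A_1\setminus K_\varepsilon)<\varepsilon$, and since $\nu_k\leq\xi$ one has $\nu_k(A_1\setminus K_\varepsilon)\leq\varepsilon$ for every $k$. This uniform tightness, combined with the uniform total mass $\nu_k(A_1)=1$, yields along a subsequence a vague limit $\nu_0\in\mathfrak{M}^+(D)$ with $\nu_0(A_1)=1$ (no mass escapes to infinity or to $\partial D$). Since $A_1$ is relatively closed in $D$, $\nu_0$ is carried by $A_1$; the inequality $\nu_k\leq\xi$ is preserved under vague limits, giving $\nu_0\leq\xi$. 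By Theorem~\ref{fu-complete} (perfectness of $g$, see Remark~\ref{rem:clas}), the strongly Cauchy sequence converges strongly to $\nu_0$, whence $\|\nu_k\|_g^2\to\|\nu_0\|_g^2$. For the external-field term: in Case~I, $f|_D\in\Psi(D)$ and Lemma~\ref{lemma-semi} gives $\langle f,\nu_0\rangle\leq\liminf_k\langle f,\nu_k\rangle$; in Case~II, Lemmas~\ref{l-hatg} and~\ref{l-hen'} yield $f|_D=U_g^\zeta$ n.e.\ on $D$ with $\zeta\in\mathcal{E}_g(D)$, and then continuity of the inner product under strong convergence gives $\langle f,\nu_k\rangle=E_g(\zeta,\nu_k)\to E_g(\zeta,\nu_0)=\langle f,\nu_0\rangle$. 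Either way $G_{g,f}(\nu_0)\leq G_{g,f}^\xi(A_1,1;D)$, so $\nu_0=:\lambda^\xi_{A_1}$ solves Problem~\ref{prG}; applying Theorem~\ref{th:rel} produces $\lambda^\xi_{\mathbf A}:=\lambda^\xi_{A_1}-(\lambda^\xi_{A_1})'$ as the unique solution to Problem~\ref{prR}.

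The main obstacle I expect is precisely the tightness step: ensuring that the vague cluster point retains the full mass $\nu_0(A_1)=1$ rather than allowing mass to drift off to infinity or to $\partial D$. This is exactly where $\xi(A_1)<\infty$ is indispensable; were the constraint merely locally finite, the minimizing sequence could spread out unboundedly, and this is the very phenomenon that underlies the complementary sharpness result in Theorem~\ref{th-unsuff}.
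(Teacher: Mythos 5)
Your proposal follows essentially the same route as the paper's own proof of Theorem~\ref{exists} (which is what the paper reduces Theorem~\ref{th-suff} to via Theorem~\ref{th:rel}): minimizing sequence, strong Cauchy via convexity and the parallelogram law, a tightness argument using $\xi(A_1)<\infty$ to keep the full unit mass in the vague cluster point, and perfectness of $g$ to upgrade to strong convergence, with Case~I handled by vague lower semicontinuity and Case~II by strong continuity of the Green inner product. The argument is correct and matches the paper's in all essential respects.
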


Theorem~\ref{th-suff} is sharp in the sense that it does not remain valid if requirement (\ref{boundd})
is omitted from its hypotheses (see the following Theorem~\ref{th-unsuff}).

\begin{theorem}\label{th-unsuff} Condition\/ {\rm(\ref{boundd})} is actually necessary\/
{\rm(}and sufficient\/{\rm)} for the solvability of
Problem\/~{\rm\ref{prR}}. More precisely, suppose that\/
$c_\alpha(A_1)=\infty$ and that Case\/~{\rm II} holds with\/
$\zeta\geqslant0$. Then there exists a constraint\/ $\xi\in\mathfrak
C(A_1;\mathbb R^n)$ with\/ $\xi(A_1)=\infty$ such that\/
$G_{\alpha,f}^{\xi}(\mathbf A,\mathbf 1;\mathbb R^n)=0$, and hence\/
$G_{\alpha,f}^{\xi}(\mathbf A,\mathbf 1;\mathbb R^n)$ cannot be an
actual minimum.
\end{theorem}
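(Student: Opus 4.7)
The plan is to build, from the hypothesis $c_\alpha(A_1)=\infty$, a specific constraint $\xi$ as a countable sum of $\alpha$-Riesz capacitary measures on pairwise disjoint compact pieces of $A_1$ escaping to infinity, and to exhibit the corresponding ``difference measures'' $\mu_j:=\lambda_j-\lambda_j'$ as a minimizing sequence driving $G_{\alpha,f}^\xi(\mathbf A,\mathbf 1;\mathbb R^n)$ to the value $0$; non-attainment will then follow from $\zeta\geqslant 0$ via strict positive definiteness.

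First I would note that $c_\alpha(A_1)=\infty$ forces $A_1$ to be unbounded, since the closure in $\mathbb R^n$ of a bounded set is compact and thus has finite $\alpha$-Riesz capacity by (\ref{compact-fin}). Consequently $c_\alpha(A_1\cap\overline{B}(0,R))\to\infty$ as $R\to\infty$ by (\ref{compact}), and subadditivity of capacity together with (\ref{compact}) lets me pick inductively radii $0<R_0<R_1<\cdots\to\infty$ and compact sets $K_j\subset A_1\cap\{R_{j-1}\leqslant|x|\leqslant R_j\}$ with $c_\alpha(K_j)\geqslant 4^j$. Let $\lambda_j$ be the $\alpha$-Riesz capacitary measure on $K_j$ (which exists by (\ref{compact-fin}) and Remark~\ref{remark}), so $\lambda_j(K_j)=1$ and $\|\lambda_j\|_\alpha^2=1/c_\alpha(K_j)\leqslant 4^{-j}$. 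Setting $\xi:=\sum_{j=1}^\infty\lambda_j$, the pairwise disjointness of the $K_j$'s and the fact that every bounded set meets only finitely many of them ensure $\xi$ is a Radon measure carried by $A_1$ with $\xi(A_1)=\infty$. Moreover $\sum_j\|\lambda_j\|_\alpha\leqslant\sum 2^{-j}<\infty$, so the partial sums are strong Cauchy in $\mathcal E_\alpha^+(\mathbb R^n)$; by perfectness of $\kappa_\alpha$ (Remark~\ref{rem:clas}) and Theorem~\ref{fu-complete}, they converge strongly to $\xi$, proving $\xi\in\mathcal E_\alpha^+(A_1;\mathbb R^n)$ and hence $\xi\in\mathfrak C(A_1;\mathbb R^n)$.

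Next I would form $\mu_j:=\lambda_j-\lambda_j'$, with $\lambda_j'$ the $\alpha$-Riesz balayage of $\lambda_j$ onto $A_2=D^c$. By Theorem~\ref{bal-mass-th} (applicable since $A_2$ is not $\alpha$-thin at infinity and $\lambda_j$ is bounded) we have $\lambda_j'(\mathbb R^n)=1$, so $\mu_j\in\mathcal E_\alpha(\mathbf A,\mathbf 1;\mathbb R^n)$; and $\mu_j^+=\lambda_j\leqslant\xi$ by construction. Since $f=U_\alpha^{\zeta-\zeta'}$ with both $\zeta$ and $\zeta'$ in $\mathcal E_\alpha(\mathbb R^n)$, Cauchy--Schwarz in $\mathcal E_\alpha(\mathbb R^n)$ shows $f$ is integrable against $\lambda_j$ and $\lambda_j'$, so $\mu_j\in\mathcal E_{\alpha,f}^\xi(\mathbf A,\mathbf 1;\mathbb R^n)$. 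Combining Lemma~\ref{l-hen'}, giving $\|\mu_j\|_\alpha^2=\|\lambda_j\|_\alpha^2-\|\lambda_j'\|_\alpha^2\leqslant 4^{-j}$, with the Cauchy--Schwarz bound $|\langle f,\lambda_j\rangle|=|E_\alpha(\zeta-\zeta',\lambda_j)|\leqslant\|\zeta-\zeta'\|_\alpha\cdot 2^{-j}$, I obtain
\[
G_{\alpha,f}(\mu_j)=\|\mu_j\|_\alpha^2+2\langle f,\lambda_j\rangle\longrightarrow 0,
\]
hence $G_{\alpha,f}^\xi(\mathbf A,\mathbf 1;\mathbb R^n)\leqslant 0$.

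For the reverse inequality and non-attainment, the assumption $\zeta\geqslant 0$ yields $f=U_\alpha^\zeta-U_\alpha^{\zeta'}\geqslant 0$ on $\mathbb R^n$ by the standard inequality $U_\alpha^{\mu'}\leqslant U_\alpha^\mu$ for $\alpha$-Riesz balayage. Thus $G_{\alpha,f}(\mu)=\|\mu\|_\alpha^2+2\langle f,\mu^+\rangle\geqslant 0$ for every admissible $\mu$, so the infimum is exactly $0$; and equality would force $\|\mu\|_\alpha=0$ hence $\mu=0$ by strict positive definiteness of $\kappa_\alpha$, contradicting $\mu^+(A_1)=1$. The main obstacle is the simultaneous construction of $\xi$ with $\xi(A_1)=\infty$, $E_\alpha(\xi)<\infty$, local finiteness as a Radon measure, and the domination $\lambda_j\leqslant\xi$ that makes each $\mu_j$ admissible. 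This is handled by placing the ``building blocks'' $\lambda_j$ on disjoint compact pieces escaping to infinity, with geometrically growing capacity $c_\alpha(K_j)\geqslant 4^j$---a single choice that at once secures summability in the energy norm (via perfectness) and drives the minimizing sequence to~$0$.
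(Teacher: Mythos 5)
Your proposal is correct. The construction of the constraint $\xi$ is essentially the same as in the paper: compact pieces of $A_1$ escaping to infinity, carrying unit-mass measures whose $\alpha$-Riesz energy norms are summable (the paper uses arbitrary $\xi_j\in\mathcal E_\alpha^+(A_1\setminus B_{r_j},1;\mathbb R^n)$ with $\|\xi_j\|_\alpha\leqslant j^{-2}$, you use capacitary measures on pieces with $c_\alpha(K_j)\geqslant 4^j$, which is the same device). Where you diverge is the final step. The paper passes through the $\alpha$-Green energy problem: it uses (\ref{caseII}), i.e.\ $G_{g,f}(\nu)=\|\nu\|_g^2+2E_g(\zeta,\nu)\geqslant0$ for $\nu\in\mathcal E_g^+(A_1;D)$ (positivity of the kernel $g$ and of $\zeta$), shows $G_{g,f}^\xi(A_1,1;D)=0$ via Cauchy--Schwarz in $\mathcal E_g(D)$, and then invokes Theorem~\ref{th:rel} to transfer both the extremal value and the non-attainment back to the Riesz problem. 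You instead stay entirely in $\mathcal E_\alpha(\mathbb R^n)$: you exhibit the signed measures $\mu_j=\lambda_j-\lambda_j'$ as a minimizing sequence (using Lemma~\ref{l-hen'} for the energy bound and Cauchy--Schwarz in $\mathcal E_\alpha(\mathbb R^n)$ for the linear term), and for the lower bound $G_{\alpha,f}\geqslant0$ and non-attainment you invoke the pointwise positivity $f=U_\alpha^\zeta-U_\alpha^{\zeta'}\geqslant0$, which follows from $\zeta\geqslant0$ and the standard balayage inequality $U_\alpha^{\zeta'}\leqslant U_\alpha^\zeta$ on $\mathbb R^n$. Both arguments are sound and of comparable length; yours is somewhat more self-contained for this particular theorem since it bypasses Theorem~\ref{th:rel}, at the price of appealing explicitly to the balayage potential inequality (which the paper uses only implicitly through the Green-kernel framework).
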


The following three assertions establish descriptions of the
$f$-weighted $\alpha$-Riesz potential $W^{\lambda^\xi_{\mathbf
A}}_{\alpha,f}$, cf.\ (\ref{wp}), of the solution
$\lambda^\xi_{\mathbf A}$ to Problem~\ref{prR} (whenever it exists)
and single out its characteristic properties. An analysis of the
support of $\lambda^\xi_{\mathbf A}$ is also provided.

\begin{theorem}\label{desc-pot}Let assumption\/ {\rm(\ref{acirc})} hold and let\/ $f$ be lower bounded
on\/ $A_1$. Fix an arbitrary\/ $\lambda\in\mathcal
E^{\xi}_{\alpha,f}(\mathbf A,\mathbf 1;\mathbb R^n)$; such\/
$\lambda$ exists by Lemma\/~{\rm\ref{suff-fin}}. Then in either
Case\/~{\rm I} or Case\/~{\rm II} the following two assertions are
equivalent:\footnote{In Case~I the assumption of the lower
boundedness of $f$ on $A_1$ is automatically fulfilled. Furthermore,
in Case~I relation (\ref{b2}) is equivalent to the following
apparently stronger assertion: $W^\lambda_{\alpha,f}\leqslant c$ on
$S_D^{\lambda^+}$.}
\begin{itemize}
\item[{\rm(i)}] $\lambda$ is a solution to Problem\/ {\rm\ref{prR}}.
\item[{\rm(ii)}] There exists a number\/ $c\in\mathbb R$ possessing the properties
\begin{align}\label{b1}W^\lambda_{\alpha,f}&\geqslant c\quad(\xi-\lambda^+)\text{-a.e.},\\
\label{b2}W^\lambda_{\alpha,f}&\leqslant c\quad\lambda^+\text{-a.e.},
\end{align}
and in addition it holds true that
\begin{equation}\label{reprrr'}
W^\lambda_{\alpha,f}=0\text{ \ n.e. on\ }A_2.
\end{equation}
If moreover Case\/ {\rm II} holds, then relation\/ {\rm(\ref{reprrr'})} can be rewritten equivalently in
the following apparently stronger form:
\begin{equation}\label{reprrr1'}
W^\lambda_{\alpha,f}=0\text{ \ on\ }A_2\setminus I_{\alpha,A_2},
\end{equation}
where\/ $I_{\alpha,A_2}$ denotes the set of all\/ $\alpha$-irregular\/ {\rm(}boundary\/{\rm)} points
of\/ $A_2$.
\end{itemize}
\end{theorem}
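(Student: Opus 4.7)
The plan is to reduce Problem~\ref{prR} to Problem~\ref{prG} via Theorem~\ref{th:rel}, apply the variational characterization of Lemma~\ref{lequiv} to Problem~\ref{prG}, and then translate from $\alpha$-Green potentials back to $\alpha$-Riesz potentials by means of Lemma~\ref{l-hatg}. The bridge is the identity
\[W^{\lambda^+}_{g,f}=U^{\lambda^+}_g+f|_D=U^{\lambda^+-(\lambda^+)'}_\alpha+f=W^\lambda_{\alpha,f}\quad(c_\alpha\text{-n.e.\ on\ }D),\]
which is valid whenever $\lambda=\lambda^+-(\lambda^+)'$; under that normalization Lemma~\ref{lequiv} rewrites as $\langle W^\lambda_{\alpha,f},\nu-\lambda^+\rangle\geqslant0$ for all $\nu\in\mathcal E_{g,f}^\xi(A_1,1;D)$.

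For (i)$\Rightarrow$(ii): Theorem~\ref{th:rel} gives $\lambda^-=(\lambda^+)'$ and identifies $\lambda^+$ as the solution of Problem~\ref{prG}, so the above variational inequality holds. I would then invoke the classical two-constants argument: setting
\[c_-:=\inf\bigl\{t\in\mathbb R:\ (\xi-\lambda^+)(\{W^\lambda_{\alpha,f}<t\})>0\bigr\},\quad c_+:=\sup\bigl\{t\in\mathbb R:\ \lambda^+(\{W^\lambda_{\alpha,f}>t\})>0\bigr\},\]
one shows $c_+\leqslant c_-$, for otherwise one can construct $\sigma\leqslant\xi-\lambda^+$ supported where $W^\lambda_{\alpha,f}<c$ and $\tau\leqslant\lambda^+$ supported where $W^\lambda_{\alpha,f}>c$ for some $c\in(c_-,c_+)$ with $0<\sigma(A_1)=\tau(A_1)$, producing an admissible $\nu:=\lambda^++\sigma-\tau\in\mathcal E_{g,f}^\xi(A_1,1;D)$ with $\langle W^\lambda_{\alpha,f},\nu-\lambda^+\rangle<0$, contradicting Lemma~\ref{lequiv}. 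Any $c\in[c_+,c_-]$ then fulfils~(\ref{b1}) and~(\ref{b2}). The boundary relation~(\ref{reprrr'}) follows because $\lambda^-=(\lambda^+)'$ together with~(\ref{bal-eq}) give $U^\lambda_\alpha=0$ n.e.\ on $A_2$, while~(\ref{fp}) (valid in both Cases~I and~II) kills the contribution of $f$.

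The converse (ii)$\Rightarrow$(i) runs in reverse. From~(\ref{reprrr'}) and~(\ref{fp}) I obtain $U^{\lambda^+}_\alpha=U^{\lambda^-}_\alpha$ n.e.\ on $A_2$; since $\lambda^-\in\mathcal E_\alpha^+(A_2;\mathbb R^n)$ is automatically $c_\alpha$-absolutely continuous, the uniqueness clause of~(\ref{bal-eq}) forces $\lambda^-=(\lambda^+)'$, and hence $\lambda^+\in\mathcal E_{g,f}^\xi(A_1,1;D)$. For arbitrary $\nu$ in this class I decompose $\nu-\lambda^+=\sigma-\tau$ with $\sigma:=(\nu-\lambda^+)^+\leqslant\xi-\lambda^+$, $\tau:=(\nu-\lambda^+)^-\leqslant\lambda^+$ and $\sigma(A_1)=\tau(A_1)$; combining~(\ref{b1}) and~(\ref{b2}) yields $\langle W^\lambda_{\alpha,f},\nu-\lambda^+\rangle\geqslant c(\sigma(A_1)-\tau(A_1))=0$, and Lemma~\ref{lequiv} jointly with Theorem~\ref{th:rel} identifies $\lambda$ as the solution of Problem~\ref{prR}. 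For the pointwise sharpening~(\ref{reprrr1'}) in Case~II, I write $W^\lambda_{\alpha,f}$ as a difference of the form $U^{\mu}_\alpha-U^{\mu'}_\alpha$ built from $\lambda^+$ and $\zeta^{\pm}$, and appeal to the classical assertion that $U^{\mu'}_\alpha(x)=U^\mu_\alpha(x)$ at every $\alpha$-regular point $x\in A_2$ whenever $\mu\in\mathcal E_\alpha^+(\mathbb R^n)$. The main technical obstacle I anticipate is the two-constants argument, and specifically the verification that the perturbing measures $\sigma,\tau$ lie in $\mathcal E_{g,f}^\xi(A_1,1;D)$ (finite $g$-energy and $f$-integrability under the constraint $\xi$, where the hypothesis that $f$ is lower bounded on $A_1$ and the assumption~(\ref{acirc}) enter); the Case~II pointwise refinement is by comparison a relatively routine invocation of classical balayage regularity.
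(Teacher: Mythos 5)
Your proposal is correct and follows essentially the same route as the paper: reduce to Problem~\ref{prG} via Theorem~\ref{th:rel}, translate between $W^\lambda_{\alpha,f}$ and $W^{\lambda^+}_{g,f}$ by Lemma~\ref{l-hatg} and the normalization $\lambda^-=(\lambda^+)'$ (forced, in either direction of the equivalence, by (\ref{reprrr'}) together with (\ref{bal-eq}) and $c_\alpha$-absolute continuity), and then characterize the solution of the Green problem by the variational inequality of Lemma~\ref{lequiv}. The differences are cosmetic: the paper works with a single threshold $L:=\sup\{q:\ W^{\lambda^+}_{g,f}\geqslant q\ (\xi-\lambda^+)\text{-a.e.}\}$ (your $c_-$) and shows by contradiction that $W^{\lambda^+}_{g,f}\leqslant L$ $\lambda^+$-a.e., which is the same content as your $c_+\leqslant c_-$; and in the converse it splits $\nu-\lambda^+$ according to the sign of $W^{\lambda^+}_{g,f}-c$ rather than taking the Hahn decomposition of $\nu-\lambda^+$, but both splittings give the same inequality. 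Your flagged technical point (that the perturbation in the two-constants step lands in $\mathcal E^\xi_{g,f}(A_1,1;D)$) is exactly where the paper uses (\ref{acirc}) and the lower-boundedness of $f$: with $\tau:=(\xi-\lambda^+)|_{K_2}$ one gets $E_g(\tau)<E_\alpha(\tau)<\infty$ from $E_\alpha(\xi)<\infty$, and $\langle f,\tau\rangle<\infty$ from $\langle W^{\lambda^+}_{g,f},\tau\rangle\leqslant w'\tau(K_2)<\infty$; the finiteness and nontriviality of $L$ use (\ref{acirc}) and the lower bound on $f$ over $A_1$, just as you anticipate.
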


Let $\breve{A}_2$ denote the {\it $\kappa_\alpha$-reduced kernel\/}
of $A_2$ \cite[p.~164]{L}, namely the set of all $x\in A_2$ such
that $c_\alpha\bigl(B(x,r)\cap A_2\bigr)>0$ for every $r>0$.

In the following Theorems~\ref{desc-sup} and \ref{zone} we suppose that there exists the solution
$\lambda_{\mathbf A}^\xi=\lambda^+-\lambda^-$ to Problem~\ref{prR}. For the sake of simplicity of formulation,
in Theorem~\ref{desc-sup} we also assume that in the case $\alpha=2$ the domain $D$ is simply connected.

\begin{theorem}\label{desc-sup}It holds that
\begin{equation}\label{lemma-desc-riesz}
S^{\lambda^-}_{\mathbb R^n}=\left\{
\begin{array}{lll} \breve{A}_2 & \text{if} & \alpha<2,\\ \partial D  & \text{if} & \alpha=2.\\
\end{array} \right.
\end{equation}
\end{theorem}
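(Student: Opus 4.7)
My plan is to identify $\lambda^-$ with an $\alpha$-Riesz balayage and then analyze the support of the balayage via the integral representation~\eqref{int-repr} and the known properties of $\varepsilon_y^{D^c}$. By formula~\eqref{reprrr} in Theorem~\ref{th:rel}, $\lambda^- = (\lambda^+)^{D^c}$ with $\lambda^+ := \lambda_{A_1}^\xi \in \mathcal E_g^+(A_1,1;D)$; in particular $\lambda^+$ is nonzero. Since $\lambda^+$ is extendible (being bounded) and its extension lies in $\mathcal E_\alpha(\mathbb R^n)$ by Lemma~\ref{l-hen'-comp}-type reasoning applied through the constraint $\xi$, the integral representation \eqref{int-repr} yields
\[
\lambda^- \;=\; \int \varepsilon_y^{D^c}\,d\lambda^+(y),
\]
which reduces the analysis of $S_{\mathbb R^n}^{\lambda^-}$ to the study of the supports of the point-mass balayages $\varepsilon_y^{D^c}$, $y\in A_1\subset D$, together with a measurable-union argument over $\lambda^+$-a.e.\ $y$.

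For the case $\alpha<2$, the inclusion $S_{\mathbb R^n}^{\lambda^-}\subset\breve{A}_2$ follows quickly from the $c_\alpha$-absolute continuity of the balayage (Corollaries~3.19--3.20 of \cite{FZ}, cited in the excerpt): every $x\in A_2\setminus\breve{A}_2$ admits a ball $B(x,r)$ with $c_\alpha\bigl(B(x,r)\cap A_2\bigr)=0$, and that ball is then $\lambda^-$-null. For the reverse inclusion I would invoke the classical fact that, in the Riesz setting of order $\alpha<2$, for each $y\in D$ the balayage $\varepsilon_y^{D^c}$ has support equal to $\breve{A}_2$; this is a consequence of the non-locality of $\kappa_\alpha$ together with the uniqueness relation \eqref{bal-eq}. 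Taking the superposition over $y$ in $S(\lambda^+)\ne\varnothing$ propagates the full support to $\lambda^-$.

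For the case $\alpha=2$ with $D$ simply connected, $U_2^{\lambda^-}$ is superharmonic on $\mathbb R^n$ and harmonic off $S_{\mathbb R^n}^{\lambda^-}$. The inclusion $S_{\mathbb R^n}^{\lambda^-}\subset\partial D$ comes from the sweeping-out (or minimality): any mass of $\lambda^-$ inside the interior of $A_2$ could be further balayaged onto $\partial D$ without disturbing its potential n.e.\ on $D^c$, contradicting uniqueness of the balayage in \eqref{bal-eq}. For the reverse inclusion I would use the Newtonian balayage identity $U_2^{\lambda^-}=U_2^{\lambda^+}$ n.e.\ on $D^c$: if an open subarc $V\subset\partial D$ were disjoint from $S_{\mathbb R^n}^{\lambda^-}$, then $U_2^{\lambda^-}$ would extend harmonically across $V$ and coincide with the strictly subharmonic $U_2^{\lambda^+}$ on $V$, which via the simple connectedness of $D$ (forcing $\partial D$ to be the topological boundary of a single component of $D^c$) yields a contradiction with $\lambda^+\ne 0$.

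The main obstacle is the two reverse inclusions, and in particular making the $\alpha<2$ argument rigorous: the identification $S(\varepsilon_y^{D^c})=\breve{A}_2$ for an individual Dirac mass needs care, since the balayage equation \eqref{bal-eq} only controls $U_\alpha^{\varepsilon_y^{D^c}}$ n.e.\ on $D^c$, and ruling out "holes" in $\breve{A}_2$ requires either a non-local maximum principle or a direct perturbation argument using the perfectness of $\kappa_\alpha$. In the $\alpha=2$ case, the delicate point is converting the topological hypothesis of simple connectedness into a precise obstruction to harmonic continuation across a piece of $\partial D$; this will presumably use the Riesz decomposition for $U_2^{\lambda^+}-U_2^{\lambda^-}$ together with a unique-continuation argument for harmonic functions on the augmented open set $D\cup V$.
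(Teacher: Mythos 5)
Your structural reduction is the same as the paper's: use $\lambda^-=(\lambda^+)'$ from \eqref{reprrr} and the integral representation \eqref{int-repr} to reduce everything to the support of the point-mass balayages $\varepsilon_y^{D^c}$, $y\in D$. The inclusion $S(\lambda^-)\subset\breve A_2$ via $c_\alpha$-absolute continuity is also correct. But the heart of the theorem --- identifying $S(\varepsilon_y^{D^c})$ with $\breve A_2$ (for $\alpha<2$) or $\partial D$ (for $\alpha=2$) --- is exactly the step you leave unproved, and you yourself flag it as ``needs care''. Appealing to ``non-locality of $\kappa_\alpha$ together with the uniqueness relation \eqref{bal-eq}'' is not an argument: uniqueness only pins down the measure, not its support, and does not by itself rule out the ``holes'' you worry about.

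The paper fills this gap by a Kelvin transformation: for $x\in D$ the balayage $\varepsilon_x^{D^c}$ is the Kelvin transform (relative to $S(x,1)$) of the $\kappa_\alpha$-equilibrium measure $\gamma_x$ on the compact set $K_x$ (the Kelvin image of $\mathrm{Cl}_{\overline{\mathbb R^n}}A_2$). This reduces the problem to showing $S(\gamma_x)=\breve K_x$ for $\alpha<2$, which is done by a maximum-principle argument (Landkof Theorem~1.28): if some $x_0\in\breve K_x$ lay off $S(\gamma_x)$, then on a small ball around $x_0$ the potential $U_\alpha^{\gamma_x}$ would be $\alpha$-harmonic, continuous up to the boundary, and would attain the maximal value $1$ at some interior point (by $c_\alpha(B(x_0,r)\cap\breve K_x)>0$), forcing $U_\alpha^{\gamma_x}\equiv 1$ $m_n$-a.e.\ and hence n.e., i.e.\ $\gamma_x$ would be an equilibrium measure for all of $\mathbb R^n$ --- impossible. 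The $\alpha=2$ case is then the classical fact that the Newtonian equilibrium measure of a compact set is carried by its boundary (Landkof, Ch.~II, \S3, n$^\circ$13); it does not rest on the sweeping/harmonic-continuation heuristics you outline, both of which have gaps (the ``re-balayage onto $\partial D$ without disturbing the potential on $D^c$'' argument is false --- moving mass off an interior point of $A_2$ changes the potential near that point). The missing idea in your write-up is precisely the Kelvin-transform-to-equilibrium-measure step plus the $\alpha$-harmonic maximum principle; without it, both reverse inclusions remain unsupported.
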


\begin{theorem}\label{zone}Let\/ $f=0$. Then
\begin{equation}\label{th}W^{\lambda_{\mathbf A}^\xi}_{\alpha,f}=U^{\lambda_{\mathbf A}^\xi}_\alpha=\left\{
\begin{array}{lll} U_g^{\lambda^+} & \text{n.e.\ on} & D,\\
0 & \text{on} & D^c\setminus I_{\alpha,D^c}.\\ \end{array} \right.
\end{equation}
Furthermore, assertion\/ {\rm(ii)} of Theorem\/~{\rm\ref{desc-pot}} holds, and relations\/ {\rm(\ref{b1})}
and\/ {\rm(\ref{b2})} now take respectively the following\/ {\rm(}equivalent\/{\rm)} form:
\begin{align}\label{b21}U^{\lambda_{\mathbf A}^\xi}_\alpha&=c\text{ \ $(\xi-\lambda^+)$-a.e.},\\
\label{b11}U^{\lambda_{\mathbf A}^\xi}_\alpha&\leqslant c\text{ \ on \ }\mathbb R^n,
\end{align}
where\/ $0<c<\infty$. In addition, in the present case\/ $f=0$
relations\/ {\rm(\ref{b21})} and\/ {\rm(\ref{b11})} together with\/
$U^{\lambda_{\mathbf A}^\xi}_\alpha=0$ n.e.\ on\/ $D^c$ determine
uniquely the solution\/ $\lambda_{\mathbf A}^\xi$ to
Problem\/~{\rm\ref{prR}} within the class\/ $\mathcal
E^\xi_{\alpha,f}(\mathbf A,\mathbf 1;\mathbb R^n)$ of admissible
measures. If moreover\/ $U_\alpha^\xi$ is\/ {\rm(}finitely\/{\rm)}
continuous on\/ $D$, then also
\begin{equation}\label{b21'}U^{\lambda_{\mathbf A}^\xi}_\alpha=c\text{ \ on \ }S^{\xi-\lambda^+}_D,
\end{equation}
\begin{equation}\label{cg}c_{g_D^\alpha}\bigl(S_D^{\xi-\lambda^+}\bigr)<\infty.\end{equation}
Omitting now the requirement of the continuity of\/ $U_\alpha^\xi$,
assume next that\/ $\alpha<2$ and\/ $m_n(D^c)>0$, where\/ $m_n$ is
the\/ $n$-dimen\-sional Lebesgue measure. Then
\begin{equation}\label{pg}S_D^{\lambda^+}=S_D^{\xi},\end{equation}
\begin{equation}\label{supp}U^{\lambda_{\mathbf A}^\xi}_\alpha<c\text{ \ on \ }
D\setminus S_D^{\xi}\quad\Bigl({}=D\setminus
S_D^{\lambda^+}\Bigr).\end{equation}
\end{theorem}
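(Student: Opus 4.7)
The strategy is to first extract the potential formulas (\ref{th}) from Theorems~\ref{th:rel} and~\ref{desc-pot}, then to sharpen the characterization (\ref{b1})--(\ref{b2}) of Theorem~\ref{desc-pot} via the complete maximum principle for $\kappa_\alpha$, and finally to prove (\ref{pg}) and (\ref{supp}) by exploiting the $\alpha$-harmonicity of $U_\alpha^{\lambda_{\mathbf A}^\xi}$ away from the support of $\xi$. Throughout, $f=0$ lies in both Case~I (trivially) and Case~II (with $\zeta=0$).

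By (\ref{reprrr}), $\lambda^+ = \lambda^\xi_{A_1}$ and $\lambda^-=(\lambda^+)'$; Lemma~\ref{l-hatg} then yields $U_g^{\lambda^+} = U_\alpha^{\lambda^+-\lambda^-} = U_\alpha^{\lambda^\xi_{\mathbf A}}$ n.e.\ on $D$, while (\ref{reprrr1'}) in Case~II supplies the vanishing on $D^c \setminus I_{\alpha,D^c}$, together establishing (\ref{th}). Next, Theorem~\ref{desc-pot} produces $c \in \mathbb R$ with (\ref{b1})--(\ref{b2}). In Case~I the footnote to that theorem strengthens (\ref{b2}) to $U_\alpha^{\lambda^\xi_{\mathbf A}} \leqslant c$ on $S_D^{\lambda^+}$; applying Landkof's complete maximum principle \cite[Theorems~1.27, 1.29]{L} to the decomposition $U_\alpha^{\lambda^+} \leqslant U_\alpha^{\lambda^-}+c$ on $S^{\lambda^+}$ promotes this to (\ref{b11}) on all of $\mathbb R^n$, and then (\ref{b1}) pinches equality, giving (\ref{b21}). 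For $0<c<\infty$, decompose
\[\|\lambda^\xi_{\mathbf A}\|_\alpha^2 = \langle U_\alpha^{\lambda^\xi_{\mathbf A}}, \lambda^+\rangle - \langle U_\alpha^{\lambda^\xi_{\mathbf A}}, \lambda^-\rangle,\]
where the second term vanishes since $U_\alpha^{\lambda^\xi_{\mathbf A}} = 0$ n.e.\ on $D^c \supset S(\lambda^-)$ and $\lambda^-$ is $c_\alpha$-absolutely continuous. If $c=0$ then (\ref{b11}) would force this energy to be $\leqslant 0$, contradicting $\lambda^+(A_1)=1$; finiteness is automatic since the potential is finite n.e.\ and (\ref{b21}) holds on a non-null set.

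Uniqueness within $\mathcal E_{\alpha,f}^\xi(\mathbf A,\mathbf 1;\mathbb R^n)$ follows because any $\tilde\lambda$ satisfying (\ref{b21}), (\ref{b11}) and $U_\alpha^{\tilde\lambda} = 0$ n.e.\ on $D^c$ fulfils (b1), (b2), (\ref{reprrr'}), hence solves Problem~\ref{prR} by Theorem~\ref{desc-pot}, and then coincides with $\lambda^\xi_{\mathbf A}$ by Lemma~\ref{l:uniq}. Under continuity of $U_\alpha^\xi$ on $D$, the identity $U_\alpha^{\lambda^+} = U_\alpha^\xi - U_\alpha^{\xi-\lambda^+}$ presents $U_\alpha^{\lambda^+}$ as the difference of a continuous and an l.s.c.\ function, so it is u.s.c.; being also l.s.c., it is continuous on $D$, and since $\lambda^-$ lives off $D$, $U_\alpha^{\lambda^-}$ is continuous on $D$ as well. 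Hence $U_\alpha^{\lambda^\xi_{\mathbf A}}$ is continuous on $D$, and (\ref{b21}) extends by continuity from a $(\xi-\lambda^+)$-conull set to the closed set $S_D^{\xi-\lambda^+}$, giving (\ref{b21'}). The bound (\ref{cg}) follows from the standard Frostman-type estimate: any $\nu\in\mathcal E_g^+(S_D^{\xi-\lambda^+},1;D)$ satisfies
\[c = \int U_g^{\lambda^+}\,d\nu = E_g(\lambda^+,\nu) \leqslant \|\lambda^+\|_g\,\|\nu\|_g,\]
so $c_g(S_D^{\xi-\lambda^+}) \leqslant \|\lambda^+\|_g^2/c^2 < \infty$.

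For (\ref{pg}) and (\ref{supp}) under $\alpha<2$ and $m_n(D^c)>0$, the key observation is that $\lambda^\xi_{\mathbf A}$ charges neither $D\setminus S_D^\xi$ (since $\lambda^+\leqslant\xi$ and $\lambda^-\subset D^c$) nor any relatively open $V\subset D$ disjoint from $S_D^{\lambda^+}$. On any such open set $U_\alpha^{\lambda^\xi_{\mathbf A}}$ is $\alpha$-harmonic, hence obeys the $\alpha$-Poisson mean-value identity against a strictly positive Lebesgue density on $\mathbb R^n\setminus\overline{B}(y_0,r)$. If $U_\alpha^{\lambda^\xi_{\mathbf A}}(y_0)=c$ at some interior $y_0$, the upper bound (\ref{b11}) forces equality Lebesgue-a.e.\ there, in particular on a subset of $D^c$ of positive $m_n$-measure; combined with $U_\alpha^{\lambda^\xi_{\mathbf A}}=0$ n.e.\ (hence $m_n$-a.e., since $c_\alpha(E)=0\Rightarrow m_n(E)=0$ for $\alpha<n$) on $D^c$, this forces $c=0$, a contradiction. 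Applied to $y_0\in D\setminus S_D^\xi$ this gives (\ref{supp}). For (\ref{pg}), supposing $x\in S_D^\xi\setminus S_D^{\lambda^+}$ yields an open $V$ with $\lambda^+(V)=0$ and $\xi(V)>0$; since $(\xi-\lambda^+)|_V=\xi|_V\neq 0$, (\ref{b21}) ensures $\{U_\alpha^{\lambda^\xi_{\mathbf A}}=c\}\cap V\neq\varnothing$, and the same mean-value contradiction applies. The main delicate points are the application of the complete maximum principle to the \emph{signed} potential, and the combined use of the $\alpha$-harmonic mean-value property with the absolute continuity $c_\alpha\ll m_n$ in the final step.
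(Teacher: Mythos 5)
Your proof follows the paper's approach almost step for step, but it contains one genuine logical gap in the order of argumentation: you derive the global bound \eqref{b11} from \eqref{b2} by applying the complete maximum principle \cite[Theorems~1.27, 1.29]{L} to $U_\alpha^{\lambda^+}\leqslant U_\alpha^{\lambda^-}+c$, and only \emph{afterwards} establish $c>0$ by an energy argument that itself invokes \eqref{b11}. Landkof's complete maximum principle requires the constant $c$ to be nonnegative (a negative constant is not $\alpha$-superharmonic, and the principle fails for $c<0$, e.g.\ because potentials vanish at infinity). So as written the derivation of \eqref{b11} presupposes $c\geqslant0$, which you have not established at that point; the subsequent deduction of $c>0$ from \eqref{b11} is then circular. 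The paper avoids this by first noting that $U_\alpha^{\lambda^\xi_{\mathbf A}}=U_g^{\lambda^+}$ is strictly positive on $D$ (from \eqref{th}), so \eqref{b2} immediately forces $c>0$, and only then applies the maximum principle. Your energy argument can be salvaged with a small change: the bound $\langle U_\alpha^{\lambda^\xi_{\mathbf A}},\lambda^+\rangle\leqslant c\lambda^+(A_1)=c$ already follows from the $\lambda^+$-a.e.\ inequality \eqref{b2}, not \eqref{b11}, and combined with $\|\lambda_{\mathbf A}^\xi\|_\alpha^2=\langle U_\alpha^{\lambda^\xi_{\mathbf A}},\lambda^+\rangle>0$ this yields $c>0$ before any use of the maximum principle.

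Apart from that ordering issue, everything else matches the paper: the identification of $W_{\alpha,f}^{\lambda^\xi_{\mathbf A}}$ with $U_g^{\lambda^+}$ via \eqref{reprrr} and Lemma~\ref{l-hatg}, the continuity of $U_\alpha^{\lambda^+}$ via the u.s.c./l.s.c.\ sandwich, and the mean-value (strong maximum principle) contradiction for \eqref{pg}--\eqref{supp}, which is precisely what the paper extracts from \cite[Theorem~1.28]{L}. For \eqref{cg} you reproduce the Frostman-type energy estimate that the paper obtains by citing \cite[Lemma~3.2.2]{F1} with $\kappa=g$; these are the same argument. Your footnote-based strengthening of \eqref{b2} to an inequality on $S_D^{\lambda^+}$ is a minor stylistic variant of the paper's $\lambda^+$-a.e.\ formulation and causes no trouble once $c>0$ is in hand.
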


The proofs of Theorems~\ref{th-suff}--\ref{zone} are presented in
Section~\ref{sec-proofsss}.

\subsection{An extension of the theory}\label{entire} Parallel with a constraint
$\xi\in\mathfrak C(A_1;\mathbb R^n)$ given by relation (\ref{constr1}) and acting only on (positive)
measures from $\mathcal E^+_\alpha(A_1,1;\mathbb R^n)$, consider also the measure
$\sigma=\sigma^+-\sigma^-\in\mathfrak M(\mathbf A;\mathbb R^n)$ defined as follows:
\begin{equation}\label{sigma}\sigma^+=\xi,\text{ \ while \ }\sigma^-\geqslant\xi'.\end{equation}
Since $\sigma^-(\mathbb R^n)\geqslant\xi'(\mathbb R^n)=\xi(\mathbb
R^n)>1$, where the equality is obtained from
Theorem~\ref{bal-mass-th}, this $\sigma$ can be thought of as a {\it
signed constraint\/} acting on (signed) measures from $\mathcal
E_\alpha(\mathbf A;\mathbf 1;\mathbb R^n)$. Let $\mathcal
E_\alpha^\sigma(\mathbf A,\mathbf 1;\mathbb R^n)$ consist of all
$\mu\in\mathcal E_\alpha(\mathbf A,\mathbf 1;\mathbb R^n)$ such that
$\mu^\pm\leqslant\sigma^\pm$, and let
\begin{equation}\label{sc}G_{\alpha,f}^{\sigma}(\mathbf A,\mathbf 1;\mathbb R^n):=
\inf_{\mu\in\mathcal E_{\alpha,f}^\sigma(\mathbf A,\mathbf 1;\mathbb R^n)}\,G_{\alpha,f}(\mu),\end{equation}
where $\mathcal E_{\alpha,f}^\sigma(\mathbf A,\mathbf 1;\mathbb R^n):=
\mathcal E_\alpha^\sigma(\mathbf A,\mathbf 1;\mathbb R^n)\cap\mathcal E_{\alpha,f}(\mathbb R^n)$.

\begin{theorem}\label{l:eq}With these assumptions and notations, we have
\begin{equation}\label{eq:eq}G_{\alpha,f}^{\sigma}(\mathbf A,\mathbf 1;\mathbb R^n)=
G_{\alpha,f}^{\xi}(\mathbf A,\mathbf 1;\mathbb R^n).
\end{equation}
If these\/ {\rm(}equal\/{\rm)} extremal values are finite, then
Problem\/~{\rm\ref{prR}} {\rm(}with the positive constraint\/
$\xi${\rm)} is solvable if and only so is problem\/ {\rm(\ref{sc})}
{\rm(}with the signed constraint\/ $\sigma${\rm)}, and in the
affirmative case their solutions coincide.
\end{theorem}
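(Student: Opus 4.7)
\textbf{Proof proposal for Theorem~\ref{l:eq}.}

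The plan is to exploit the variational identity encoded in (\ref{hope}) together with monotonicity and mass-preservation of $\alpha$-Riesz balayage in order to ``replace'' the negative part of any admissible measure by the balayage of its positive part, which lies automatically under $\sigma^-$. Since $\mathcal E_{\alpha,f}^\sigma(\mathbf A,\mathbf 1;\mathbb R^n)\subset\mathcal E_{\alpha,f}^\xi(\mathbf A,\mathbf 1;\mathbb R^n)$ (the upper bound on $\mu^-$ being an \emph{additional} restriction beyond $\mu^+\leqslant\xi=\sigma^+$), the inequality $G_{\alpha,f}^\sigma(\mathbf A,\mathbf 1;\mathbb R^n)\geqslant G_{\alpha,f}^\xi(\mathbf A,\mathbf 1;\mathbb R^n)$ is immediate.

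For the reverse inequality, fix any $\mu\in\mathcal E_{\alpha,f}^\xi(\mathbf A,\mathbf 1;\mathbb R^n)$ and set $\widetilde\mu:=\mu^+-(\mu^+)'$. First I would check that $\widetilde\mu\in\mathcal E_{\alpha,f}^\sigma(\mathbf A,\mathbf 1;\mathbb R^n)$. Its positive part coincides with $\mu^+\leqslant\xi=\sigma^+$; its negative part is $(\mu^+)'$, and from the integral representation (\ref{int-repr}) the balayage operator is monotone, so $\mu^+\leqslant\xi$ yields $(\mu^+)'\leqslant\xi'\leqslant\sigma^-$. The normalization $(\mu^+)'(A_2)=1$ follows from Theorem~\ref{bal-mass-th}, using the standing hypothesis that $A_2$ is not $\alpha$-thin at infinity and that $\mu^+$ is bounded (because $\mu^+\leqslant\xi$ and $\xi(A_1)<\infty$ in the only nontrivial case; otherwise truncate). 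Finiteness of $G_{\alpha,f}(\widetilde\mu)$ follows from $\langle f,\widetilde\mu^+\rangle=\langle f,\mu^+\rangle<\infty$ and $\|\widetilde\mu\|_\alpha^2=\|\mu^+\|_g^2<\infty$, the latter by~(\ref{eq1-2-hen}).

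Next, repeating verbatim the chain~(\ref{hope}) with $\mu$ in place of $\mu$ there, I obtain
\begin{equation*}
G_{\alpha,f}(\mu)=\|\mu^+-\mu^-\|_\alpha^2+2\langle f,\mu^+\rangle\geqslant\|\mu^+-(\mu^+)'\|_\alpha^2+2\langle f,\mu^+\rangle=G_{\alpha,f}(\widetilde\mu),
\end{equation*}
the crucial middle inequality being the orthogonal-projection property (\ref{proj}) applied to $\mu^-\in\mathcal E^+_\alpha(A_2;\mathbb R^n)$. Taking the infimum over $\mu$ yields $G_{\alpha,f}^\xi(\mathbf A,\mathbf 1;\mathbb R^n)\geqslant G_{\alpha,f}^\sigma(\mathbf A,\mathbf 1;\mathbb R^n)$, which gives the equality~(\ref{eq:eq}).

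For the solvability statement, suppose the common value is finite. If Problem~\ref{prR} is solvable, Theorem~\ref{th:rel} gives $\lambda_{\mathbf A}^\xi=\lambda_{A_1}^\xi-(\lambda_{A_1}^\xi)'$; monotonicity of balayage applied to $\lambda_{A_1}^\xi\leqslant\xi$ then gives $(\lambda_{A_1}^\xi)'\leqslant\xi'\leqslant\sigma^-$, so $\lambda_{\mathbf A}^\xi\in\mathcal E_{\alpha,f}^\sigma(\mathbf A,\mathbf 1;\mathbb R^n)$ and, by (\ref{eq:eq}), it solves the $\sigma$-constrained problem. Conversely, any solution of the $\sigma$-problem lies \emph{a fortiori} in $\mathcal E_{\alpha,f}^\xi(\mathbf A,\mathbf 1;\mathbb R^n)$ and, again by (\ref{eq:eq}), realizes $G_{\alpha,f}^\xi(\mathbf A,\mathbf 1;\mathbb R^n)$; uniqueness (Lemma~\ref{l:uniq}) forces it to coincide with $\lambda_{\mathbf A}^\xi$. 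I expect the only mildly delicate step to be checking that $\widetilde\mu$ has the correct total masses and lies under $\sigma$, since this hinges on the non-$\alpha$-thinness of $A_2$ at infinity via Theorem~\ref{bal-mass-th}; the energy comparison itself is just a rereading of~(\ref{hope}).
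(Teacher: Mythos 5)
Your argument is correct and follows essentially the same route as the paper: obtain $G_{\alpha,f}^\sigma\geqslant G_{\alpha,f}^\xi$ from the inclusion of admissible classes, obtain the reverse inequality by replacing any $\mu=\mu^+-\mu^-$ with $\mu^+-(\mu^+)'$ and invoking the projection property (\ref{proj}) together with Theorem~\ref{bal-mass-th} and the ordering $(\mu^+)'\leqslant\xi'\leqslant\sigma^-$, and then deduce the equivalence of solvability from (\ref{eq:eq}) and Theorem~\ref{th:rel}/(\ref{reprrr}). One small inaccuracy worth flagging: you appeal to ``$\mu^+\leqslant\xi$ and $\xi(A_1)<\infty$\ldots otherwise truncate'' to justify boundedness of $\mu^+$, but the hypotheses of Theorem~\ref{l:eq} do not assume $\xi(A_1)<\infty$; the boundedness of $\mu^+$ is already immediate from the normalization $\mu^+(A_1)=1$ built into $\mathcal E_{\alpha,f}^\xi(\mathbf A,\mathbf 1;\mathbb R^n)$, so no truncation is needed.
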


\begin{proof} Indeed, $G_{\alpha,f}^{\sigma}(\mathbf A,\mathbf 1;\mathbb R^n)
\geqslant G_{\alpha,f}^{\xi}(\mathbf A,\mathbf 1;\mathbb R^n)$ follows directly from the relation
\begin{equation}\label{pr:inclus}\mathcal E_{\alpha,f}^{\sigma}(\mathbf A,\mathbf 1;\mathbb R^n)
\subset\mathcal E_{\alpha,f}^{\xi}(\mathbf A,\mathbf 1;\mathbb
R^n).\end{equation} To prove the converse inequality, assume
$G_{\alpha,f}^{\xi}(\mathbf A,\mathbf 1;\mathbb R^n)<\infty$ and fix
$\nu\in\mathcal E_{\alpha,f}^{\xi}(\mathbf A,\mathbf 1;\mathbb
R^n)$. Define $\mu:=\nu^+-(\nu^+)'$. It is obvious that
$\mu\in\mathcal E_\alpha(\mathbf A;\mathbb R^n)$, while
Theorem~\ref{bal-mass-th} shows that $(\nu^+)'(A_2)=\nu^+(A_1)=1$.
Furthermore, $(\nu^+)'\leqslant\xi'\leqslant\sigma^-$ by the
linearity of balayage and (\ref{sigma}), and so altogether
$\mu\in\mathcal E_\alpha^{\sigma}(\mathbf A,\mathbf 1;\mathbb R^n)$.
According to (\ref{proj}) and (\ref{EG}), we thus have
\begin{align*}G_{\alpha,f}(\nu)&=\|\nu\|^2_\alpha+2\langle f,\nu^+\rangle
\geqslant\|\nu^+-(\nu^+)'\|^2_\alpha+2\langle f,\nu^+\rangle\\
{}&=\|\mu\|^2_\alpha+2\langle f,\mu^+\rangle=G_{\alpha,f}(\mu)
\geqslant G_{\alpha,f}^\sigma(\mathbf A,\mathbf 1;\mathbb
R^n),\notag\end{align*} which establishes (\ref{eq:eq}) by letting
here $\nu$ range over $\mathcal E_{\alpha,f}^{\xi}(\mathbf A,\mathbf
1;\mathbb R^n)$.

Assume now that (\ref{cggen}) holds. If there is a solution
$\lambda^\sigma_{\mathbf A}$ to problem (\ref{sc}), then this
$\lambda^\sigma_{\mathbf A}$ also solves Problem~\ref{prR}, which is
clear from (\ref{eq:eq}) and (\ref{pr:inclus}). Conversely, if
$\lambda^\xi_{\mathbf A}=\lambda^+-\lambda^-$ solves
Problem~\ref{prR}, then by (\ref{reprrr}) it holds that
$\lambda^-=(\lambda^+)'$, and in the same manner as in the preceding
paragraph we get $\lambda^\xi_{\mathbf A}\in \mathcal
E_\alpha^{\sigma}(\mathbf A,\mathbf 1;\mathbb R^n)$. Hence,
$\lambda^\xi_{\mathbf A}$ also solves problem (\ref{sc}) because
$G_{\alpha,f}(\lambda^\xi_{\mathbf A})= G_{\alpha,f}^{\xi}(\mathbf
A,\mathbf 1;\mathbb R^n)=G_{\alpha,f}^{\sigma}(\mathbf A,\mathbf
1;\mathbb R^n)$ by~(\ref{eq:eq}).\end{proof}

Thus the theory of weighted minimum $\alpha$-Riesz energy problems
with a (positive) constraint $\xi\in\mathfrak C(A_1;\mathbb R^n)$
acting only on positive parts of measures from $\mathcal
E_\alpha(\mathbf A,\mathbf 1;\mathbb R^n)$, developed in
Section~\ref{form:main}, remains valid in its full generality for
the signed constraint $\sigma$, defined by~(\ref{sigma}) and acting
simultaneously on positive and negative parts of $\mu\in\mathcal
E_\alpha(\mathbf A,\mathbf 1;\mathbb R^n)$.

\begin{remark}\label{J}Assume for a moment that a generalized condenser is a finite
collection $\mathbf K=(K_i)_{i\in I}$ of compact sets
$K_i\subset\mathbb R^n$, $i\in I$, with the sign $s_i=\pm1$
prescribed such that
\begin{equation}\label{bzero}c_\alpha(K_i\cap K_j)=0\text{ \ whenever \ }s_is_j=-1.\end{equation}
Problem~\ref{prR}, formulated for $\mathbf K$ in place of $\mathbf
A$, has been analyzed in our recent work \cite{DFHSZ2} for the
$\alpha$-Riesz kernel of {\it any\/} order $\alpha\in(0,n)$, {\it
any\/} normalizing vector $\mathbf a=(a_i)_{i\in I}$, a
vector-valued external field $\mathbf f=(f_i)_{i\in I}$, and a
vector constraint $(\xi^i)_{i\in I}$ such that $U_\alpha^{\xi^i}$ is
(finitely) continuous on $K_i$; see e.g.\ Theorem~6.1 therein.
(Compare with \cite{BC} where a similar problem with $I=\{1,2\}$ and
$\mathbf f=\mathbf 0$ was treated for the logarithmic kernel on the
plane.) However, the approach developed in \cite{DFHSZ2} was based
substantially on the requirement (\ref{bzero}), and can not be
adapted to the present case where $A_2\cap\mathrm{Cl}_{\mathbb
R^n}A_1$ may have nonzero $\alpha$-Riesz capacity.
\end{remark}

\section{Proofs of the assertions formulated in Section \ref{form:main}}\label{sec-proofsss}

Observe that if Case II takes place, then
\begin{equation}\label{IIgg}\zeta\in\mathcal E_g(D),\end{equation}
\begin{equation}\label{IIg}f=U_\alpha^{\zeta-\zeta'}=U_g^\zeta\text{ \ $c_g$-n.e.\ on\ }D.\end{equation}
Indeed, (\ref{IIgg}) is obvious by (\ref{l3-1}), and (\ref{IIg})
holds by Lemma~\ref{l-hatg} and footnote~\ref{RG}. By (\ref{IIgg})
and (\ref{IIg}) we get in Case~II for every $\nu\in\mathcal
E^+_g(A_1;D)$
\begin{equation}\label{est:bel}G_{g,f}(\nu)=\|\nu\|^2_g+2E_g(\zeta,\nu)=\|\nu+\zeta\|^2_g-\|\zeta\|_g^2.
\end{equation}

\subsection{Proof of Theorem \ref{th-suff}} By Theorem~\ref{th:rel}, Theorem~\ref{th-suff} will be proved
once we have established the following assertion.

\begin{theorem}\label{exists}Under the assumptions of Theorem\/~{\rm\ref{th-suff}},
Problem\/~{\rm\ref{prG}} is solvable.\end{theorem}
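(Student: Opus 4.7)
The plan is to solve Problem~\ref{prG} directly in the cone $\mathcal E_g^+(D)$ by a classical Hilbert-space minimization argument; existence for Problem~\ref{prR} will then follow automatically from Theorem~\ref{th:rel}. The three pillars will be the perfectness of the $\alpha$-Green kernel (Remark~\ref{rem:clas} and Theorem~\ref{fu-complete}), the convexity of the admissible class $\mathcal E_{g,f}^\xi(A_1,1;D)$, and the tightness forced by the bounded constraint $\xi(A_1)<\infty$.

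First I would pick a minimizing sequence $\{\nu_k\}\subset\mathcal E_{g,f}^\xi(A_1,1;D)$ with $G_{g,f}(\nu_k)\downarrow G_{g,f}^\xi(A_1,1;D)$; the infimum is finite because (\ref{cggen}) holds by hypothesis (Lemma~\ref{suff-fin}) and is equivalent to (\ref{Gfin}) via Theorem~\ref{th:rel}. The pointwise constraint $\mu\leqslant\xi$ being preserved under convex combinations, the admissible class is convex, so $(\nu_k+\nu_j)/2$ lies in it. Combining the parallelogram identity in the pre-Hilbert space $\mathcal E_g(D)$ with the linearity of $\mu\mapsto\langle f|_D,\mu\rangle$ yields
\[
\|\nu_k-\nu_j\|_g^2=2\bigl[G_{g,f}(\nu_k)+G_{g,f}(\nu_j)\bigr]-4\,G_{g,f}\bigl(\tfrac{\nu_k+\nu_j}{2}\bigr)\leqslant 2\bigl[G_{g,f}(\nu_k)+G_{g,f}(\nu_j)\bigr]-4\,G_{g,f}^\xi(A_1,1;D),
\]
which tends to $0$ as $k,j\to\infty$. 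Hence $\{\nu_k\}$ is strong Cauchy in $\mathcal E_g^+(D)$, and Theorem~\ref{fu-complete} produces a unique $\lambda\in\mathcal E_g^+(D)$ to which it converges both strongly and vaguely.

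Next I would verify that $\lambda\in\mathcal E_{g,f}^\xi(A_1,1;D)$. Vague convergence and the relative closedness of $A_1$ in $D$ give that $\lambda$ is carried by $A_1$; the vague closedness of the cone $\mathfrak M^+(D)$ applied to $\xi-\nu_k\geqslant 0$ gives $\lambda\leqslant\xi$. The delicate point, which I expect to be the main obstacle, is showing $\lambda(A_1)=1$: vague convergence generally fails to preserve total mass on the non-compact set $A_1$. This is exactly where the boundedness of $\xi$ is decisive: since $\xi$ is a bounded Radon measure on $D$, for every $\varepsilon>0$ there is a compact $K_\varepsilon\subset A_1$ with $\xi(A_1\setminus K_\varepsilon)<\varepsilon$, and therefore $\nu_k(A_1\setminus K_\varepsilon)\leqslant\varepsilon$ uniformly in $k$. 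This tightness, combined with vague convergence on compacts and the uniform majorant $\lambda\leqslant\xi$, forces $\nu_k(A_1)\to\lambda(A_1)$, so $\lambda(A_1)=1$.

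Finally I would pass to the limit in the energy functional. Strong convergence gives $\|\nu_k\|_g^2\to\|\lambda\|_g^2$. For the external-field term, in Case~I one has $f\in\Psi(\mathbb R^n)$, so Lemma~\ref{lemma-semi} makes $\mu\mapsto\langle f|_D,\mu\rangle$ vaguely l.s.c.\ on $\mathfrak M^+(D)$, whence $\liminf G_{g,f}(\nu_k)\geqslant G_{g,f}(\lambda)$, while admissibility of $\lambda$ supplies the reverse inequality. In Case~II, identity (\ref{IIg}) together with the $c_g$-absolute continuity of every measure in $\mathcal E_g^+(D)$ rewrites $\langle f|_D,\nu\rangle$ as $E_g(\zeta,\nu)$, and the Cauchy--Schwarz inequality in $\mathcal E_g(D)$ then yields continuity of this term along the strongly convergent sequence. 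Either way $G_{g,f}(\lambda)=G_{g,f}^\xi(A_1,1;D)$, so $\lambda$ is the desired minimizer; its uniqueness has already been recorded just after the statement of Problem~\ref{prG}.
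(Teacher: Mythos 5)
Your proof is correct and takes essentially the same route as the paper: minimizing sequence, parallelogram identity to get a strong Cauchy sequence, perfectness of $g^\alpha_D$ to obtain a limit $\lambda$ in $\mathcal E_g^+(D)$, tightness from $\xi(A_1)<\infty$ to preserve the unit mass, and l.s.c.\ (Case~I) resp.\ strong continuity (Case~II) of the external-field term. The only cosmetic difference is the order in which the two modes of convergence are established — you invoke Theorem~\ref{fu-complete} to get the strong limit first and deduce vague convergence, while the paper first extracts a vague cluster point from the vague relative compactness of $\mathfrak M^\xi(A_1,1;D)$ and then uses Definition~\ref{def-perf} to upgrade it to a strong limit — but the substance is identical.
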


\begin{proof} Under the assumptions of Theorem~\ref{th-suff} Problem~\ref{prG} makes sense since,
by (\ref{equality}), (\ref{Gfin}) holds. (The value
$G_{g,f}^{\xi}(A_1,1;D)$ is then actually finite, which is clear
from (\ref{equality}) and footnote~\ref{foot-G-finite}.) In view of
(\ref{Gfin}), there is a sequence $\{\mu_k\}_{k\in\mathbb N}\subset
\mathcal E_{g,f}^{\xi}(A_1,1;D)$ such that
\begin{equation}\label{min-seq}\lim_{k\to\infty}\,G_{g,f}(\mu_k)=G_{g,f}^{\xi}(A_1,1;D).\end{equation}
Since $\mathcal E_{g,f}^{\xi}(A_1,1;D)$ is a convex cone and
$\mathcal E_g(D)$ is a pre-Hilbert space with the inner product
$E_g(\nu,\nu_1)$ and the energy norm $\|\nu\|_g=\sqrt{E_g(\nu)}$,
arguments similar to those in the proof of Lemma~\ref{l:uniq} can be
applied to the set $\{\mu_k:\ k\in\mathbb N\}$. This gives
\begin{equation*}0\leqslant\|\mu_k-\mu_\ell\|^2_g\leqslant-
4G^{\xi}_{g,f}(A_1,1;D)+2G_{g,f}(\mu_k)+2G_{g,f}(\mu_\ell).\end{equation*}
Letting here $k,\ell\to\infty$ and then combining the relation thus
obtained with (\ref{min-seq}), we see in view of the finiteness of
$G_{g,f}^{\xi}(A_1,1;D)$ that $\{\mu_k\}_{k\in\mathbb N}$ forms a
strong Cauchy sequence in the metric space $\mathcal E^+_g(D)$. In
particular, this implies
\begin{equation}\label{M}\sup_{k\in\mathbb N}\,\|\mu_k\|_g<\infty.\end{equation}

Since $A_1$ is (relatively) closed in $D$ and the cone $\mathfrak
M^+(D)$ is vaguely closed in $\mathfrak M(D)$, so is the cone
$\mathfrak M^{\xi}(A_1;D):=\{\nu\in\mathfrak M^+(A_1;D):\
\nu\leqslant\xi\}$. Furthermore, the set $\mathfrak
M^\xi(A_1,1;D):=\mathfrak M^{\xi}(A_1;D)\cap\mathfrak M^+(A_1,1;D)$
is vaguely bounded, and hence it is vaguely relatively compact
according to \cite[Chapter~III, Section~2, Proposition~9]{B2}. Thus,
there exists a vague cluster point $\mu$ of the sequence
$\{\mu_k\}_{k\in\mathbb N}$ chosen above, and this $\mu$ belongs to
$\mathfrak M^\xi(A_1;D)$. Passing to a subsequence and changing
notations, we can certainly assume that
\begin{equation}\label{vag-conv}\mu_k\to\mu\text{ \ vaguely in $\mathfrak M^+(D)$ as\ }k\to\infty.
\end{equation}
We assert that this $\mu$ is a solution to Problem~\ref{prG}.

Applying Lemma~\ref{lemma-semi} to $1_D\in\Psi(D)$, we obtain from
(\ref{vag-conv})
\begin{equation*}\label{eq}\mu(A_1)=\mu(D)\leqslant\lim_{k\to\infty}\,\mu_k(D)=1.\end{equation*}
We proceed by showing that equality prevails in the inequality here,
and so altogether
\begin{equation}\label{solution1}\mu\in\mathfrak M^\xi(A_1,1;D).\end{equation}
Consider an exhaustion of $A_1$ by an increasing sequence
$\{K_j\}_{j\in\mathbb N}$ of compact sets. Since $1_{K_j}$ is upper
semicontinuous on $D$ (and of course bounded), we get from
(\ref{vag-conv}) and Lemma~\ref{lemma-semi} with $X=D$ and
$\psi=-1_{K_j}$
\[1\geqslant\mu(A_1)=\lim_{j\to\infty}\,
\mu(K_j)\geqslant\lim_{j\to\infty}\,\limsup_{k\to\infty}\,\mu_k(K_j)
=1-\lim_{j\to\infty}\,\liminf_{k\to\infty}\,\mu_k(A_1\setminus
K_j).\] Relation (\ref{solution1}) will therefore follow if we show
that
\begin{equation}\label{g0}\lim_{j\to\infty}\,\liminf_{k\to\infty}\,\mu_k(A_1\setminus
K_j)=0.\end{equation} Since by (\ref{boundd})
\[\infty>\xi(A_1)=\lim_{j\to\infty}\,\xi(K_j),\]
we have
\[\lim_{j\to\infty}\,\xi(A_1\setminus K_j)=0.\]
When combined with
\[\mu_k(A_1\setminus K_j)\leqslant\xi(A_1\setminus K_j)\text{ \ for any\ }k,j\in\mathbb N,\]
this implies (\ref{g0}) and consequently
(\ref{solution1}).

Another consequence of (\ref{vag-conv}) is that
$\mu_k\otimes\mu_k\to\mu\otimes\mu$ vaguely in $\mathfrak
M^+(D\times D)$ \cite[Chapter~III, Section~5, Exercise~5]{B2}.
Applying Lemma~\ref{lemma-semi} to $X=D\times D$ and $\psi=g$, we
thus get
\[E_g(\mu)\leqslant\liminf_{k\to\infty}\,\|\mu_k\|^2_g<\infty,\]
where the latter inequality is valid by (\ref{M}). Hence,
$\mu\in\mathcal E^+_g(D)$. Combined with (\ref{solution1}), this
yields $\mu\in\mathcal E_g^{\xi}(A_1,1;D)$. As
$G_{f,g}(\mu)>-\infty$, the assertion that $\mu$ solves
Problem~\ref{prG} will therefore be established once we have shown
that
\begin{equation}\label{incl3}G_{g,f}(\mu)\leqslant\lim_{k\to\infty}\,G_{g,f}(\mu_k).\end{equation}

Since the kernel $g$ is perfect \cite[Theorem~4.11]{FZ}, the sequence $\{\mu_k\}_{k\in\mathbb N}$,
being strong Cauchy in $\mathcal E^+_g(D)$ and vaguely convergent to $\mu$, converges to the same limit
strongly in $\mathcal E_g^+(D)$, i.e.
\begin{equation}\label{conv-str}\lim_{k\to\infty}\,\|\mu_k-\mu\|_g=0.\end{equation}
Also note that the mapping $\nu\mapsto G_{g,f}(\nu)$ is vaguely
l.s.c., resp.\ strongly continuous, on $\mathcal
E_{g,f}(D)\cap\mathfrak M^+(A_1;D)$ if Case~I, resp.\ Case~II,
holds. In fact, since $\|\nu\|_g$ is vaguely l.s.c.\ on $\mathcal
E^+_g(D)$, the former assertion follows from Lemma~\ref{lemma-semi}.
As for the latter assertion, it is obvious by (\ref{est:bel}). This
observation enables us to obtain (\ref{incl3}) from (\ref{vag-conv})
and~(\ref{conv-str}).\end{proof}

\subsection{Proof of Theorem~\ref{th-unsuff}} In view of Theorem~\ref{th-suff}, it is enough to
establish the necessity part of the theorem. Assume that the
requirements of the latter part of the theorem are fulfilled. Since
Case II with $\zeta\geqslant0$ takes place, we get from (\ref{IIgg})
and~(\ref{IIg})
\begin{equation}\label{caseII}G_{g,f}(\nu)=\|\nu\|_g^2+2E_g(\zeta,\nu)\in[0,\infty)\text{ \ for all\ }
\nu\in\mathcal E^+_g(A_1;D).\end{equation}

Consider numbers $r_j>0$, $j\in\mathbb N$, such that
$r_j\uparrow\infty$ as $j\to\infty$, and write $B_{r_j}:=B(0,r_j)$,
$A_{1,r_j}:=A_1\cap B_{r_j}$. As $c_\alpha(B_{r_j})<\infty$ and
$c_\alpha(A_1)=\infty$, it follows from the subadditivity of
$c_\alpha(\cdot)$ on universally measurable sets
\cite[Lemma~2.3.5]{F1} that $c_\alpha(A_1\setminus B_{r_j})=\infty$.
For every $j\in\mathbb N$ there is therefore $\xi_j\in\mathcal
E_\alpha^+(A_1\setminus B_{r_j},1;\mathbb R^n)$ of compact support
$S_D^{\xi_j}$ such that
\begin{equation}\label{to0}\|\xi_j\|_\alpha\leqslant j^{-2}.\end{equation}
Clearly, the $r_j$ can be chosen successively so that $A_{1,r_j}\cup
S^{\xi_j}_D\subset A_{1,r_{j+1}}$. Any compact set $K\subset\mathbb
R^n$ is contained in a ball $B_{r_{j_0}}$ with $j_0$ large enough,
and hence $K$ has points in common with only finitely many
$S^{\xi_j}_D$. Therefore $\xi$ defined by the relation
\begin{equation}\label{xixi}\xi(\varphi):=\sum_{j\in\mathbb N}\,\xi_j(\varphi)\text{ \ for
any\ }\varphi\in C_0(\mathbb R^n)\end{equation} is a positive Radon
measure on $\mathbb R^n$ carried by $A_1$. Furthermore,
$\xi(A_1)=\infty$ and $\xi\in\mathcal E^+_\alpha(\mathbb R^n)$. To
prove the latter, note that $\eta_k:=\xi_1+\dots+\xi_k\in\mathcal
E^+_\alpha(\mathbb R^n)$ in view of (\ref{to0}) and the triangle
inequality in $\mathcal E_\alpha(\mathbb R^n)$. Also observe that
$\eta_k\to\xi$ vaguely in $\mathfrak M(\mathbb R^n)$ because for any
$\varphi\in C_0(\mathbb R^n)$ there is $k_0$ such that
$\xi(\varphi)=\eta_k(\varphi)$ for all $k\geqslant k_0$. As
$\|\eta_k\|_\alpha\leqslant M:= \sum_{j\in\mathbb N}\,j^{-2}<\infty$
for all $k\in\mathbb N$, Lemma~\ref{lemma-semi} with $X=\mathbb
R^n\times\mathbb R^n$ and $\psi=\kappa_\alpha$ yields
$\|\xi\|_\alpha\leqslant M$. It has thus been shown that $\xi$ given
by (\ref{xixi}) is an element of $\mathfrak C(A_1;\mathbb R^n)$ with
$\xi(A_1)=\infty$.

Each $\xi_j$ belongs to $\mathcal E^+_g(A_1,1;D)$ and moreover, by
(\ref{g-ineq}) and~(\ref{to0}),
\begin{equation}\label{estell}\|\xi_j\|_g\leqslant\|\xi_j\|_\alpha\leqslant j^{-2}.\end{equation}
Since Case II takes place, $\xi_j\in\mathcal E_{g,f}^{\xi}(A_1,1;D)$
for all $j\in\mathbb N$ by (\ref{caseII}). By the Cau\-chy--Schwarz
(Bunyakovski) inequality in the pre-Hilbert space $\mathcal
E_{g}(D)$,
\[0\leqslant G_{g,f}^{\xi}(A_1,1;D)\leqslant\lim_{j\to\infty}\,\bigl[\|\xi_j\|_g^2+
2E_g(\zeta,\xi_j)\bigr]\leqslant2\|\zeta\|_g\lim_{j\to\infty}\,\|\xi_j\|_g=0,\]
where the first and the second inequalities hold by (\ref{caseII}),
while the third inequality and the equality are valid by
(\ref{estell}). Hence, $G_{g,f}^{\xi}(A_1,1;D)=0$. As seen from
(\ref{caseII}), such infimum can be attained only at zero measure,
which is impossible because $0\notin\mathcal
E_{g,f}^{\xi}(A_1,1;D)$. Combined with Theorem~\ref{th:rel}, this
establishes the claimed assertion.

\subsection{Proof of Theorem~\ref{desc-pot}}
Fix $\lambda=\lambda^+-\lambda^-\in\mathcal E^\xi_{\alpha,f}(\mathbf
A,\mathbf 1;\mathbb R^n)$, and note that since $f=0$ n.e.\ on $A_2$,
relation (\ref{reprrr'}) can alternatively be rewritten as
$U_\alpha^\lambda=U_\alpha^{\lambda^+-\lambda^-}=0$ n.e.\ on $A_2$,
which in view of the $c_\alpha$-abs\-olute continuity of $\lambda$
and (\ref{bal-eq}) is equivalent to the equality
\begin{equation}\label{lbal}\lambda^-=(\lambda^+)'.\end{equation}
Taking Theorem~\ref{th:rel} into account, we thus see that, while proving the equivalence of assertions
(i) and (ii) of Theorem~\ref{desc-pot}, there is no loss of generality in assuming that the given measure
$\lambda$ satisfies (\ref{lbal}). By (\ref{hatg}) we therefore get
\[U_\alpha^{\lambda}=U_\alpha^{\lambda^+-(\lambda^+)'}=
\left\{
\begin{array}{ll} U_g^{\lambda^+} & \mbox{n.e.\ on\ } D,\\ 0 & \mbox{n.e.\ on\ }  A_2,\\ \end{array} \right.\]
and hence
\[W_{\alpha,f}^\lambda=\left\{
\begin{array}{ll} W_{g,f}^{\lambda^+} & \mbox{n.e.\ on\ } D,\\ 0 & \mbox{n.e.\ on\ } A_2,\\ \end{array}
\right.\]
where $W_{g,f}^{\lambda^+}:=W_{g,f|_D}^{\lambda^+}=U_g^{\lambda^+}+f|_D$, cf.\ (\ref{wp}).
If moreover Case~II holds, then
\[W_{\alpha,f}^{\lambda}=U_\alpha^{\lambda^++\zeta}-U_\alpha^{(\lambda^++\zeta)'}
\text{ \ n.e.\ on\ }\mathbb R^n.\] According to
\cite[Corollary~3.14]{FZ}, the function on the right (hence that on
the left) in this relation takes the value $0$ at every
$\alpha$-regular point of $A_2$, which establishes~(\ref{reprrr1'}).

Combined with Theorem~\ref{th:rel}, what has been shown just above
yields that Theorem~\ref{desc-pot} will be proved once we have
established the following theorem.

\begin{theorem}\label{desc-pot-g} Under the hypotheses of Theorem\/~{\rm\ref{desc-pot}} the following two
assertions are equivalent for any\/ $\mu\in\mathcal E^\xi_{g,f}(A_1,1;D)$:
\begin{itemize}
\item[{\rm(i$'$)}] $\mu$ is a solution to Problem\/ {\rm\ref{prG}}.
\item[{\rm(ii$'$)}] There exists a number\/ $c\in\mathbb R$ possessing the properties
\begin{align}\label{b1'}W^{\mu}_{g,f}&\geqslant c\quad(\xi-\mu)\text{-a.e.},\\
\label{b2'}W^{\mu}_{g,f}&\leqslant c\quad\mu\text{-a.e.}
\end{align}
\end{itemize}
\end{theorem}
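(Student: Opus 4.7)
The plan is to apply Lemma~\ref{lequiv}, which reduces the solvability of Problem~\ref{prG} to the variational inequality
\[\bigl\langle W^\mu_{g,f},\nu-\mu\bigr\rangle\geqslant0\text{ \ for every\ }\nu\in\mathcal E^\xi_{g,f}(A_1,1;D),\]
and to translate this into the pointwise bounds (\ref{b1'}) and (\ref{b2'}). Thus I would split the argument into the two implications.

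First, the easier direction (ii$'$)~$\Rightarrow$~(i$'$). For any $\nu\in\mathcal E^\xi_{g,f}(A_1,1;D)$ the hypotheses $\nu,\mu\leqslant\xi$ give the measure-theoretic inequalities $(\nu-\mu)^+\leqslant\xi-\mu$ and $(\nu-\mu)^-\leqslant\mu$. Consequently (\ref{b1'}) holds $(\nu-\mu)^+$-a.e.\ and (\ref{b2'}) holds $(\nu-\mu)^-$-a.e., so
\[\bigl\langle W^\mu_{g,f},\nu-\mu\bigr\rangle=\bigl\langle W^\mu_{g,f},(\nu-\mu)^+\bigr\rangle-\bigl\langle W^\mu_{g,f},(\nu-\mu)^-\bigr\rangle\geqslant c\bigl[(\nu-\mu)^+(A_1)-(\nu-\mu)^-(A_1)\bigr]=0,\]
the last equality because $\nu$ and $\mu$ both have total mass $1$. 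Lemma~\ref{lequiv} then gives (i$'$).

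For the converse (i$'$)~$\Rightarrow$~(ii$'$) I would use mass-swap perturbations. For compact sets $K_1,K_2\subset A_1^\circ$ on which $f$ is bounded, with $(\xi-\mu)(K_1)>0$ and $\mu(K_2)>0$, set $\alpha:=(\xi-\mu)|_{K_1}/(\xi-\mu)(K_1)$ and $\beta:=\mu|_{K_2}/\mu(K_2)$. For every sufficiently small $t>0$ the perturbation $\mu_t:=\mu+t(\alpha-\beta)$ satisfies $\mu_t(A_1)=1$, $0\leqslant\mu_t\leqslant\xi$, has finite $g$-energy (combining traces of $\mu$ and $\xi\in\mathcal E_g^+(D)$) and has $f$ integrable (since $f$ is bounded on $K_1\cup K_2$); hence $\mu_t\in\mathcal E^\xi_{g,f}(A_1,1;D)$. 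Applying Lemma~\ref{lequiv} with $\nu=\mu_t$ and dividing by $t$ yields
\[\int W^\mu_{g,f}\,d\alpha\geqslant\int W^\mu_{g,f}\,d\beta.\]
Now suppose for contradiction that there exist reals $c_1<c_2$ with $(\xi-\mu)\bigl(\{W^\mu_{g,f}<c_1\}\cap A_1^\circ\bigr)>0$ and $\mu\bigl(\{W^\mu_{g,f}>c_2\}\cap A_1^\circ\bigr)>0$. Inner regularity (applied inside the $F_\sigma$ set $A_1^\circ$, on which $f$ is countably exhausted by sets of bounded $f$) lets us choose the compact $K_1,K_2$ inside these level sets with $f$ bounded; the displayed inequality then becomes $c_1>\int W^\mu_{g,f}\,d\alpha\geqslant\int W^\mu_{g,f}\,d\beta>c_2$, a contradiction. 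Therefore $c_-:=\mathrm{ess\,sup}_\mu W^\mu_{g,f}\leqslant c_+:=\mathrm{ess\,inf}_{\xi-\mu}W^\mu_{g,f}$, and any $c\in[c_-,c_+]$ satisfies (\ref{b1'}) and (\ref{b2'}). Finiteness of $c$ follows because $W^\mu_{g,f}=U_g^\mu+f|_D\geqslant0+\inf_{A_1}f>-\infty$ (so $c_->-\infty$) and because $U_g^\mu\leqslant U_g^\xi$ is finite $\xi$-a.e.\ while hypothesis (\ref{acirc}) forces $f$ to be finite on a set of positive $(\xi-\mu)$-measure (so $c_+<+\infty$).

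The main obstacle will be the measure-theoretic verification in the converse direction: ensuring that the perturbation $\mu_t$ actually lies in $\mathcal E^\xi_{g,f}(A_1,1;D)$ (in particular that $f$ is $\mu_t$-integrable and $G_{g,f}(\mu_t)<\infty$), and that the level sets of $W^\mu_{g,f}$ intersect $A_1^\circ$ in enough mass to support the compact sets $K_1,K_2$ on which $f$ is bounded. Both points rely crucially on the standing hypotheses that $\xi(A_1^\circ)>1$, that $f$ is lower bounded on $A_1$, and on the $c_g$-absolute continuity coming from $\mu,\xi\in\mathcal E_g^+(D)$; the argument would proceed identically in Cases~I and~II, using only these common features and the fact that $f|_D$ is universally measurable in both cases.
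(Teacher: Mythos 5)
Your argument is correct and follows the paper's approach in essence: both directions hinge on Lemma~\ref{lequiv}, and the harder implication (i$'$)$\Rightarrow$(ii$'$) is obtained, exactly as in the paper, from a mass-swap perturbation between a high and a low level set of $W^\mu_{g,f}$ combined with the variational inequality. The minor differences --- your Hahn--Jordan decomposition of $\nu-\mu$ versus the paper's split over the level sets $A_1^\pm(c)$, and your one-parameter family $\mu_t$ versus the paper's one-shot perturbation $\theta=\mu-\mu|_{K_1}+b\tau$ --- are cosmetic and do not alter the substance of the proof.
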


\begin{proof} Throughout the proof we shall use permanently the fact that both $\xi$ and $\mu$ have
finite $\alpha$-Riesz energy, are hence they are
$c_\alpha$-absolutely continuous.

Suppose first that assertion (i$'$) holds. Inequality (\ref{b1'}) is
valid for $c=L$, where
\[L:=\sup\,\bigl\{q\in\mathbb R: \ W_{g,f}^\mu\geqslant q\quad(\xi-\mu)\mbox{-a.e.}\bigr\}.\] In turn,
(\ref{b1'}) with $c=L$ implies that $L<\infty$ because $W_{g,f}^{\mu}<\infty$ holds n.e.\ on $A_1^\circ$
and hence $(\xi-\mu)$-a.e.\ on $A_1^\circ$, while
$(\xi-\mu)(A_1^\circ)>0$ by (\ref{acirc}). Also note that $L>-\infty$, for $W_{g,f}^{\mu}$ is
lower bounded on $A_1$ by assumption.

We next proceed by establishing (\ref{b2'}) with $c=L$. To this end
write for any $w\in\mathbb R$
\[A_1^+(w):=\bigl\{x\in A_1:\ W_{g,f}^\mu(x)>w\bigr\}\quad\text{and}\quad
A_1^-(w):=\bigl\{x\in A_1:\ W_{g,f}^\mu(x)<w\bigr\}.\] Assume on the
contrary that (\ref{b2'}) with $c=L$ fails, i.e.\ $\mu(A_1^+(L))>0$.
Since $W_{g,f}^\mu$ is $\mu$-measurable, one can choose
$w'\in(L,\infty)$ so that $\mu(A_1^+(w'))>0$. At the same time, as
$w'>L$, (\ref{b1'}) with $c=L$  yields $(\xi-\mu)(A_1^-(w'))>0$.
Therefore, there exist compact sets $K_1\subset A_1^+(w')$ and
$K_2\subset A_1^-(w')$ such that
\begin{equation}\label{c01}0<\mu(K_1)<(\xi-\mu)(K_2).\end{equation}

Write $\tau:=(\xi-\mu)|_{K_2}$; then
$E_g(\tau)<E_\alpha(\tau)<\infty$. Since $\langle
W_{g,f}^{\mu},\tau\rangle\leqslant w'\tau(K_2)<\infty$, we thus get
$\langle f,\tau\rangle<\infty$. Define
$\theta:=\mu-\mu|_{K_1}+b\tau$, where
$b:=\mu(K_1)/\tau(K_2)\in(0,1)$ by (\ref{c01}). Straightforward
verification then shows that $\theta(A_1)=1$ and
$\theta\leqslant\xi$, and hence $\theta\in\mathcal
E^{\xi}_{g,f}(A_1,1;D)$. On the other hand,
\begin{align*}
\langle W_{g,f}^\mu,\theta-\mu\rangle&=\langle
W_{g,f}^\mu-w',\theta-\mu\rangle\\&{}=-\langle
W_{g,f}^\mu-w',\mu|_{K_1}\rangle+b\langle
W_{g,f}^\mu-w',\tau\rangle<0,\end{align*} which is impossible in
view of Lemma~\ref{lequiv} applied to $\lambda=\mu$ and
$\nu=\theta$. This contradiction establishes (\ref{b2'}), thus
completing the proof that (i$'$) implies~(ii$'$).

Conversely, let (ii$'$) hold. Then $\mu(A_1^+(c))=0$ and
$(\xi-\mu)(A_1^-(c))=0$. For any $\nu\in\mathcal
E^\xi_{g,f}(A_1,1;D)$ we therefore obtain
\begin{align*}\langle
W_{g,f}^\mu,\nu-\mu\rangle&=\langle
W_{g,f}^\mu-c,\nu-\mu\rangle\\
&{}=\bigl\langle W_{g,f}^\mu-c,\nu|_{A_1^+(c)}\bigr\rangle+\bigl\langle
W_{g,f}^\mu-c,(\nu-\xi)|_{A_1^-(c)}\bigr\rangle\geqslant0.\end{align*}
Application of Lemma~\ref{lequiv} shows that, indeed, $\mu$ is the solution to Problem~\ref{prG}.\end{proof}

\subsection{Proof of Theorem~\ref{desc-sup}} For any $x\in D$ let $K_x$ be the inverse of
$\mathrm{Cl}_{\overline{\mathbb R^n}}\,A_2$ relative to $S(x,1)$,
$\overline{\mathbb R^n}$ being the one-point compactification of
$\mathbb R^n$. Since $K_x$ is compact, there is the (unique)
$\kappa_\alpha$-equil\-ibrium measure $\gamma_x\in\mathcal
E^+_\alpha(K_x;\mathbb R^n)$ on $K_x$ with the properties
$\|\gamma_x\|^2_\alpha=\gamma_x(K_x)=c_\alpha(K_x)$,
\begin{equation}\label{Keq}U_\alpha^{\gamma_x}=1\text{ \ n.e.\ on\ }K_x,\end{equation}
and $U_\alpha^{\gamma_x}\leqslant1$ on $\mathbb R^n$. Note that
$\gamma_x\ne0$, for $c_\alpha(K_x)>0$ in consequence of
$c_\alpha(A_2)>0$, see \cite[Chapter~IV, Section~5,
n$^\circ$\,19]{L}. We assert that under the stated requirements
\begin{equation}\label{eq-desc}
S^{\gamma_x}_{\mathbb R^n}=\left\{
\begin{array}{lll} \breve{K}_x & \text{if} & \alpha<2,\\ \partial_{\mathbb R^n} K_x  &
\text{if} & \alpha=2.\\ \end{array} \right.
\end{equation}
The latter equality in (\ref{eq-desc}) follows from \cite[Chapter~II, Section~3, n$^\circ$\,13]{L}.
To establish the former equality,\footnote{We have brought here this proof, since we did not find a reference
for this possibly known assertion.} we first note that $S^{\gamma_x}_{\mathbb R^n}\subset\breve{K}_x$ by
the $c_\alpha$-absolute continuity of $\gamma_x$. As for the converse inclusion, assume on the contrary that
there is $x_0\in\breve{K}_x$ such that $x_0\notin S^{\gamma_x}_{\mathbb R^n}$. Choose $r>0$ with the
property  $\overline{B}(x_0,r)\cap S^{\gamma_x}_{\mathbb R^n}=\varnothing$. But
$c_\alpha\bigl(B(x_0,r)\cap\breve{K}_x\bigr)>0$, hence there is $y\in B(x_0,r)$ such that
$U_\alpha^{\gamma_x}(y)=1$. The function $U_\alpha^{\gamma_x}$ is $\alpha$-har\-monic on $B(x_0,r)$
\cite[Chapter~I, Section~5, n$^\circ$\,20]{L},
continuous on $\overline{B}(x_0,r)$, and takes at $y\in B(x_0,r)$ its maximum value $1$.
Applying \cite[Theorem~1.28]{L} we obtain $U_\alpha^{\gamma_x}=1$ $m_n$-a.e.\ on $\mathbb R^n$,
hence everywhere on $(\breve{K}_x)^c$ by the continuity of $U_\alpha^{\gamma_x}$ on
$\bigl(S^{\gamma_x}_{\mathbb R^n}\bigr)^c$ \ $\bigl[{}\supset(\breve{K}_x)^c\bigr]$, and altogether n.e.\
on $\mathbb R^n$ by (\ref{Keq}). This means that $\gamma_x$ serves as the $\alpha$-Riesz equilibrium measure
on the whole of $\mathbb R^n$, which is impossible.

Based on (\ref{reprrr}) and the integral representation
(\ref{int-repr}), we then arrive at (\ref{lemma-desc-riesz}) with
the aid of the fact that for every $x\in D$, $\varepsilon_x'$ is the
Kelvin transform of the $\kappa_\alpha$-equilibrium measure
$\gamma_x$, see \cite[Section~3.3]{FZ}.

\subsection{Proof of Theorem~\ref{zone}}\label{proof:zone} Since $\lambda^-=(\lambda^+)'$ by (\ref{reprrr})
and $f=0$ by assumption, the function
\[W_{\alpha,f}^{\lambda^\xi_{\mathbf A}}=U_\alpha^{\lambda^\xi_{\mathbf A}}=U_\alpha^{\lambda^+}-
U_\alpha^{(\lambda^+)'}\]
is well defined and finite n.e.\ on $\mathbb R^n$. In particular, it is well defined on all of $D$ and
it equals there the strictly positive function $U_g^{\lambda^+}$, see Lemma~\ref{l-hatg}. This together with
(\ref{reprrr1'}) proves (\ref{th}). Combining (\ref{th}) with  (\ref{b2}) shows that under the stated
assumptions the number $c$ from Theorem~\ref{desc-pot} is ${}>0$, while (\ref{b1}) now takes the (equivalent)
form
\begin{equation}\label{err}U_\alpha^{\lambda^\xi_{\mathbf A}}\geqslant c>0\quad (\xi-\lambda^+)
\text{-a.e.}\end{equation}
Having rewritten (\ref{b2}) as
\[U_\alpha^{\lambda^+}\leqslant U_\alpha^{\lambda^-}+c\quad\lambda^+\text{-a.e.},\]
we infer from \cite[Theorems~1.27, 1.29, 1.30]{L} that the same
inequality holds on all of $\mathbb R^n$, which amounts to
(\ref{b11}). In turn, (\ref{b11}) yields (\ref{b21}) when combined
with~(\ref{err}). It follows directly from Theorem~\ref{desc-pot}
that relations (\ref{b21}) and (\ref{b11}) together with
$U^{\lambda_{\mathbf A}^\xi}_\alpha=0$ n.e.\ on $D^c$ determine
uniquely the solution $\lambda_{\mathbf A}^\xi$ to Problem~\ref{prR}
within the class $\mathcal E^\xi_{\alpha,f}(\mathbf A,\mathbf
1;\mathbb R^n)$ of admissible measures.

Assume now that $U_\alpha^\xi$ is continuous on $D$. Then so is
$U_\alpha^{\lambda^+}$. Indeed, since $U_\alpha^{\lambda^+}$ is
l.s.c.\ and
$U_\alpha^{\lambda^+}=U_\alpha^\xi-U_\alpha^{\xi-\lambda^+}$ with
$U_\alpha^\xi$ continuous on $D$ and $U_\alpha^{\xi-\lambda^+}$
l.s.c., it follows that $U_\alpha^{\lambda^+}$ is also upper
semicontinuous, and hence continuous. Therefore, by the continuity
of $U_\alpha^{\lambda^+}$ on $D$, (\ref{b21}) implies (\ref{b21'}).
Thus, by (\ref{th}) and (\ref{b21'}),
\[U_g^{\lambda^+}=c\text{ \ on \ }S_D^{\xi-\lambda^+},\]
which implies (\ref{cg}) in view of \cite[Lemma 3.2.2]{F1} with $\kappa=g$.

Omitting now the requirement of the continuity of $U_\alpha^\xi$,
assume next that $\alpha<2$ and $m_n(D^c)>0$. If on the contrary
(\ref{pg}) fails, then there is $x_0\in S_D^\xi$ such that
$x_0\notin S_D^{\lambda^+}$. Thus one can choose $r>0$ so that
\begin{equation}\label{ball}\overline{B}(x_0,r)\subset D\quad\text{and}\quad
\overline{B}(x_0,r)\cap S^{\lambda^+}_D=\varnothing.\end{equation}
Then $(\xi-\lambda^+)\bigl(\overline{B}(x_0,r)\bigr)>0$, and hence
by (\ref{b21}) there exists $y\in\overline{B}(x_0,r)$ with the
property $U_\alpha^{\lambda_{\mathbf A}^\xi}(y)=c$, or equivalently
\begin{equation}\label{eq;eq}U_\alpha^{\lambda^+}(y)=U_\alpha^{\lambda^-}(y)+c.\end{equation}
As $U_\alpha^{\lambda^+}$ is $\alpha$-harmonic on $B(x_0,r)$ and
continuous on $\overline{B}(x_0,r)$, while $U_\alpha^{\lambda^-}+c$
is $\alpha$-super\-harmonic on $\mathbb R^n$, we obtain from
(\ref{b11}) and (\ref{eq;eq}) with the aid of \cite[Theorem~1.28]{L}
\begin{equation}\label{contr}U_\alpha^{\lambda^+}=U_\alpha^{\lambda^-}+c\quad
m_n\mbox{-a.e.\ on \ }\mathbb R^n.\end{equation} This implies $c=0$,
for
$U_\alpha^{\lambda^+}=U_\alpha^{(\lambda^+)'}=U_\alpha^{\lambda^-}$
holds n.e.\ on $D^c$, and hence $m_n$-a.e.\ on $D^c$. A
contradiction.

Similar arguments enable us to establish (\ref{supp}). Indeed, if
(\ref{supp}) fails at some $x_1\in D\setminus S_D^{\lambda^+}$, then
relation (\ref{eq;eq}) would be valid with $x_1$ in place of $y$,
see (\ref{b11}); and moreover one could choose $r>0$ so that
(\ref{ball}) would be fulfilled with $x_1$ in place of $x_0$.
Therefore, using the $\alpha$-har\-mon\-icity of
$U_\alpha^{\lambda^+}$ on $B(x_1,r)$ as well as the
$\alpha$-super\-harmonicity of $U_\alpha^{\lambda^-}+c$ on $\mathbb
R^n$, we would arrive again at (\ref{contr}), and hence at the
equality $c=0$. The contradiction thus obtained completes the proof
of the theorem.

\section{Duality relation between non-weighted constrained and weighted unconstrained minimum $\alpha$-Green
energy problems}\label{sec:dual}
As above, fix a (not necessarily proper) subset $A_1$ of $D$ which is relatively closed in $D$ and fix a
constraint $\xi\in\mathfrak C(A_1;\mathbb R^n)$, see (\ref{constr1}), with $1<\xi(A_1)<\infty$;
such $\xi$ exists because of the (permanent) assumption $c_\alpha(A_1)>0$. According to Theorem~\ref{exists},
the non-weighted ($f=0$) constrained minimum $\alpha$-Green energy problem over the class
$\mathcal E_g^\xi(A_1,1;D)$ is (uniquely) solvable, i.e.\ there exists
$\lambda=\lambda_{A_1}^\xi\in\mathcal E_g^\xi(A_1,1;D)$ with
\begin{equation}\label{eq:constr}\|\lambda\|_g^2=
\min_{\nu\in\mathcal E_g^\xi(A_1,1;D)}\,\|\nu\|^2_g.\end{equation}
Write $q:=[\xi(A_1)-1]^{-1}$ and
\[\theta:=q(\xi-\lambda),\quad f_0:=-qU^\xi_g.\]

\begin{theorem}\label{th:dual}Assume moreover that\/ $U_g^\xi$ is\/ {\rm(}finitely\/{\rm)} continuous
on\/ $D$. Then the measure\/ $\theta$ is a\/ {\rm(}unique\/{\rm)}\/ solution to the\/
$f_0$-weighted unconstrained minimum\/ $\alpha$-Green energy problem over\/ $\mathcal E_g^+(A_1,1;D)$,
i.e. $\theta\in\mathcal E_g^+(A_1,1;D)$ and
\begin{equation}\label{eq:dual}G_{g,f_0}(\theta)=\inf_{\nu\in\mathcal E_g^+(A_1,1;D)}\,G_{g,f_0}(\nu).
\end{equation}
Moreover, there exists\/ $\eta\in(0,\infty)$ such that
\begin{align}
\label{Wsc1}W_{g,f_0}^\theta&=-\eta\text{ \ on \ }S^\theta_D,\\
\label{Wsc2}W_{g,f_0}^\theta&\geqslant-\eta\text{ \ on \ }D,
\end{align}
and these two relations determine uniquely a solution to problem\/
{\rm(\ref{eq:dual})} among the measures of the class\/ $\mathcal
E_g^+(A_1,1;D)$.
\end{theorem}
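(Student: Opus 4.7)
The plan is to show that, under the continuity assumption on $U_g^\xi$, the measure $\theta$ satisfies the variational characterization of a minimizer for the unconstrained $f_0$-weighted $\alpha$-Green energy problem through the pair of conditions (\ref{Wsc1})--(\ref{Wsc2}), with the Lagrange multiplier $\eta$ inherited from the characterizing constant for $\lambda$. First I would verify the easy bookkeeping: $\theta\geq 0$ (since $\lambda\leq\xi$), $\theta(A_1)=q(\xi(A_1)-1)=1$, and $E_g(\theta)=q^2E_g(\xi-\lambda)<\infty$ because $\xi,\lambda\in\mathcal{E}_g^+(D)$; moreover, the identity $U_g^\theta=q(U_g^\xi-U_g^\lambda)$, which holds everywhere on $D$ since $U_g^\xi$ is finite there, gives
\[W_{g,f_0}^\theta=U_g^\theta+f_0=-qU_g^\lambda.\]

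Next I would apply Theorem~\ref{desc-pot-g} to $\lambda$ with external field $f=0$ (the hypothesis $\xi(A_1^\circ)>1$ is automatic), producing a constant $c\in\mathbb R$ such that $U_g^\lambda\geq c$ $(\xi-\lambda)$-a.e.\ and $U_g^\lambda\leq c$ $\lambda$-a.e. The continuity hypothesis is now crucial: writing $U_g^\lambda=U_g^\xi-U_g^{\xi-\lambda}$ (an equality of functions, since $U_g^\xi$ is finite) and noting that $U_g^\xi$ is continuous while $U_g^{\xi-\lambda}$ is lower semicontinuous, one gets that $U_g^\lambda$ is upper semicontinuous, hence continuous on $D$. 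Continuity then upgrades the two inequalities from ``a.e.'' to pointwise on the relevant supports: $U_g^\lambda\geq c$ on $S_D^\theta=S_D^{\xi-\lambda}$ and $U_g^\lambda\leq c$ on $S_D^\lambda$.

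To push the second inequality to all of $D$, I would invoke the domination principle (Theorem~\ref{th-dom-pr}) with $\mu:=\lambda\in\mathcal E_g^+(D)$, $\nu:=0$, and the positive $\alpha$-superharmonic constant $v:=c$ (positivity of $c$ following from $0<\|\lambda\|_g^2=\langle U_g^\lambda,\lambda\rangle\leq c$). This yields $U_g^\lambda\leq c$ on all of $D$, which combined with $U_g^\lambda\geq c$ on $S_D^\theta$ forces $U_g^\lambda=c$ on $S_D^\theta$. Setting $\eta:=qc\in(0,\infty)$ translates these into (\ref{Wsc1}) and (\ref{Wsc2}).

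Finally, to conclude (\ref{eq:dual}), I would use the identity
\[G_{g,f_0}(\mu)-G_{g,f_0}(\theta)=\|\mu-\theta\|_g^2+2\langle W_{g,f_0}^\theta,\mu-\theta\rangle,\qquad \mu\in\mathcal E_g^+(A_1,1;D),\]
and observe that $\langle W_{g,f_0}^\theta,\theta\rangle=-\eta$ by (\ref{Wsc1}), while $\langle W_{g,f_0}^\theta,\mu\rangle\geq-\eta$ by (\ref{Wsc2}) together with $\mu(A_1)=1$, so the right-hand side is nonnegative. Uniqueness, both of the minimizer and of any admissible measure satisfying (\ref{Wsc1})--(\ref{Wsc2}), follows from a parallelogram-law argument in the pre-Hilbert space $\mathcal E_g(D)$ exactly as in the proof of Lemma~\ref{l:uniq}. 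The main obstacle is propagating the constant $c$ from the support conditions to all of $D$; this is precisely where both the continuity of $U_g^\xi$ and the domination principle of Theorem~\ref{th-dom-pr} are essential.
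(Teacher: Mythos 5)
Your proof is correct and follows essentially the same route as the paper: both arguments apply Theorem~\ref{desc-pot-g} (with $f=0$) to the solution $\lambda$ of the constrained problem, obtain $c>0$, invoke the domination principle Theorem~\ref{th-dom-pr} with $v=c$ to promote $U_g^\lambda\leqslant c$ to all of $D$, upgrade the a.e.\ equality to a pointwise one via the continuity of $U_g^\lambda$ (inherited from that of $U_g^\xi$), and thereby arrive at (\ref{Wsc1})--(\ref{Wsc2}) with $\eta=qc$. The one place where you diverge is the final step: the paper derives (\ref{eq:dual}) from (\ref{Wsc1})--(\ref{Wsc2}) by citing \cite[Theorem~7.3]{ZPot2}, whereas you make that implication self-contained via the quadratic expansion $G_{g,f_0}(\mu)-G_{g,f_0}(\theta)=\|\mu-\theta\|_g^2+2\langle W_{g,f_0}^\theta,\mu-\theta\rangle$ and the normalization $\mu(A_1)=\theta(A_1)=1$. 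Your version is more elementary and keeps the argument local to the paper at no extra cost (the integrals involved are finite since $W_{g,f_0}^\theta=-qU_g^\lambda\in[-qc,0]$ is bounded once the domination step is done); the paper's citation is shorter but leans on an external result stated for a more general setting. Both correctly conclude the uniqueness claim from the convexity of $\mathcal E_g^+(A_1,1;D)$ and the strict convexity of the $g$-energy, as in Lemma~\ref{l:uniq}.
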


\begin{proof} Under the stated assumptions, relations (\ref{b1'}) and (\ref{b2'}) for the solution
$\lambda$ to the (non-weighted constrained) problem (\ref{eq:constr}) take the (equivalent) form
\begin{align}\label{pr1}U_g^\lambda&\geqslant c\text{ \ $(\xi-\lambda)$-a.e.,}\\
\label{pr2}U_g^\lambda&\leqslant c\text{ \ $\lambda$-a.e.}\end{align}
Thus $c>0$, see (\ref{pr2}). Applying Theorem~\ref{th-dom-pr} with $v=c$, from (\ref{pr2}) we
therefore obtain
\[U_g^\lambda\leqslant c\text{ \ on\ }D.\]
Combined with (\ref{pr1}), this gives $U_g^\lambda=c$
$(\xi-\lambda)$-a.e., and hence
\begin{equation*}U_g^\lambda=c\text{ \ on\ }S^{\xi-\lambda}_D,\end{equation*}
for $U_g^\lambda$ is (finitely) continuous on $D$ along with $U_g^\xi$. (Indeed, the continuity of
$U_g^\lambda$ follows in the same manner as in Section~\ref{proof:zone}, see the second paragraph,
with $g$ in place of~$\kappa_\alpha$.)

With the chosen notations the two preceding displays can
alternatively be rewritten as (\ref{Wsc1}) and (\ref{Wsc2}) with
$\eta:=qc$. In turn, (\ref{Wsc1}) and (\ref{Wsc2}) imply that
$\theta$, $f_0$ and $-\eta$ satisfy relations (7.9) and (7.10) in
\cite{ZPot2}, which according to \cite[Theorem~7.3]{ZPot2}
establishes (\ref{eq:dual}).\end{proof}

\section{Examples}

The purpose of the examples below is to illustrate the assertions from Section~\ref{form:main}.
Observe that both in Example~\ref{ex-2} and Example~\ref{ex-3} the set $A_2=D^c$ is not $\alpha$-thin at
infinity.

\begin{example}\label{ex-2}Let $n\geqslant3$, $0<\alpha<2$, $A_1=D=B(0,r)$, where $r\in(0,\infty)$, and
let $A_2=D^c$, $f=0$. Define $\xi:=q\lambda_r$, where $q>1$ and
$\lambda_r$ is the $\kappa_\alpha$-capacitary measure on
$\overline{B}(0,r)$, see Remark~\ref{remark}. As follows from
\cite[Chapter~II, Section~3, n$^\circ$\,13]{L}, $\xi\in\mathcal
E_\alpha^+(A_1,q;\mathbb R^n)$, $S_D^{\xi}=D$ and $U_\alpha^\xi$ is
continuous on $\mathbb R^n$. Since $f=0$, Problem~\ref{prR} reduces
to the problem of minimizing $E_\alpha(\mu)$ over the class of all
(signed Radon) measures $\mu\in\mathcal E_\alpha(\mathbf A,\mathbf
1;\mathbb R^n)$ with $\mu^+\leqslant\xi$, which by
Theorem~\ref{th:rel} is equivalent to the problem of minimizing
$E_{g^\alpha_D}(\nu)$ where $\nu$ ranges over $\mathcal
E^\xi_{g^\alpha_D}(A_1,1;D)$. According to Theorems~\ref{th:rel},
\ref{th-suff} and Lemma~\ref{l:uniq}, these two constrained minimum
energy problems are uniquely solvable (no short-circuit occurs) and
their solutions, denoted respectively by $\lambda_\mathbf
A^\xi=\lambda^+-\lambda^-$ and $\lambda^\xi_{A_1}$, are related to
each other as in (\ref{reprrr}). Furthermore, by
(\ref{lemma-desc-riesz}), (\ref{cg}) and (\ref{pg}) we obtain
\[S_D^{\lambda^+}=S_D^{\lambda^\xi_{A_1}}=S_D^\xi=D,\quad S_{\mathbb R^n}^{\lambda^-}=D^c,\]
\begin{equation}\label{hryu}c_{g_D^\alpha}\bigl(S_D^{\xi-\lambda^+}\bigr)<\infty,\end{equation}
and finally by (\ref{reprrr1'}) and (\ref{b21'}) we have
\begin{equation}\label{det}U_\alpha^{\lambda_\mathbf A^\xi}=\left\{
\begin{array}{lll} c & \text{on} & S^{\xi-\lambda^+}_D,\\ 0  & \text{on} & D^c,\\
\end{array} \right.\end{equation}
where $c>0$, while by (\ref{b11})
\begin{equation}\label{ex1}U_\alpha^{\lambda_\mathbf A^\xi}\leqslant c\text{ \ on \ }
D\setminus S^{\xi-\lambda^+}_D.\end{equation}
Moreover, according to Theorem~\ref{desc-pot} relations (\ref{det}) and (\ref{ex1}) determine uniquely
the solution $\lambda_\mathbf A^\xi$ among the class of admissible measures.
\end{example}

\begin{example}\label{ex-3}Let $n=3$, $\alpha=2$, $f=0$ and let
$D:=\bigl\{x=(x_1,x_2,x_3)\in\mathbb R^3: \  x_1>0\bigr\}$. Define $A_1$ as the union of $K_k$ over
$k\in\mathbb N$,
where
\[K_k:=\Bigl\{(x_1,x_2,x_3)\in\mathbb R^3: \ x_1=\frac1k, \ x_2^2+x_3^2\leqslant k^2\Bigr\},
\quad k\in\mathbb N.\]
Let $\lambda_k$ be the $\kappa_2$-capacitary measure on $K_k$, see Remark~\ref{remark};
hence $\lambda_k(K_k)=1$ and $\|\lambda_k\|^2_2=\pi^2/(2k)$ by \cite[Chapter~II, Section~3, n$^\circ$\,14]{L}.
Define
\[\xi:=\sum_{k\in\mathbb N}\,\frac{\lambda_k}{k^2}.\]
In the same manner as in the proof of Theorem~\ref{th-unsuff} one
can see that $\xi$ is a bounded positive Radon measure carried by
$A_1$ with $E_2(\xi)<\infty$. Therefore it follows from
Theorem~\ref{th-suff} that Problem~\ref{prR} for the constraint
$\xi$ and the generalized condenser $\mathbf A=(A_1,D^c)$ has a
solution $\lambda_\mathbf A^\xi$ (no short-circ\-uit occurs),
although $D^c\cap\mathrm{Cl}_{\mathbb R^3}A_1=\partial D=\{x_1=0\}$
and hence
\[c_2\bigl(D^c\cap\mathrm{Cl}_{\mathbb R^3}A_1\bigr)=\infty.\]
Furthermore, since each $U_2^{\lambda_k}$, $k\in\mathbb N$, is
continuous on $\mathbb R^n$ and bounded from above by $\pi^2/(2k)$,
the potential $U_2^\xi$ is continuous on $\mathbb R^n$ by uniform
convergence of the sequence $\sum_{k\in\mathbb
N}\,k^{-2}U_2^{\lambda_k}$. Hence, (\ref{hryu}), (\ref{det}) and
(\ref{ex1}) also hold in the present case with $\alpha=2$, again
with $c>0$, and relations (\ref{det}) and (\ref{ex1}) determine
uniquely the solution $\lambda_\mathbf A^\xi$ within the class of
admissible measures. Also note that $S^{\lambda^-}_{\mathbb
R^n}=\partial D$ according to~(\ref{lemma-desc-riesz}).
\end{example}

\section{Appendix}\label{app}

The following example shows that even for positive bounded (hence extendible) measures on an open ball
in $\mathbb R^3$ the finiteness of the $\alpha$-Green energy does not necessarily imply the finiteness
of the $\alpha$-Riesz energy, contrary to what was stated in \cite[Lemma~2.4]{DFHSZ}.

\begin{example}\label{counterex} Let $\alpha=2$. For technical simplicity we first construct the
analogous example with the ball replaced by the half-space
$D=\{(x_1,x_2,x_3)\in\mathbb R^3:\ x_1>0\}$ (next we apply a Kelvin
transformation). The boundary $\partial D$ (replacing the sphere) is
then the plane $\{x_1=0\}$. For $r>0$ let $\mu_r$ denote the
$\kappa_2$-capacitary measure on the closed $2$-dim\-ens\-ion\-al
disc $K_r\subset\partial D$ of radius $r$ centered at $(0,0,0)$, see
Remark~\ref{remark}. Such $\mu_r$ exists since $0<c_2(K_r)<\infty$
(in fact $c_2(K_r)=2r/\pi^2$, see \cite[Chapter~II, Section~3,
n$^\circ$\,14]{L}). The Newtonian energy $E_2(\mu_r)$ equals
$E_2(\mu_1)/r$, where $0<E_2(\mu_1)=1/c_2(K_1)<\infty$. For real
numbers $z_1$ and $z_2$ and a measure $\nu\in\mathfrak M^+(\partial
D;\mathbb R^3)$ denote by $\nu^{z_1,z_2}$ the translation of $\nu$
in $\mathbb R^3$ by the vector $(0,z_1,z_2)$. Then $\mu_r^{z_1,z_2}$
is the $\kappa_2$-capacitary measure on the translation of the disk
$K_r$ by the vector $(0,z_1,z_2)$, denoted by $K_r^{z_1,z_2}$.

For fixed $r>0$ the potential $U_2^{\mu_r}$ on $\mathbb R^3$ equals
$1$ on the disc $K_r$ by the Wiener criterion. By the continuity
principle \cite[Theorem~1.7]{L}, $U_2^{\mu_r}$ is (finitely)
continuous on $\mathbb R^3$, and even uniformly since
$U_2^{\mu_r}(x)\to0$ uniformly as $|x|\to\infty$, $S_{\mathbb
R^3}^{\mu_r}$ being compact (actually, $S_{\mathbb
R^3}^{\mu_r}=K_r$).

For any positive measure $\nu$ on $\mathbb R^3$ we denote by
$(\nu)\,\check{}$ the image of $\nu$ under the reflection
$(x_1,x_2,x_3)\mapsto(-x_1,x_2,x_3)$ with respect to $\partial D$.
The $2$-Green kernel $g=g^2_D$ on the half-spa\-ce $D$ is given by
\[g(x,y)=U_2^{\varepsilon_y}(x)-U_2^{(\varepsilon_y)\,\breve{}}(x)\text{ \ for all\ }x,y\in D,\]
see e.g.\ \cite[Theorem~4.1.6]{AG}, and we therefore obtain
\begin{align}
\label{to00}E_g(\mu_r^{\varepsilon,0})
  &=\int U_g^{\mu_r^{\varepsilon,0}}\,d\mu_r^{\varepsilon,0}=\int\Bigl(\,U_2^{\mu_r^{\varepsilon,0}}-
  U_2^{(\mu_r^{\varepsilon,0})\,\breve{}}\,\Bigr)\,d\mu_r^{\varepsilon,0}\\
{}&=\int U_2^{\mu_r}\,d\mu_r-\int U_2^{\mu_r}(-2\varepsilon,x_2,x_3)\,d\mu_r(x_1,x_2,x_3)\to0\notag
\end{align}
as $\varepsilon\downarrow0$, noting that
$U_2^{\mu_r}(-2\varepsilon,x_2,x_3)\to U_2^{\mu_r}(0,x_2,x_3)$
uniformly with respect to $(0,x_2,x_3)\in K_r$ as
$\varepsilon\downarrow0$.

Consider decreasing sequences $\{c_k\}_{k\in\mathbb N}$ and $\{r_k\}_{k\in\mathbb N}$ of the numbers
$c_k=2^{-k}$ and $r_k=2^{-2k}$. Then $c_k^2/r_k=1$, hence
\begin{equation}\label{exx}\sum_{k\in\mathbb N}\,c_k=1\text{ \  and \ }
\sum_{k\in\mathbb N}\,c_k^2/r_k=\infty.\end{equation} For
$k\in\mathbb N$ choose $0<\varepsilon_k<1$ small enough so that
\begin{equation}\label{small}\bigl\|\mu_{r_k}^{\varepsilon_k,k}\bigr\|_g=
\bigl\|\mu_{r_k}^{\varepsilon_k,0}\bigr\|_g<1,\end{equation}
which is possible in view of (\ref{to00}). Now define the functional
\[\mu(\varphi):=\sum_{k\in\mathbb N}\,c_k\mu_{r_k}^{\varepsilon_k,k}(\varphi)
\text{ \ for all\ }\varphi\in C_0(D).\] Since any compact subset of
$D$ has points in common with only finitely many (disjoint) disks
$K_{r_k}^{\varepsilon_k,k}$, $\mu$ thus defined is a positive Radon
measure on $D$ with $\mu(D)=1$, see the former equality in
(\ref{exx}). Furthermore, the partial sums
\[\eta_\ell:=\sum_{k=1}^{\ell}\,c_k\mu_{r_k}^{\varepsilon_k,k},\text{ \ where\ }\ell\in\mathbb N,\]
belong to $\mathcal E^+_g(D)$ with $\|\eta_\ell\|_g<1$, the latter
being clear from (\ref{small}) and the former equality in
(\ref{exx}) in view of the triangle inequality in $\mathcal E_g(D)$.
Since $\eta_\ell\to\mu$ vaguely in $\mathfrak M^+(D)$, hence
$\eta_\ell\otimes\eta_\ell\to\mu\otimes\mu$ vaguely in $\mathfrak
M^+(D\times D)$ \cite[Chapter~III, Section~5, Exercise~5]{B2}, we
obtain $\|\mu\|_g\leqslant 1$ from Lemma~\ref{lemma-semi} with
$X=D\times D$ and $\psi=g$.

On the other hand, being bounded, $\mu$ is extendible to a positive Radon measure on $\mathbb R^3$ and
\[E_2(\mu)\geqslant\sum_{k\in\mathbb N}\,E_2\bigl(c_k\mu_{r_k}^{\varepsilon_k,k}\bigr)=
\sum_{k\in\mathbb N}\,c_k^2E_2(\mu_{r_k})=\sum_{k\in\mathbb
N}\,c_k^2r_k^{-1}E_2(\mu_1)=\infty,\] where the last equality
follows from the latter equality in (\ref{exx}). This verifies
Example~\ref{counterex} for a half-space.

For treating the ball, apply the inversion relative to the sphere
with center $(2,0,0)$ and radius $2$. It maps the above half-space
$D$ on the ball $D^*$ centered at $(1,0,0)$ and with radius $1$. The
above measure $\mu$ has bounded Newtonian potential $U_2^\mu$ at the
point $(2,0,0)$
 because $\mu$ is bounded and supported by the closed strip $\{0\leqslant x_1\leqslant1\}$
 not containing $(2,0,0)$. Therefore, the Kelvin transform $\mu^*$ of $\mu$ is a bounded measure,
 see \cite[Eq.~(4.5.3)]{L}, and can be written in the form
\[\mu^*=\sum_{k\in\mathbb N}\,c_k\bigl(\mu_{r_k}^{\varepsilon_k,k}\bigr)^*,\]
the Kelvin transformation of positive measures being clearly
countably additive. Since $\kappa_2$-energy is preserved by Kelvin
transformation, so is $g_D^2$-energy of the measure
$\mu_{r_k}^{\varepsilon_k,k}\in\mathcal E_2^+(D)$, as seen by
combining \cite[Eqs.~(4.5.2), (4.5.4)]{L} and (\ref{eq1-2-hen})
above. Denoting by $g^*$ the Green kernel for the above ball $D^*$
we therefore obtain by (\ref{small})
\[\|\mu^*\|_{g^*}\leqslant
\sum_{k\in\mathbb
N}\,c_k\bigl\|\bigl(\mu_{r_k}^{\varepsilon_k,k}\bigr)^*\bigr\|_{g^*}=
\sum_{k\in\mathbb
N}\,c_k\bigl\|\mu_{r_k}^{\varepsilon_k,k}\bigr\|_g\leqslant 1.\] And
clearly $E_2(\mu^*)=E_2(\mu)=\infty$. This verifies
Example~\ref{counterex} also for a ball.
\end{example}

{\bf Acknowledgement.} The authors express their sincere gratitude
to the anonymous referees for valuable suggestions, helping us in
improving the exposition of the paper.

\end{document}